\title[Polynomial Blow-Up Upper Bounds]{Polynomial Blow-up Upper Bounds for\\  the Einstein-scalar field System\\ Under Spherical Symmetry}
\date{\today}
\author{Xinliang An}
\address{Department of Mathematics, National University of Singapore,
10 Lower Kent Ridge Road, Singapore, 119076}
\email{matax@nus.edu.sg}
\author{Ruixiang Zhang}
\address{Department of Mathematics, University of Wisconsin-Madison,
480 Lincoln Drive, Madison, WI, USA, 53706}
\email{ruixiang@math.wisc.edu}
\theoremstyle{definition}
\newtheorem{lemma}{Lemma}[section]
\newtheorem{proposition}[lemma]{Proposition}
\newtheorem{theorem}[lemma]{Theorem}
\newtheorem{remark}{Remark}
\numberwithin{equation}{section}
\begin{document}

\newcommand{\ub}{\underline{u}}
\newcommand{\Cb}{\underline{C}}
\newcommand{\Lb}{\underline{L}}
\newcommand{\Lh}{\hat{L}}
\newcommand{\Lbh}{\hat{\Lb}}
\newcommand{\phib}{\underline{\phi}}
\newcommand{\Phib}{\underline{\Phi}}
\newcommand{\Db}{\underline{D}}
\newcommand{\Dh}{\hat{D}}
\newcommand{\Dbh}{\hat{\Db}}
\newcommand{\omb}{\underline{\omega}}
\newcommand{\omh}{\hat{\omega}}
\newcommand{\ombh}{\hat{\omb}}
\newcommand{\Pb}{\underline{P}}
\newcommand{\chib}{\underline{\chi}}
\newcommand{\chih}{\hat{\chi}}
\newcommand{\chibh}{\hat{\chib}}

\newcommand{\alb}{\underline{\alpha}}
\newcommand{\zeb}{\underline{\zeta}}
\newcommand{\beb}{\underline{\beta}}
\newcommand{\etb}{\underline{\eta}}
\newcommand{\Mb}{\underline{M}}
\newcommand{\oth}{\hat{\otimes}}

\def\a {\alpha}
\def\b {\beta}
\def\ab {\alphab}
\def\bb {\betab}
\def\nab {\nabla}

\def\ub {\underline{u}}
\def\th {\theta}
\def\Lb {\underline{L}}
\def\Hb {\underline{H}}
\def\chib {\underline{\chi}}
\def\chih {\hat{\chi}}
\def\chibh {\hat{\underline{\chi}}}
\def\omegab {\underline{\omega}}
\def\etab {\underline{\eta}}
\def\betab {\underline{\beta}}
\def\alphab {\underline{\alpha}}
\def\Psib {\underline{\Psi}}
\def\hot{\widehat{\otimes}}
\def\Phib {\underline{\Phi}}
\def\thb {\underline{\theta}}
\def\t {\tilde}
\def\st {\tilde{s}}

\def\rhoc{\check{\rho}}
\def\sigmac{\check{\sigma}}
\def\Psic{\check{\Psi}}
\def\kappab{\underline{\kappa}}
\def\betabc {\check{\underline{\beta}}}

\def\d {\delta}
\def\f {\frac}
\def\i {\infty}
\def\l {\bigg(}
\def\r {\bigg)}
\def\S {S_{u,\underline{u}}}
\def\o{\omega}
\def\O{\Omega}\
\def\be{\begin{equation}\begin{split}}
\def\en{\end{split}\end{equation}}
\def\at{a^{\frac{1}{2}}}
\def\af{a^{\frac{1}{4}}}
\def\od{\omega^{\dagger}}
\def\ombd{\underline{\omega}^{\dagger}}
\def\K{K-\frac{1}{|u|^2}}
\def\ut{\frac{1}{|u|^2}}
\def\s{\frac{\delta a^{\frac{1}{2}}}{|u|}}
\def\Kb{K-\frac{1}{(u+\underline{u})^2}}
\def\bf{b^{\frac{1}{4}}}
\def\bt{b^{\frac{1}{2}}}
\def\de{\delta}
\def\ls{\lesssim}
\def\om{\omega}
\def\Om{\Omega}

\newcommand{\e}{\epsilon}
\newcommand{\et} {\frac{\epsilon}{2}}
\newcommand{\ef} {\frac{\epsilon}{4}}
\newcommand{\LH} {L^2(H_u)}
\newcommand{\LHb} {L^2(\underline{H}_{\underline{u}})}
\newcommand{\M} {\mathcal}
\newcommand{\TM} {\tilde{\mathcal}}
\newcommand{\p}{\psi\hspace{1pt}}
\newcommand{\q}{\underline{\psi}\hspace{1pt}}
\newcommand{\Li}{_{L^{\infty}(S_{u,\underline{u}})}}
\newcommand{\Lt}{_{L^{2}(S)}}
\newcommand{\da}{\delta^{-\frac{\epsilon}{2}}}
\newcommand{\db}{\delta^{1-\frac{\epsilon}{2}}}
\newcommand{\D}{\Delta}


\renewcommand{\div}{\mbox{div }}
\newcommand{\curl}{\mbox{curl }}
\newcommand{\trchb}{\mbox{tr} \chib}
\def\trch{\mbox{tr}\chi}
\newcommand{\tr}{\mbox{tr}}

\newcommand{\Ls}{{\mathcal L} \mkern-10mu /\,}
\newcommand{\eps}{{\epsilon} \mkern-8mu /\,}

\newcommand{\xib}{\underline{\xi}}
\newcommand{\psib}{\underline{\psi}}
\newcommand{\rhob}{\underline{\rho}}
\newcommand{\thetab}{\underline{\theta}}
\newcommand{\gammab}{\underline{\gamma}}
\newcommand{\nub}{\underline{\nu}}
\newcommand{\lb}{\underline{l}}
\newcommand{\mub}{\underline{\mu}}
\newcommand{\Xib}{\underline{\Xi}}
\newcommand{\Thetab}{\underline{\Theta}}
\newcommand{\Lambdab}{\underline{\Lambda}}
\newcommand{\vphb}{\underline{\varphi}}

\newcommand{\ih}{\hat{i}}

\newcommand{\tcL}{\widetilde{\mathscr{L}}}

\newcommand{\sRic}{Ric\mkern-19mu /\,\,\,\,}
\newcommand{\sL}{{\cal L}\mkern-10mu /}
\newcommand{\sLh}{\hat{\sL}}
\newcommand{\sg}{g\mkern-9mu /}
\newcommand{\seps}{\epsilon\mkern-8mu /}
\newcommand{\sd}{d\mkern-10mu /}
\newcommand{\sR}{R\mkern-10mu /}
\newcommand{\snab}{\nabla\mkern-13mu /}
\newcommand{\sdiv}{\mbox{div}\mkern-19mu /\,\,\,\,}
\newcommand{\scurl}{\mbox{curl}\mkern-19mu /\,\,\,\,}
\newcommand{\slap}{\mbox{$\triangle  \mkern-13mu / \,$}}
\newcommand{\sGamma}{\Gamma\mkern-10mu /}
\newcommand{\somega}{\omega\mkern-10mu /}
\newcommand{\somb}{\omb\mkern-10mu /}
\newcommand{\spi}{\pi\mkern-10mu /}
\newcommand{\sJ}{J\mkern-10mu /}
\renewcommand{\sp}{p\mkern-9mu /}
\newcommand{\su}{u\mkern-8mu /}

\begin{abstract}
For general gravitational collapse, inside the black-hole region, singularities $(r=0)$ may arise. In this article, we aim to answer how strong these singularities could be. {\color{black} We analyse the behaviours of various geometric quantities. In particular, we show that in the most singular scenario, the Kretschmann scalar obeys polynomial blow-up upper bounds $O(1/r^N)$.} This improves previously best-known double-exponential upper bounds $O\big(\exp\exp(1/r)\big)$. Our result is sharp in the sense that there are known examples showing that no sub-polynomial upper bound could hold. Finally we do a case study on perturbations of the Schwarzschild solution.
\end{abstract}

\maketitle

\section{Introduction}
\subsection{Motivation}
In \cite{Chr.1}, Christodoulou studied the dynamical evolution of Einstein-scalar field system:
\begin{equation}\label{ES}
\begin{split}
&\mbox{Ric}_{\mu\nu}-\f12Rg_{\mu\nu}=2T_{\mu\nu},\\
&T_{\mu\nu}=\partial_{\mu}\phi \partial_{\nu}\phi-\f12g_{\mu\nu}\partial^{\sigma}\phi \partial_{\sigma}\phi.
\end{split}
\end{equation}
{\color{black}Since $\nab^{\mu}(\mbox{Ric}_{\mu\nu}-\f12Rg_{\mu\nu})=0$, the scalar field satisfies $\Box_g \phi=0$.}

Under spherical symmetry, Christodoulou first established a sharp trapped surface\footnote{A trapped surface is a two-dimensional sphere, with both incoming and outgoing null expansions negative.
} formation criterion. {\color{black}Consider the characteristic initial value problem for (\ref{ES}) in the rectangle region of a Penrose diagram blow:}

\begin{minipage}[!t]{0.4\textwidth}
	\begin{tikzpicture}[scale=0.9]
	\node[] at (1.25,3.25) {\LARGE $\mathcal{D}$};
	\begin{scope}[thick]
	\draw[->] (0,0) node[anchor=north]{$(u_0,0)$} -- (0,5)node[anchor = east]{$\Gamma$};
	\draw[->] (0,0) --node[anchor = north]{$v$} (3,3);
	\draw[->] (1.75,1.75) node[anchor=west]{$(u_0,v_1)$} --node[anchor=north]{$u$} (0,3.5)node[anchor = east]{$(0,v_1)$};
	\draw[->] (2.75,2.75) node[anchor=west]{$(u_0,v_2)$} -- (1,4.5);
	\end{scope}
	\begin{scope}[gray]
	\draw (2,2) -- (0.25,3.75);
	\draw (2.25,2.25) -- (0.5,4);
	\draw (2.5,2.5) -- (0.75,4.25);
	\draw(1.5,2) -- (2.5,3);
	\draw(1.25,2.25) -- (2.25,3.25);
	\draw(1,2.5) -- (2,3.5);
	\draw(0.75,2.75) -- (1.75,3.75);
	\draw(0.5,3) -- (1.5,4);
	\draw(0.25,3.25) -- (1.25,4.25);
	\draw(0,3.5) -- (1,4.5);
	\end{scope}
	\end{tikzpicture}
\end{minipage}
\begin{minipage}[!t]{0.58\textwidth}
We use a double-null foliation. Here $u$ and $v$ are optical functions:  $u=\mbox{{\color{black}constant}}$ stands for the outgoing null hypersurface; $v=\mbox{{\color{black}constant}}$ stands for the incoming null hypersurface. \\

\noindent Under spherical symmetry, axial $\Gamma$ is the center (invariant under $SO(3)$). Initial data are prescribed along outgoing cone $u=u_0$ and incoming cone $v=v_1$.
\end{minipage}
\hspace{0.05\textwidth}	

\noindent Under the above assumption, we have the following ansatz for the metric of the $3+1$-dimensional spacetime: 
\begin{equation}\label{metric}
g_{\mu\nu}dx^{\mu}dx^{\nu}=-\O^2(u,v)dudv+r^2(u,v)\big(d\theta^2+\sin^2\theta d\phi^2\big).
\end{equation}
\noindent Each point $(u,v)$ in above diagram stands for a $2$-sphere $S_{u,v}$. We define its Hawking mass as 
\begin{equation}\label{Hawking mass}
m(u,v)=\f{r}{2}(1+4\O^{-2}\partial_u r \partial_v r).
\end{equation}
{\color{black}\noindent For initial mass input along $u=u_0$, we define
$$\eta_0:=\f{m(u_0, v_2)-m(u_0, v_1)}{r(u_0, v_2)}, \mbox{ and denote } \d_0:=\f{r(u_0, v_2)-r(u_0, v_1)}{r(u_0, v_2)}.$$

\begin{theorem}\label{Christodoulou theorem spherical}
	Let
	\begin{align*}
	E(x):=\frac{x}{(1+x)^2}\bigg[\ln\bigg(\frac{1}{2x}\bigg)+5-x\bigg].
	\end{align*}
	We prescribe characteristic initial data along $u=u_0$ and $v=v_1$ for solving (\ref{ES}). For initial mass input along $u=u_0$,  suppose that the following lower bound holds for $\eta_0$:
	\begin{align*}
	\eta_0>E(\delta_0)
	\end{align*}
		Then there exist a trapped surface i.e. $\partial_vr< 0$ in $\mathcal{D}$.
		\end{theorem}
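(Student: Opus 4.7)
The plan is to work in the double-null coordinates of (\ref{metric}), setting $\lambda := \partial_v r$, $\nu := \partial_u r$, and $\mu := 2m/r$. The Hawking mass definition (\ref{Hawking mass}) rewrites as $1-\mu = -4\Omega^{-2}\lambda\nu$, so (with $\nu<0$ throughout $\mathcal{D}$) a trapped sphere is precisely one with $\lambda<0$, equivalently $\mu>1$. Restricting (\ref{ES}) to spherical symmetry yields the Raychaudhuri equations
\begin{equation*}
\partial_v(\Omega^{-2}\lambda) = -r\Omega^{-2}(\partial_v\phi)^2, \qquad \partial_u(\Omega^{-2}\nu) = -r\Omega^{-2}(\partial_u\phi)^2,
\end{equation*}
a wave equation for $r$, the linear wave $\partial_u\partial_v(r\phi)=0$, an equation for $\Omega$, and the mass evolution law $\partial_v m = \frac{r^2}{2\lambda}(1-\mu)(\partial_v\phi)^2$ together with its $u$-analogue. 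In any region free of trapped surfaces, this mass equation gives monotonicity of $m$ along outgoing null rays.

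Next, the plan is to argue by contradiction: assume $\mathcal{D}$ contains no trapped surface, so $\lambda>0$ and $\mu<1$ everywhere. The hypothesis $\eta_0 > E(\delta_0)$ quantifies the amount of mass packed into the shell $v\in[v_1,v_2]$ on the initial outgoing cone $u=u_0$. Along the incoming ray $v=v_2$, mass monotonicity transports this input inward toward the axis, while the Raychaudhuri equation controls the shrinking of $r$. Changing variable from $u$ to $r$ along $v=v_2$ (using $dr/du = \nu < 0$) and invoking $1-\mu = -4\Omega^{-2}\lambda\nu$, one can derive an ODE for $\mu$ as a function of $r$ whose coefficient degenerates like $1/(1-\mu)$ as $\mu\uparrow 1$. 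Integrating this ODE from $r(u_0,v_2)$ inward should then show that $\mu$ must reach $1$ before $r$ has decreased by too much, producing a trapped surface and contradicting the no-trapped-surface assumption.

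The main obstacle is deriving the sharp threshold $E(\delta_0)$. The logarithm $\ln(1/(2\delta_0))$ should originate from integrating the degenerate $1/(1-\mu)$ term over a range of $r$ whose size is controlled by $\delta_0\, r(u_0,v_2)$; the prefactor $\delta_0/(1+\delta_0)^2$ should arise from expressing $r(u_0,v_1) = (1-\delta_0)\,r(u_0,v_2)$ and tracking ratios of radii through the integration; the constants $5$ and $-\delta_0$ should come from careful estimates of the remaining boundary terms, particularly the interaction of the redshift factor $\Omega^{-2}\lambda$ with the Hawking mass. Keeping every constant sharp — avoiding loss to crude inequalities, and properly coupling the evolution of $\mu$ with that of $\Omega^{-2}\lambda$ along the characteristic — is where the substance of the argument lies.
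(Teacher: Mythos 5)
First, a point of comparison: this paper does not actually prove Theorem \ref{Christodoulou theorem spherical}. It is quoted from Christodoulou \cite{Chr.1}, and the authors explicitly state that a reproof in double-null gauge is deferred to the forthcoming work \cite{An-Lim}. So there is no in-paper argument to measure your proposal against; it has to be judged as a reconstruction of Christodoulou's original proof.

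Judged that way, your framework is right (double-null gauge, the two Raychaudhuri equations, $1-\mu=-4\Omega^{-2}\lambda\nu$ so that trapped is equivalent to $\mu>1$, argument by contradiction), but the central step would fail as described. There is no closed ODE for $\mu$ as a function of $r$ along the single ray $v=v_2$: from (\ref{m u}) one has $\partial_u m=\frac{(1-\mu)r^2(\partial_u\phi)^2}{2\partial_u r}$, so $\frac{d\mu}{dr}=\frac{(1-\mu)r(\partial_u\phi)^2}{(\partial_u r)^2}-\frac{\mu}{r}$, and the first term involves $(\partial_u\phi)^2$, which is genuinely free data and is not a function of $\mu$ and $r$. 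Since that term is nonnegative in the untrapped region, the only closed information it yields is $\partial_u m\le 0$, i.e.\ mass is \emph{radiated outward across} $v=v_2$ as the shell falls in; your phrase ``mass monotonicity transports this input inward'' points in the wrong direction, and bounding precisely this loss is the heart of the matter. The actual argument is a two-cone argument: one tracks $r_i(u)=r(u,v_i)$ and $m_i(u)=m(u,v_i)$ for $i=1,2$ simultaneously, exploits the monotonicity of $\frac{\lambda}{1-\mu}=\frac{\Omega^2}{-4\partial_u r}$ in $u$ (a consequence of (\ref{eqn u}), the same quantity used in Section 3.2 of this paper) and of $\frac{-\nu}{1-\mu}=\frac{\Omega^2}{4\partial_v r}$ in $v$ to control both the radiated mass and the width $r_2-r_1=\int_{v_1}^{v_2}\lambda\,dv$, and runs the evolution until the inner cone $v=v_1$ reaches the centre $\Gamma$, where $r_1=m_1=0$ by regularity; there $\mu(u,v_2)\ge\frac{2(m_2-m_1)}{r_2}$ and $r_2=r_2-r_1$, and the integrated focusing and mass-loss estimates produce exactly the threshold $E(\delta_0)$, the logarithm arising from integrals of $dr/r$ during the infall. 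Your sketch never uses the centre or the inner cone, which is where the hypothesis $\eta_0>E(\delta_0)$ actually gets its leverage, and it defers the entire derivation of $E$ to ``careful estimates''. A smaller but genuine error: the wave equation (\ref{eqn phi}) is not $\partial_u\partial_v(r\phi)=0$; rather $\partial_u\partial_v(r\phi)=(\partial_u\partial_v r)\,\phi$, and $\partial_u\partial_v r=-\frac{\partial_u r\,\partial_v r}{r}-\frac{\Omega^2}{4r}\neq 0$ here, so the field does not reduce to a free $1{+}1$ wave.
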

\begin{remark}
	By comparing the order of the lower bounds of $\eta_0$, for $0<\d_0\ll 1$ we have: If $\eta_0>\delta_0\ln\bigg(\frac{1}{\delta_0}\bigg)$, then a trapped surface is guaranteed to form in $\mathcal{D}$.
\end{remark}
\begin{remark}
To prove Theorem \ref{Christodoulou theorem spherical}, Christodoulou didn't impose any assumption along incoming cone $v=v_1$. And his original proof was based on a geometric Bondi coordinate together with a null frame. In a forth coming paper \cite{An-Lim} we reprove Theorem \ref{Christodoulou theorem spherical} with double null foliations and generalize this result to Einstein-Maxwell-scalar field system.  
\end{remark}

\noindent Once a trapped surface is formed, in \cite{Chr.1} Christodoulou further showed that the {\color{black}Penrose} {\color{black}diagram} for such spacetimes {\color{black}is} as follows:

\begin{center}
\begin{minipage}[!t]{0.4\textwidth}
\begin{tikzpicture}[scale=0.75]
\draw [white](-1, -2.5)-- node[midway, sloped, above,black]{$\Gamma$}(0, -2.5);
\draw [white](0, 0)-- node[midway, sloped, above,black]{$\mathcal{B}$}(4, 0);
\draw [white](0, -0.75)-- node[midway, sloped, above,black]{$\mathcal{T}$}(4.5, -0.75);
\draw [white](-1, 0)-- node[midway, sloped, above,black]{$\mathcal{B}_0$}(1, 0);
\draw (0,0) to [out=-5, in=210] (3.8, 0.5);
\draw (0,0) to [out=-40, in=215] (4.7, 0.3);
\draw [white](0, -5)-- node[midway, sloped, below,black]{$u=u_0$}(5, 0);
\draw [white](0, -0.65)-- node[midway, sloped, below,black]{$\mathcal{A}$}(2.8, -0.65);
\draw [white](0, -1.5)-- node[midway, sloped, below,black]{$v=v_1$}(2.2, -3.7);
\draw [thick](0, -3)--(1, -4);

\draw [thick] (0, -5)--(0,0);
\draw [thick] (5, 0)--(0,-5);
\draw[fill] (0,0) circle [radius=0.08];
\end{tikzpicture}
\end{minipage}
\begin{minipage}[!t]{0.6\textwidth}
\end{minipage}
\hspace{0.05\textwidth}
\end{center}

\noindent Here $\Gamma$ is the center (invariant under $SO(3)$). $\mathcal{B}_0$ is the first singular point along $\Gamma$.
{\color{black}$\mathcal{A}$ stands for an apparent horizon. Under spherical symmetry,
$\mathcal{A}=\{(u,v)| \mbox{ where }\partial_v r(u,v)=0\}$.} The spacetime region {\color{black}between $\mathcal{A}$ and $\mathcal{B}$} is called the {\color{black}trapped} region $\mathcal{T}$, {\color{black}where $\partial_v r(u,v)<0$ {\color{black}and $r(u,v)>0$}}. {\color{black}The} hypersurface $\mathcal{B}$ is the future boundary of this spacetime ; it is singular. In \cite{Chr.1}, Christodoulou also proved that at any point $(u,v)$ of the singular boundary $\mathcal{B}$, we have $r(u,v)=0$. \\ 

A natural question to ask is: \textit{how singular are the curvatures at this future boundary $\mathcal{B}$}{\color{black}?} In \cite{Chr.1}, Christodoulou showed that $\mathcal{B}$ is spacelike.\footnote{See Theorem 5.1 (j) in \cite{Chr.1}.} And at {\color{black} any point $(u,v)\in\mathcal{T}$ and $(u,v)$ is close to $\mathcal{B}$}, a lower bound of Kretschmann scalar holds: \footnote{See Theorem 5.1 (l) in \cite{Chr.1}.}
$$R^{\alpha\beta\gamma\delta}R_{\alpha\beta\gamma\delta}(u,v)\gtrsim \f{1}{r(u,v)^6}.$$
To prove the lower bounds, Christodoulou used an ODE type estimates: in \cite{Chr.1}, by algebraic calculations, it can be showed that at $(u,v)$
\begin{equation}\label{mass inequality}
R_{\a\b\gamma\delta}R^{\a\b\gamma\delta}(u,v)\geq \f{32\, m(u,v)^2}{r(u,v)^6},
\end{equation}
where $m(u,v)$ is the Hawking mass of $S_{u,v}$ defined in (\ref{Hawking mass}). Remarkably, $m(u,v)$ satisfies an ODE type monotone property: in the trapped region $\mathcal{T}$, it holds that $\partial_u m(u,v)\geq 0$.

\begin{center}
\begin{minipage}[!t]{0.4\textwidth}
\begin{tikzpicture}[scale=0.75]
\draw [white](-1, -2.5)-- node[midway, sloped, above,black]{$\Gamma$}(0, -2.5);
\draw [white](0, 0)-- node[midway, sloped, above,black]{$\mathcal{B}$}(3, 0);
\draw [white](2.5, 0.1)-- node[midway, sloped, above,black]{$\t{b}_0$}(3.2, 0.1);
\draw [white](3, -0.4)-- node[midway, sloped, below,black]{$b_1$}(4.2, -0.4);
\draw [white](0, -0.75)-- node[midway, sloped, above,black]{$\mathcal{T}$}(4.5, -0.75);
\draw [white](-1, 0)-- node[midway, sloped, above,black]{$\mathcal{B}_0$}(1, 0);
\draw (0,0) to [out=-5, in=210] (3.8, 0.5);
\draw (0,0) to [out=-40, in=215] (4.7, 0.3);
\draw [white](0, -5)-- node[midway, sloped, below,black]{$u=u_0$}(5, 0);
\draw [white](0, -0.65)-- node[midway, sloped, below,black]{$\mathcal{A}$}(2.8, -0.65);
\draw [white](0, -1.5)-- node[midway, sloped, below,black]{$v=v_1$}(2.2, -3.7);
\draw [thick](0, -3)--(1, -4);

\draw [thick] (0, -5)--(0,0);
\draw [thick] (5, 0)--(0,-5);
\draw [thick] (2.9, 0.1)--(3.4, -0.4);
\draw[fill] (0,0) circle [radius=0.08];
\end{tikzpicture}
\end{minipage}
\begin{minipage}[!t]{0.6\textwidth}
\end{minipage}
\hspace{0.05\textwidth}
\end{center}
Fix $\t{b}_0\in {\color{black}\mathcal{T} \mbox{ and } \t{b}_0 \mbox{ close to } \mathcal{B}}$. Assume $\t{b}_0$ has coordinate $(\t{u}_0, \t{v}_0)$ and $b_1\in \mathcal{A}$ has coordinate $(\t{u}_1, \t{v}_0)$. Then at $\t{b}_0$ we have
$$R_{\a\b\gamma\delta}R^{\a\b\gamma\delta}(\t{u}_0,\t{v}_0)\geq \f{32\, m(\t{u}_0,\t{v}_0)^2}{r(\t{u}_0, \t{v}_0)^6}\geq \f{32\, m(\t{u}_1,\t{v}_0)^2}{r(\t{u}_0, \t{v}_0)^6}= \f{8\, r(\t{u}_1,\t{v}_0)^2}{r(\t{u}_0, \t{v}_0)^6}.$$
For the second inequality, we use $\partial_u m(u,v)\geq 0$. And for the last {\color{black}identity}, we use that along apparent horizon $\mathcal{A}$ it holds that $\partial_v r(\t{u}_1, \t{v}_0)=0$ and thus $m(\t{u}_1,\t{v}_0)=r(\t{u}_1,\t{v}_0)/2$.  Hence we get at {\color{black}$\t{b}_0\in \mathcal{T}$ near $\mathcal{B}$}
$$R_{\a\b\gamma\delta}R^{\a\b\gamma\delta}(\t{u}_0,\t{v}_0) \gtrsim \f{1}{r(\t{u}_0, \t{v}_0)^6}{\color{black}.}$$
{\color{black}This derives} the lower bounds of $R_{\a\b\gamma\delta}R^{\a\b\gamma\delta}$ {\color{black}close to} $\mathcal{B}$.\\
}

\noindent How about the upper bound? Following the \underline{qualitative} \textit{extension principle} \footnote{It states that for characteristic initial data prescribed on initial incoming and outgoing hypersurfaces $\{(u,v)\}$, where $r(u,v)\geq \e>0$, then for (\ref{ES}) the local existence towards the future can be proved.} established by Christodoulou in \cite{Chr.1.5}, it can be proved that at any point {\color{black}$(u,v)\in\mathcal{T}$ near $\mathcal{B}$}
$$R^{\alpha\beta\gamma\delta}R_{\alpha\beta\gamma\delta}(u,v)\lesssim \exp\big(\exp(\f{1}{r(u,v)})\big).$$
To get a better upper bound, we need to give a different proof and we need to improve all the estimates into \underline{quantitive sharp} estimates. In this article, we improve the double-exponential upper bounds to polynomial rates.

\begin{theorem}\label{thm1.1}
{\color{black}With the same characteristic initial data Christodoulou used in \cite{Chr.1}}, for the dynamical spacetime solutions of (\ref{ES}) under spherical symmetry, inside a {\color{black}trapped} region, at any point {\color{black}$(u,v)\in \mathcal{T}$ and $(u,v)$ is close to $\mathcal{B}$}, there exists a positive number $N$ (depending on the initial data at an earlier time), such that
$$R^{\alpha\beta\gamma\delta}R_{\alpha\beta\gamma\delta}(u,v)\lesssim \f{1}{r(u,v)^N}.$$
\end{theorem}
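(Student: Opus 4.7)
The plan is to reduce the Kretschmann bound to polynomial bounds on the Hawking mass $m(u,v)$ and on suitable renormalizations of the scalar field's null derivatives, and to establish those bounds by sharp ODE analysis along null directions in the trapped region.

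First I would decompose the Kretschmann scalar in the double-null frame. In the spherically symmetric metric (\ref{metric}), all curvature components are algebraic functions of $r$, $\Omega$, $m$, $\partial_u\phi$, $\partial_v\phi$. Writing
$$R_{\alpha\beta\gamma\delta}R^{\alpha\beta\gamma\delta} = C_{\alpha\beta\gamma\delta}C^{\alpha\beta\gamma\delta} + 2R_{\alpha\beta}R^{\alpha\beta} - \frac{1}{3}R^2,$$
the Weyl-squared piece equals $48 m^2/r^6$, while the Ricci-squared pieces reduce through (\ref{ES}) to polynomial expressions in $(\partial_u\phi)(\partial_v\phi)/\Omega^2$ and $(\partial_u\phi)^2(\partial_v\phi)^2/\Omega^4$. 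So a bound of the form $r^{-N}$ reduces to showing that $m$, $|\partial_u\phi|/\Omega$, and $|\partial_v\phi|/\Omega$ are each dominated by a power of $1/r$.

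Second, I would work along null hypersurfaces using the core system in double-null spherical symmetry: the two Raychaudhuri equations $\partial_u(\Omega^{-2}\partial_u r) = -r\Omega^{-2}(\partial_u\phi)^2$ and $\partial_v(\Omega^{-2}\partial_v r) = -r\Omega^{-2}(\partial_v\phi)^2$; the wave equation for $r$, which after using (\ref{Hawking mass}) reads $\partial_u\partial_v r = -m\Omega^2/(2r^2)$; the scalar wave equation $\partial_u(r\partial_v\phi) + \partial_v r \cdot \partial_u\phi = 0$ (and its $u\leftrightarrow v$ counterpart); and the Christodoulou mass monotonicity $\partial_u m \ge 0$, $\partial_v m \ge 0$ inside $\mathcal{T}$. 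Starting from a seed hypersurface just past the apparent horizon $\mathcal{A}$, on which the relevant quantities are controlled by the earlier-time evolution, I would run a bootstrap: assume polynomial-in-$1/r$ upper bounds for $m$, $\Omega^{\pm 1}$, and the rescaled derivatives $\Omega^{-1}|\partial_u\phi|$, $\Omega^{-1}|\partial_v\phi|$, and then integrate the equations above along $u=\mathrm{const}$ and $v=\mathrm{const}$ lines to improve those bounds. The Raychaudhuri and mass monotonicities replace Gronwall-type estimates by sharp one-sided inequalities, so the accumulated contributions are polynomial in $1/r$ rather than exponential.

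The main obstacle will be closing the bootstrap across the region where the mass ratio $\mu := 2m/r$ passes from $1$ (just past $\mathcal{A}$) to arbitrarily large values approaching $\mathcal{B}$. In the transport equation for $r\partial_v\phi$, the coefficient $\partial_v r$ can, via (\ref{Hawking mass}), be written in terms of $\Omega^2(\mu-1)/\partial_u r$, and treating this factor by a plain Gronwall in $\mu$ is precisely what produces the double-exponential coming from \cite{Chr.1.5}. To avoid this I plan to introduce renormalized matter variables in which $\mu-1$ is absorbed into an explicit polynomial power of $r$, leaving transport equations whose coefficients are merely algebraic in $r$; iterating this together with mass monotonicity should then yield polynomial bounds on $m$ and the null derivatives of $\phi$, with an exponent $N$ depending only on the initial-data energies at the seed hypersurface. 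Substituting these bounds back into the Kretschmann decomposition above gives the desired inequality.
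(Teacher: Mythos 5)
Your overall architecture --- reduce the Kretschmann scalar to algebraic expressions in $m$, $r$, $\O^{-2}\partial_u\phi\,\partial_v\phi$, then close a bootstrap by integrating the null transport system --- is compatible with the paper's (which instead substitutes directly into the explicit formula (\ref{Kretschmann2})), and you correctly locate the danger in the Gronwall step for the matter field. But there is a genuine gap exactly there, and it is the gap the whole paper is organized around. Integrating $\partial_u(r\partial_v\phi)=-\partial_v r\,\partial_u\phi$ against a constant-$r$ foliation produces a Gronwall inequality whose coefficient is $-\f{r\partial_v r}{r\partial_u r}\cdot\f{C_1}{C_2}\cdot\f1r$; to conclude the \emph{sharp} exponent $|\partial\phi|\ls r^{-2}$ one needs this coefficient to be $\f{1+O(r^{1/100})}{r}$ with an error integrable against $dr/r$, so that the exponential in Gr\"onwall is exactly $e^{-\ln r+O(1)}$. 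That is supplied by Proposition \ref{improved ru rv}, a quantitative refinement $|r\partial_u r+C_1|\le 2r^{1/100}$ of the qualitative limit in Proposition \ref{Prop 4.1}, proved by a separate geometric argument. Your plan of ``absorbing $\mu-1$ into an explicit polynomial power of $r$'' controls the coefficient only up to a multiplicative $1+o(1)$ and therefore yields $|\partial\phi|\ls r^{-2-\e}$ at best.

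The second missing mechanism is the one that makes the sharp exponent indispensable: the polynomial upper bound on $\O^{-2}$, which your bootstrap quietly assumes and which is also what bounds $m\ls r\,\O^{-2}|\partial_u r\,\partial_v r|$ from above (mass monotonicity gives only a lower bound on $m$ in $\mathcal T$, and Raychaudhuri gives only the upper bound $\O^2\ls 1/r$ of (\ref{Omega upper}), not a lower one). In the paper this comes from the $\log$ structure of (\ref{eqn Omega}): with $|\partial\phi|\ls r^{-2}$ one gets $|\partial_u\partial_v\log\O|\ls r^{-4}$, and two integrations, each trading a null integral for $\int dr/r$ via Proposition \ref{Prop 4.1}, give $|\log\O|\ls\log(1/r)$, hence $\O^{-2}\ls r^{-C}$. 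With the lossy exponent $2+\e$ this degenerates to $\O^{-2}\ls\exp(r^{-2\e})$, and the final Kretschmann bound is exponential in $1/r$, not polynomial. So as written your argument would improve the known double-exponential bound but would not reach the theorem; the missing ideas are precisely Proposition \ref{improved ru rv} and the $\log\O$ integration.
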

\begin{remark}
With the previously mentioned lower bound, we have
$$\f{1}{r(u,v)^6} \lesssim R^{\alpha\beta\gamma\delta}R_{\alpha\beta\gamma\delta}(u,v)\lesssim \f{1}{r(u,v)^N}.$$
Hence polynomial blow-up upper bounds are \underline{sharp}. 
\end{remark}

{\color{black}
\begin{remark}
By using (\ref{mass inequality}), we also bound {\color{black}the} Hawking mass. And it holds
$$m(u,v)\lesssim \f{1}{r(u,v)^{\f{N}{2}-3}},$$
where $N\geq 6$ is {\color{black}a} constant depends on initial data.  

\end{remark}
} 

\noindent To obtain Theorem \ref{thm1.1}, a crucial step is to prove sharp blow-up rates for $\partial_u \phi$ and $\partial_v \phi$.
Here we have
\begin{theorem}\label{prop1.2}
Under the same assumptions as in Theorem \ref{thm1.1}, at any point {\color{black}$(u,v)\in \mathcal{T}$ and $(u,v)$ is close to $\mathcal{B}$}, there {\color{black}exist} positive {\color{black}numbers} $D_1$ and $D_2$ (depending on the initial data), such that
$$|\partial_u \phi(u,v)|\leq \f{D_1}{r(u,v)^2}, \quad \quad |\partial_v \phi(u,v)|\leq \f{D_2}{r(u,v)^2}.$$
\end{theorem}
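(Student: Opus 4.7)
The plan is to propagate bounds on $\partial_u\phi$ and $\partial_v\phi$ from a reference hypersurface just inside $\mathcal{A}$ to arbitrary points of $\mathcal{T}$ close to $\mathcal{B}$, using the two transport equations induced by $\Box_g\phi=0$. Writing $\nu:=\partial_u r$ and $\lambda:=\partial_v r$, the spherically symmetric wave equation gives $\partial_u\partial_v\phi + r^{-1}(\nu\partial_v\phi+\lambda\partial_u\phi)=0$, equivalently
\[\partial_v(r\partial_u\phi)=-\nu\,\partial_v\phi,\qquad \partial_u(r\partial_v\phi)=-\lambda\,\partial_u\phi.\]
Throughout $\mathcal{T}$, $\nu,\lambda<0$; further, the Hawking-mass identity $4\nu\lambda/\Omega^2=2m/r-1$ combined with Christodoulou's monotonicity $\partial_u m\geq 0$ gives a strictly positive lower bound for $m$ (via its value on $\mathcal{A}$), and hence for $|\nu\lambda|/\Omega^2$.

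Fix a reference hypersurface $\underline{H}_*$ inside $\mathcal{T}$, close to $\mathcal{A}$, on which Christodoulou's qualitative extension principle and trapped surface formation theorem yield $|r\partial_u\phi|,|r\partial_v\phi|\leq C_0$ for some $C_0$ determined by the initial data on $\{u=u_0\}\cup\{v=v_1\}$. For a target point $(u,v)\in\mathcal{T}$ close to $\mathcal{B}$, consider the characteristic rectangle whose past boundary sits on $\underline{H}_*$ together with an appropriate piece of $\{u=u_0\}$, and make the bootstrap assumption $|\partial_u\phi|\leq 2D_1/r^2$ and $|\partial_v\phi|\leq 2D_2/r^2$ on it. Integrating the first transport equation at fixed $u$ and changing variables through $dv=dr/\lambda$ yields
\[|r\partial_u\phi(u,v)|\leq C_0+2D_2\int_{r(u,v)}^{r_{\underline{H}_*}}\frac{|\nu|}{|\lambda|}\,\frac{dr'}{r'^{\,2}},\]
and a symmetric estimate in the $u$-direction controls $|r\partial_v\phi|$. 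Granted a uniform bound $|\nu/\lambda|,|\lambda/\nu|\lesssim 1$ on the bootstrap region, the right-hand side is at most $C_0+\mathrm{const}\cdot D_2/r$, and after dividing by $r$ one recovers, for suitably chosen $D_1,D_2$, the improved bounds $|\partial_u\phi|\leq D_1/r^2$ and $|\partial_v\phi|\leq D_2/r^2$, closing the bootstrap and giving the theorem.

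The main obstacle is twofold. First, one must produce the two-sided control of the ratios $|\nu/\lambda|$ and $|\lambda/\nu|$ near $\mathcal{B}$, which a priori can degenerate as focusing intensifies. The plan is to combine (i) the incoming/outgoing Raychaudhuri identities
\[\partial_u(\nu/\Omega^2)=-r(\partial_u\phi)^2/\Omega^2,\qquad \partial_v(\lambda/\Omega^2)=-r(\partial_v\phi)^2/\Omega^2,\]
which make $\nu/\Omega^2$ and $\lambda/\Omega^2$ monotone and hence controlled by their values on the reference hypersurfaces, with (ii) the evolution $\partial_u\lambda=\partial_v\nu=-\Omega^2 m/(2r^2)$ and the lower bound on $m$, which together fix the scale of $|\nu|$, $|\lambda|$ and $\Omega^2$ in terms of $r$. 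Second, everything must remain \emph{quantitatively sharp}: any Gronwall-type exponentiation along the integration would degrade the bound from $D_i/r^{2}$ to $r^{-(2+\varepsilon)}$, and so the two coupled transport equations have to be exploited in a precisely balanced way rather than integrated one at a time as independent ODEs.
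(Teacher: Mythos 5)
You have the right transport equations and you have correctly located the danger (any loss in the critical $1/r$ kernel destroys the sharp exponent), but the mechanism you propose cannot deliver the theorem, for two concrete reasons. First, the bootstrap as written does not close. Near $\mathcal{B}$ one has $|\nu/\lambda|\to C_1/C_2$ and $|\lambda/\nu|\to C_2/C_1$, so your two improvement inequalities read, to leading order, $2D_2\,C_1/C_2\le D_1$ and $2D_1\,C_2/C_1\le D_2$; multiplying them gives $4D_1D_2\le D_1D_2$, a contradiction. This is not an artifact of the factor $2$: the kernel $\int_r^{r_*}dr'/r'^2\sim 1/r$ is dominated by its lower endpoint, so it cannot be made small by shrinking the region, and the effective coefficient in front of the critical $1/r$ term is exactly $1$ after normalization, not less than $1$ — a contraction-type bootstrap has no room. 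The coupled system must instead be run through a Gr\"onwall/integrating-factor argument on a single normalized quantity; the paper takes $\Psi(r)=\max\{\sup_{L_r}|C_2\,r\partial_u\phi|,\sup_{L_r}|C_1\,r\partial_v\phi|\}$ on a constant-$r$ foliation, obtains $\Psi(\tilde r)\le I.D.+\int_{\tilde r}^{2^{-l+1}}\frac{1+O(r^{1/100})}{r}\Psi(r)\,dr$, and exponentiates to get $\Psi(\tilde r)\le C/\tilde r$. Contrary to your closing remark, Gr\"onwall does \emph{not} degrade the bound here, precisely because the kernel is $\frac{1+O(r^{1/100})}{r}$ with leading coefficient exactly $1$ and an \emph{integrable} correction, so the exponential is $e^{-\ln\tilde r+O(1)}=O(1/\tilde r)$.

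Second, the input you propose for controlling the ratios — Raychaudhuri monotonicity plus the lower bound on $m$ — only yields two-sided boundedness $|\nu/\lambda|,|\lambda/\nu|\lesssim 1$. That is not enough: with a kernel $c(u,v)/r$ where $c$ is merely pinched between positive constants, Gr\"onwall returns $r^{-\sup c}$ and you land exactly in the $r^{-(2+\varepsilon)}$ scenario you are trying to avoid. What is actually needed is the quantitative convergence $r\partial_u r\to -C_1$, $r\partial_v r\to -C_2$ \emph{with a polynomial rate}, so that $\frac{r\partial_v r}{r\partial_u r}\cdot\frac{C_1}{C_2}=1+O(r^{1/100})$. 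The paper manufactures this in two stages that are absent from your plan: a preliminary energy estimate giving $|\partial_u\phi|,|\partial_v\phi|\lesssim r^{-(3+\alpha)}$ (Proposition \ref{preliminary}), which then feeds a geometric comparison argument along a carefully chosen quadrilateral of characteristics to upgrade the qualitative limits of Proposition \ref{Prop 4.1} to $|r\partial_u r+C_1|\le 2r^{1/100}$ and $|r\partial_v r+C_2|\le 2r^{1/100}$ (Proposition \ref{improved ru rv}). Without some substitute for this quantitative refinement, the "precisely balanced" exploitation of the coupled transport equations that you correctly call for remains an aspiration rather than an argument.
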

\begin{remark}
Here the exponent $2$ is sharp.
\end{remark}
\begin{remark}
These estimates further imply $|\phi|\lesssim |\log r|$. And the $N$ in Theorem \ref{thm1.1} depends on the values of $D_1$ and $D_2$. 
\end{remark}

{\color{black}
\noindent As a case study, in Section \ref{A Case Study} we also provide more precise upper bounds for spacetimes close to Schwarzschild metric:\\
 
\noindent {\textbf Theorem \ref{close to Schwarzschild}.} We consider the trapezoid region $T_0$ below. 

\begin{minipage}[!t]{0.55\textwidth}
\begin{tikzpicture}[scale=0.55]
\draw [white](-3, 0)-- node[midway, sloped, above,black]{$r(u,v)=0$}(3, 0);
\draw [white](-4, -1)-- node[midway, sloped, above,black]{$u=U$}(-3, 0);
\draw [white](3, 0)-- node[midway, sloped, above,black]{$v=V$}(4, -1);
\draw [thick] (-3,0) to [out=10,in=-170] (0,0);
\draw [thick] (0,0) to [out=10,in=-170] (3,0);
\draw [thick] (-6,-3)--(-3,0);
\draw [thick] (3,0)--(6,-3);
\draw [white](-5, -2.8)-- node[midway, sloped, above,black]{$r=\f{1}{2^{l_0}}$}(5, -2.8);
\draw [thick] (-6, -3) to [out=10,in=-170] (6,-3);
\end{tikzpicture}
\end{minipage}
\begin{minipage}[!t]{0.4\textwidth}
For $l_0$ being a large positive constant, we prescribe initial data along $r=1/2^{l_0}$: requiring
\end{minipage}
\hspace{0.05\textwidth}

$$|\partial_v r+\f{M}{r}|\leq o_0(1)\cdot \f{M}{r}, \quad \quad |\partial_u r+\f{M}{r}|\leq o_0(1)\cdot \f{M}{r},$$
$$|\O^2-\f{2M}{r}|\leq o_0(1)\cdot \f{M}{r},$$
$$|\partial_u \phi|\leq o_0(1)\cdot \f{1}{r^2}, \quad \quad |\partial_v \phi|\leq o_0(1)\cdot \f{1}{r^2},$$
where $o_0(1)$ is a small positive number depending on initial data. Then for the dynamical spacetime solutions of (\ref{ES}) under spherical symmetry, under the prescribed initial data, in the {\color{black}open trapezoid} region above, we have
\begin{equation}
|R^{\alpha\beta\rho\sigma}R_{\alpha\beta\rho\sigma}|\lesssim \f{1}{r^{6+o_{{\color{black}0}}(1)^2}}.
\end{equation}
\begin{remark}
From the above theorem, we can also conclude that as the initial perturbation $o_{{\color{black}0}}(1)\rightarrow 0$, the upper bound of blow-up rate $6+o_{{\color{black}0}}(1)^2\rightarrow 6$.  
\end{remark}

}

{\color{black}
\subsection{New Ingredients.} 
\begin{enumerate}
\item In this paper we study the blow-up mechanism, which is NOT ODE type. And we find an interesting log structure.\\

\noindent To derive the blow-up upper bounds, we use the full expression of $R_{\a\b\gamma\delta}R^{\a\b\gamma\delta}$. See (\ref{Kretschmann2}). And the sharp upper bounds of $\O^{-2}(u,v)$ are crucial. To bound $\O^{-2}(u,v)$, we need to use a wave-type equation for $\log\O(u,v)$:
\begin{equation}\label{uv Omega 1}
r^2\partial_u \partial_v \log \O=\partial_u r \partial_v r+\f14\O^2-r^2 \partial_u \phi \partial_v \phi.
\end{equation}
\noindent \textit{The $\log$ structure here will play a very important role.}{\color{black}\footnote{More details of this $\log$ structure will be explained in next page.}}
From the above equation, we also see that to bound $\O^{-2}$ the sharp bounds for $\partial_u \phi, \partial_v \phi$ are also required. This requires a thorough analysis of the following wave equation as well
\begin{equation*}
r\partial_u \partial_v \phi=-\partial_u r \partial_v \phi-\partial_v r \partial_u \phi.
\end{equation*}

\noindent In this paper, we explore the log structure and study the above wave equations. We derive the sharp upper bounds for $\partial_u \phi, \partial_v \phi$ and $\O^{-2}$. For Einstein-scalar field system, these bounds are new.   \\

\item Our blow-up upper bounds are optimal. {\color{black}The $\log$ structure in \eqref{uv Omega 1} is crucially used.} \\

\noindent We proceed to derive the bounds for $\O^2(u,v)$. Unlike $\O^{-2}(u,v)$, via a monotonic property (see Section \ref{upper bounds for Omega}), we can prove $\O^2(u,v)\lesssim 1/r(u,v)$. The lower bound of $\O^2(u,v)$, that is the upper bound of $\O^{-2}(u,v)$ is much harder and it is the key for the polynomial blow-up upper bounds.\\

\noindent In \cite{Chr.1}, Christodoulou showed that at each $b_0\in \mathcal{B}$, it holds that $|r\partial_u r|$ and $|r\partial_v r|$ are bounded and are close to some non-zero constant depending on $b_0$.  Let's first pretend to ignore $-r^2\partial_u \phi \partial_v \phi$ term in (\ref{uv Omega 1}). From 
$$|\partial_u r\partial_v r+\f14 \O^2|\lesssim \f{1}{r^2}+\f{1}{r}\lesssim \f{1}{r^2}, $$
we have 
$$|r^2\partial_u \partial_v \log \O(u,v)| \leq {1}/{r(u,v)^2}, \mbox{i.e. } |\partial_u \partial_v \log \O(u,v)| \leq {1}/{r(u,v)^4}.$$  
With the fact $\{r\partial_u r, r\partial_v r\}$ are close to non-zero constants, last inequality above implies 
\begin{equation*}
\begin{split}
&|\partial_v \log \O(u,v)|\\
\lesssim& \mbox{initial data}+\int_{u_1}^{u}\f{1}{r(u',v)^4}du'\lesssim \mbox{initial data}+|\int_{u_1}^{u}\f{\partial_u r}{r(u',v)^3}du'|\\
\lesssim& \mbox{initial data}+\int_{r(u_1,v)}^{r(u,v)}\f{1}{r(u',v)^3}dr\lesssim \mbox{initial data}+\f{1}{r(u,v)^2}. \, \mbox{ And}
\end{split}
\end{equation*}
\begin{equation*}
\begin{split}
&|\log \O(u,v)|\\
\lesssim& \mbox{initial data}+\int_{v_1}^{v}\f{1}{r(u,v')^2}dv'\lesssim \mbox{initial data}+|\int_{v_1}^{v}\f{\partial_v r}{r(u,v')}dv'|\\
\lesssim& \mbox{initial data}+\int_{r(u,v_1)}^{r(u,v)}\f{1}{r(u,v')}dr\lesssim \mbox{initial data}+\log r(u,v). \\
&\mbox{Thus  }  \log \f{1}{\O(u,v)^2}\lesssim \mbox{initial data}+\log \f{1}{r(u,v)}. \mbox{ Using the $\log$}
\end{split}
\end{equation*}
\noindent structure, this means that there exists a positive constant $C$ such that 
$$|\O(u,v)^{-2}|\lesssim r(u,v)^{C}.$$ 
And $C$ depends on the constants in above inequalities. \\

\noindent Now we take the term  $-r^2\partial_u \phi \partial_v \phi$ into account. Our goal is to show 
$$|\partial_u \phi(u,v)|\lesssim \f{1}{r(u,v)^2}, \quad |\partial_v \phi(u,v)|\lesssim \f{1}{r(u,v)^2}.$$
Then we would have $|-r^2\partial_u \phi \partial_v \phi|\lesssim \f{1}{r^2}$, and it would be the same blow-up rates to $|\partial_u r\partial_v r+\f14 \O^2|\lesssim \f{1}{r^2}$. Repeat the calculation above, $|\O(u,v)^{-2}|\leq r(u,v)^{C}$ still holds for some positive constant $C$.  \\

\noindent On the other hand, if we cannot obtain the optimal exponent $2$. For $0<\epsilon\ll1$, assume that we could only prove 
$$|\partial_u \phi(u,v)|\lesssim \f{1}{r(u,v)^{2+\epsilon}}, \quad |\partial_v \phi(u,v)|\lesssim \f{1}{r(u,v)^{2+\epsilon}}.$$
For this case, we get 
$$|\partial_u \partial_v \log \O(u,v)| \leq {1}/{r(u,v)^{4+2\e}}.$$  
And it implies 
\begin{equation*}
\begin{split}
&|\partial_v \log \O(u,v)|\\
\lesssim& \mbox{initial data}+\int_{u_1}^{u}\f{1}{r(u',v)^{4+2\e}}du'\lesssim \mbox{initial data}+|\int_{u_1}^{u}\f{\partial_u r}{r(u',v)^{3+2\e}}du'|\\
\lesssim& \mbox{initial data}+\int_{r(u_1,v)}^{r(u,v)}\f{1}{r(u',v)^{3+2\e}}dr\lesssim \mbox{initial data}+\f{1}{r(u,v)^{2+2\e}}. \, \mbox{And}
\end{split}
\end{equation*}

\begin{equation*}
\begin{split}
&|\log \O(u,v)|\\
\lesssim& \mbox{initial data}+\int_{v_1}^{v}\f{1}{r(u,v')^{2+2\e}}dv'\lesssim \mbox{initial data}+|\int_{v_1}^{v}\f{\partial_v r}{r(u,v')^{1+2\e}}dv'|\\
\lesssim& \mbox{initial data}+\int_{r(u,v_1)}^{r(u,v)}\f{1}{r(u,v')^{1+2\e}}dr\lesssim \mbox{initial data}+\f{1}{r(u,v)^{2\e}}. \mbox{ Thus,}\\
\end{split}
\end{equation*}
$$\mbox{we have }  \log \f{1}{\O(u,v)^2}\lesssim \mbox{initial data}+\f{1}{r(u,v)^{2\e}} \mbox{ and } \O(u,v)^{-2}\lesssim e^{\f{1}{r(u,v)^{2\e}}}.$$

\noindent By the expression of Kretschmann scalar (\ref{Kretschmann2}), this would lead to an upper bound \underline{exponential }of $1/r$, \underline{not polynomial.}  \\

\noindent Hence, the key is to obtain the sharp upper bounds 
\begin{equation}\label{goal of phi}
|\partial_u \phi(u,v)|\lesssim \f{1}{r(u,v)^2}, \quad |\partial_v \phi(u,v)|\lesssim \f{1}{r(u,v)^{2}}.
\end{equation}
And we achieve this goal in our Theorem \ref{prop1.2}. The proof is based on a crucial improved estimates for $r\partial_u r, r\partial_v r$. These improved estimates may have other applications. And we highlight them in next paragraph.\\

\item In this paper, we found crucial geometric improved estimates for $r\partial_u r(u,v)$ and $r\partial_v r(u,v)$. Our result is general and non-perturbative. We do not require our spacetimes to be close to Schwarzschild metric.\\

\noindent To prove (\ref{goal of phi}), we take two steps. For the first step, we employ an important observation by Christodoulou and we reprove it with double null foliation in Proposition \ref{Prop 4.1}:\\

\begin{minipage}[!t]{0.3\textwidth}
\begin{tikzpicture}[scale=0.7]
\draw [white](-1, -2.5)-- node[midway, sloped, above,black]{$\Gamma$}(0, -2.5);
\draw [white](0, 0)-- node[midway, sloped, above,black]{$\mathcal{B}$}(3, 0);
\draw [white](2.5, 0.1)-- node[midway, sloped, above,black]{$b_0$}(3.2, 0.1);
\draw [white](3, -0.4)-- node[midway, sloped, below,black]{$b_1$}(4.2, -0.4);
\draw [white](0, -0.75)-- node[midway, sloped, above,black]{$\mathcal{T}$}(4.5, -0.75);
\draw [white](-1, 0)-- node[midway, sloped, above,black]{$\mathcal{B}_0$}(1, 0);
\draw (0,0) to [out=-5, in=210] (3.8, 0.5);
\draw (0,0) to [out=-40, in=215] (4.7, 0.3);
\draw [white](0, -5)-- node[midway, sloped, below,black]{$u=u_0$}(5, 0);
\draw [white](0, -0.65)-- node[midway, sloped, below,black]{$\mathcal{A}$}(2.8, -0.65);
\draw [white](0, -1.5)-- node[midway, sloped, below,black]{$v=v_1$}(2.2, -3.7);
\draw [thick](0, -3)--(1, -4);

\draw [thick] (0, -5)--(0,0);
\draw [thick] (5, 0)--(0,-5);
\draw [thick] (2.9, 0.1)--(3.4, -0.4);
\draw[fill] (0,0) circle [radius=0.08];
\end{tikzpicture}
\end{minipage}
\begin{minipage}[!t]{0.58\textwidth}
\noindent Given the same characteristic initial value problem for (\ref{ES}) as above. Assume $b_0\in \mathcal{B}$ and $b_0$ has coordinate $(\t{u}_0, \t{v}_0)$, then as $(\t{u}, \t{v}_0)\rightarrow (\t{u}_0, \t{v}_0)$ we have 
$-(r\partial_v r)(\t{u}, \t{v}_0)\rightarrow E(\t{v}_0) \mbox{ as } u\rightarrow \t{u}_0-, $ where $E$ is a positive continuous function. Similarly,  as $(\t{u}_0, \t{v})\rightarrow (\t{u}_0, \t{v}_0)$ we have 
$-(r\partial_u r)(\t{u}_0, \t{v})\rightarrow E^*(\t{u}_0), \mbox{ as } v\rightarrow \t{v}_0-,$ where $E^*$ is a positive continuous function.
\end{minipage}
\hspace{0.07\textwidth}

\noindent {\color{black}Proposition \ref{Prop 4.1}} shows that near $b_0\in \mathcal{B}$, there exists positive constants $C_1$ and $C_2$, and by the continuity of $E(v)$ and $E^*(u)$, for points close to $b_0$, the followings hold
$$|r\partial_u r+C_1|=o(1), \quad |r\partial_v r+C_2|=o(1).$$
With these and an energy estimate, we first obtain Proposition \ref{preliminary}: for $0<\a\ll 1$, in the region of interest, we have
$$|\partial_u \phi(u,v)|\lesssim \f{1}{r(u,v)^{3+\a}}, \quad |\partial_v \phi(u,v)|\lesssim \f{1}{r(u,v)^{3+\a}}.$$

\noindent The next is one of the key points in this paper. In Proposition \ref{improved ru rv}, together with a \underline{novel} \underline{geometric} argument and by applying bounds in Proposition \ref{preliminary} for $\partial_u \phi, \partial_v \phi$, we have a crucial \underline{quantitive} improvement of the estimates for $r\partial_u r$ and $r\partial_v r$: we obtain that for any $(u,v)$ close to $b_0$, it holds
$$|r\partial_u r+C_1|(u,v)\leq 2r(u,v)^{\f{1}{100}}, \quad |r\partial_v r+C_1|(u,v)\leq 2r(u,v)^{\f{1}{100}}.$$
This crucial improvement enables us to correct a potential divergent $\log r(u,v)$ term with a finite constant. (See the proof in Theorem \ref{prop1.2}.)

The conclusion and the argument in Proposition \ref{improved ru rv} will lead to future applications. With these crucial improvements, in Section \ref{section sharp phi} via using a constant $r(u,v)$ foliation, we prove Theorem \ref{prop1.2}:
$$|\partial_u \phi(u,v)|\lesssim \f{1}{r(u,v)^2}, \quad |\partial_v \phi(u,v)|\lesssim \f{1}{r(u,v)^{2}}.$$
Note that our proof in Proposition \ref{improved ru rv} is very general. We don't need our spacetimes to be close to Schwarzschild metric. Hence, the blow-up upper bounds we derived are also general. And our proof is not perturbative.

\end{enumerate}
}

\subsection{Background}

In a series of celebrated papers \cite{Chr.1}-\cite{Chr.3}, Christodoulou proved \textit{weak cosmic censorship} for (\ref{ES}) under spherical symmetry. He showed that for generic initial data, the singularities formed in the evolution of (\ref{ES}) are hidden inside black hole regions.

One could further ask: inside black holes, what are the future boundaries like? This question is related to \textit{strong cosmic censorship}. For spacetimes like Kerr and Reissner-Nordstr\"om black holes, their future boundaries are null hypersurfaces, called Cauchy horizons. In recent breakthrough papers \cite{LO3} by Luk and Oh and \cite{DL} by Dafermos and Luk, the regularities of Cauchy horizons are studied in {\color{black}detail}.  Interested readers are also {\color{black}referred} to \cite{CGNS}-\cite{Fra}, \cite{Gaj2}-\cite{PI} and \cite{VDM, VDM2}.

For spacetimes close to a Schwarzschild black hole, their future boundaries could be more singular {\color{black}than the spacetimes near Kerr or Reissner-Nordstr\"om black holes}. In \cite{Sbi} Sbierski proved the $C^0$-inextendibility of Schwarzschild spacetime. In \cite{Fou} Fournodavlos studied the backward stability of the Schwarzschild singularity for Einstein vacuum equations; Alexakis and {\color{black}Fournodavlos} \cite{AF} are exploring the forward stability problem under axial symmetry. 

Fournodavlos and Sbierski \cite{FS} also studied the asymptotic behaviours of {\color{black}linear waves} in the interior region of Schwarzschild spacetime. {\color{black}For linear wave equation in Schwarzschild background
$$\Box_{g_{Sch}}\phi=0,$$
close to spacelike singularity $r(u,v)=0$, they proved that $|\phi(u,v)|\lesssim |\log r(u,v)|$ and also gave the leading order asymptotic behaviours. Note that their bounds are consistent with the upper bounds we derive in Theorem \ref{prop1.2}. Their analysis is for linear wave equation in precise Schwarzschild background and they don't impose symmetry assumption. For our results, we impose spherical symmetry, but our theorem is for the full Einstein-scalar field system and our spacetime metric could be far away from Schwarzschild metric.} \\

The future boundary $\mathcal{B}$ in \cite{Chr.1} and Schwarzschild singularities share some common properties: in both spacetimes, the singular boundaries are spacelike. And for any point $(u,v)$ along $\mathcal{B}$, we have $r(u,v)=0.$ But spacetimes in \cite{Chr.1} are much more general. The future boundaries $\mathcal{B}$ in \cite{Chr.1} are beyond {\color{black}the} perturbative {\color{black}regimes} of Schwarzschild singularities. In this following, we will explore how singular $\mathcal{B}$ could be.

\section{Acknowlegements}
The authors would like to thank an anonymous referee for comments on a previous version. X.A. want to thank Spyros
Alexakis, Sung-Jin Oh and Willie Wong for stimulating discussions.

\section{Settings and Basic Control of Geometric Quantities}
Under spherical symmetry, with double null foliations, we have the following ansatz for metric of the $3+1$-dimensional spacetime:
$$g_{\mu\nu}dx^{\mu}dx^{\nu}=-\O^2(u,v)dudv+r^2(u,v)\big(d\theta^2+\sin^2\theta d\phi^2\big).$$
With this ansatz, {\color{black}the} Einstein scalar field system
$$\mbox{Ric}_{\mu\nu}-\f12Rg_{\mu\nu}=2T_{\mu\nu},$$
$$T_{\mu\nu}=\partial_{\mu}\phi \partial_{\nu}\phi-\f12g_{\mu\nu}\partial^{\sigma}\phi \partial_{\sigma}\phi,$$
{\color{black}can} be rewritten as
\begin{equation}\label{eqn r}
r\partial_{u}\partial_v r=-\partial_u r \partial_v r-\f14\O^2,
\end{equation}

\begin{equation}\label{eqn Omega}
r^2\partial_u \partial_v \log \O=\partial_u r \partial_v r+\f14\O^2-r^2 \partial_u \phi \partial_v \phi,
\end{equation}

\begin{equation}\label{eqn phi}
r\partial_u \partial_v \phi=-\partial_u r \partial_v \phi-\partial_v r \partial_u \phi,
\end{equation}

\begin{equation}\label{eqn u}
\partial_u (\O^{-2}\partial_u r)=-r\O^{-2}(\partial_u \phi)^2,
\end{equation}

\begin{equation}\label{eqn v}
\partial_v(\O^{-2}\partial_v r)=-r\O^{-2}(\partial_v \phi)^2.
\end{equation}

For later use, we define Hawking mass $m(u,v)$ for a two-sphere $S_{u,v}$ implicitly by
$$1-\f{2m}{r}=-4\Omega^{-2}\partial_u r \partial_v r.$$
We further introduce the dimensionless quantity
$$\mu=\f{2m}{r}.$$
Note that along the apparent horizon $\mathcal{A}$, we have $\partial_v r=0$. This implies
$$1-\f{2m}{r}=0, \quad \mu=1, \quad 2m=r, \quad \mbox{along} \quad \mathcal{A}.$$
And inside the trapped region $\mathcal{T}$, we have $\partial_v r<0, \partial_u r<0$. It follows that
$$1-\f{2m}{r}<0, \quad \mu>1, \quad 2m>r, \quad \mbox{in} \quad \mathcal{T}.\\$$

With $m$ and $\mu$, we could rewrite (\ref{eqn r})-(\ref{eqn v}) and further have
\begin{equation}\label{r v}
\partial_u (\partial_v r)=\f{\mu}{(1-\mu)r}\partial_v r \partial_u r,
\end{equation}

\begin{equation}\label{r u}
\partial_v (\partial_u r)=\f{\mu}{(1-\mu)r}\partial_v r \partial_u r,
\end{equation}

\begin{equation}\label{m u}
2\partial_u r \partial_u m=(1-\mu)r^2 (\partial_u \phi)^2,
\end{equation}

\begin{equation}\label{m v}
2\partial_v r \partial_v m=(1-\mu)r^2 (\partial_v \phi)^2.\\
\end{equation}

\subsection{Estimates for $\partial_u r$ and $\partial_v r$} Let us recall a proposition by Christodoulou in \cite{Chr.1.5} and give a proof in double null foliation:
\begin{proposition}\label{Prop 4.1} (Proposition 8.2 in \cite{Chr.1.5}):
{\color{black}Given the same characteristic initial data in a double-null foliation as above. Assume $b_0\in \mathcal{B}$ has coordinate $(u^*(v), v)$, we have 
$$-(r\partial_v r)(u,v)\rightarrow E(v)$$
as $u\rightarrow u^*(v)-$, where $E$ is a positive continuous function of $v$. Similarly, assume each $b_0\in \mathcal{B}$ has coordinate $(u, v^*(u))$. Then it also holds
$$-(r\partial_u r)(u,v)\rightarrow E^{*}(u)$$
as $v\rightarrow v^*(u)$, where $E^*$ is a positive continuous function of $u$.} Also, the non-central component $\mathcal{B}\backslash\mathcal{B}_0$ of the singular boundary $\mathcal{B}$ is a $C^1$ strictly spacelike curve, i.e., the functions $v^*$ and $u^*$ are strictly decreasing $C^1$ functions.
\end{proposition}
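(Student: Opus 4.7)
My plan is to obtain $E(v)$ as the $r\to 0^+$ limit of $-r\partial_v r$ along a $v=\text{const}$ slice, by reducing to a first-order ODE in $r$ whose integrability is ensured by monotonicity of the Hawking mass. The starting point is the master identity
\begin{equation*}
\partial_u(r\partial_v r)=\partial_u r\,\partial_v r+r\partial_u\partial_v r=-\tfrac14\Omega^2,
\end{equation*}
obtained from the product rule together with (\ref{eqn r}); symmetrically, $\partial_v(r\partial_u r)=-\tfrac14\Omega^2$. Since $\Omega^2>0$, the function $-r\partial_v r$ is strictly increasing in $u$ along $\{v=\text{const}\}\cap\mathcal T$; it vanishes on the apparent horizon $\mathcal A$ and is positive thereafter.

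\textbf{ODE reduction and limit.} Fix $v$ and change the parameter from $u$ to $r$ on the slice (legitimate because $\partial_u r<0$ in $\mathcal T$). Writing $F:=r\partial_v r$ and substituting the Hawking-mass relation $\Omega^2(1-\mu)=-4\partial_u r\,\partial_v r$, a short manipulation gives
\begin{equation*}
\frac{dF}{dr}\bigg|_{v\text{ fixed}}=\frac{-\Omega^2/4}{\partial_u r}=-\frac{F}{2m-r}.
\end{equation*}
From (\ref{m u}) and the signs $\partial_u r<0$, $1-\mu<0$ in $\mathcal T$, one has $\partial_u m\geq 0$; hence $m$ is non-decreasing in $u$, and non-increasing in $r$ along the slice. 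Picking a reference slice $u=u_1$ strictly to the future of $\mathcal A$ (so that $r_1(v):=r(u_1,v)$ satisfies $2m(u_1,v)>r_1$), I obtain the uniform estimate $2m(r',v)-r'\geq 2m(u_1,v)-r_1>0$ for $r'\in[0,r_1]$. Integrating the ODE from $r_1$ down to $r\in(0,r_1]$ yields
\begin{equation*}
|F|(r,v)=|F|(r_1,v)\,\exp\!\left(\int_r^{r_1}\frac{dr'}{2m(r',v)-r'}\right),
\end{equation*}
and letting $r\to 0^+$ produces a finite, strictly positive value that I set equal to $E(v)$. Continuity in $v$ follows by choosing $u_1$ along a smooth curve interior to $\mathcal T$ (so $r_1(v)$, $|F|(r_1,v)$ and $m(\cdot,v)$ depend smoothly on $v$) and applying dominated convergence with the uniform local-in-$v$ bound on the integrand. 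The symmetric argument with $u\leftrightarrow v$ gives $E^*(u)$.

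\textbf{$C^1$ spacelike character of $\mathcal B\setminus\mathcal B_0$.} The half-derivatives $\tfrac12\partial_u(r^2)=r\partial_u r$ and $\tfrac12\partial_v(r^2)=r\partial_v r$ extend continuously up to $\mathcal B\setminus\mathcal B_0$ with the nonzero limits $-E^*(u)$ and $-E(v)$, so $r^2$ is $C^1$ up to $\mathcal B$. The implicit function theorem applied to $r^2=0$ then yields the graph representations $v=v^*(u)$ and $u=u^*(v)$, with slope
\begin{equation*}
\frac{dv^*}{du}=-\frac{r\partial_u r}{r\partial_v r}\bigg|_{\mathcal B}=-\frac{E^*(u)}{E(v^*(u))}<0,
\end{equation*}
so $v^*$ is strictly decreasing $C^1$. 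Because the metric is $-\Omega^2 du\,dv$, the negativity of $dv^*/du$ forces the tangent $(1,dv^*/du)$ to have positive squared length, making $\mathcal B\setminus\mathcal B_0$ strictly spacelike.

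\textbf{Main obstacle.} The critical step is the uniform-in-$v$ lower bound on $2m-r$ above the reference slice; once this is secured from Hawking-mass monotonicity together with $\mu>1$ in $\mathcal T$, continuity of $E(v)$ and the $C^1$ regularity of $\mathcal B\setminus\mathcal B_0$ reduce to standard continuous-dependence arguments. A secondary technicality is placing $u_1$ strictly between $\mathcal A$ and $\mathcal B$ in a full neighborhood of $b_0$; this relies on the qualitative description of $\mathcal T$ from \cite{Chr.1}.
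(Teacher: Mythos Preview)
Your proof is correct and follows essentially the same route as the paper's: both start from $\partial_u(r\partial_v r)=-\tfrac14\Omega^2$, convert it to a first-order ODE in the variable $r$ via the Hawking-mass relation, and exploit $\partial_u m\geq 0$ in $\mathcal T$ to control the resulting integral. The only difference is in packaging: the paper uses the cruder bound $2m\geq r_A$ coming from the apparent horizon, obtains the two-sided estimate $1<\tfrac{\mu}{\mu-1}\leq \tfrac{r_A}{r_A-r}$, and extracts the limit by a $\limsup-\liminf$ squeeze that requires sending the reference point $u_1\to u^*(v)^-$; you instead use the sharper uniform bound $2m-r\geq 2m(u_1,v)-r_1>0$ from a fixed reference slice, which makes $\int_0^{r_1}\tfrac{dr'}{2m-r'}$ manifestly finite and produces an explicit formula for $E(v)$ in one step. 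Your version is slightly cleaner and makes the continuity-in-$v$ argument (which the paper does not spell out) more transparent.
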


\begin{center}
\begin{tikzpicture}[scale=0.7]
\draw [white](-1, -2.5)-- node[midway, sloped, above,black]{$\Gamma$}(0, -2.5);
\draw [white](0, 0)-- node[midway, sloped, above,black]{$\mathcal{B}$}(3, 0);
\draw [white](2.5, 0.1)-- node[midway, sloped, above,black]{$b_0$}(3.2, 0.1);
\draw [white](3, -0.4)-- node[midway, sloped, below,black]{$b_1$}(4.2, -0.4);
\draw [white](0, -0.75)-- node[midway, sloped, above,black]{$\mathcal{T}$}(4.5, -0.75);
\draw [white](-1, 0)-- node[midway, sloped, above,black]{$\mathcal{B}_0$}(1, 0);
\draw (0,0) to [out=-5, in=210] (3.8, 0.5);
\draw (0,0) to [out=-40, in=215] (4.7, 0.3);
\draw [white](0, -5)-- node[midway, sloped, below,black]{$u=u_0$}(5, 0);
\draw [white](0, -0.65)-- node[midway, sloped, below,black]{$\mathcal{A}$}(2.8, -0.65);
\draw [white](0, -1.5)-- node[midway, sloped, below,black]{$v=v_1$}(2.2, -3.7);
\draw [thick](0, -3)--(1, -4);

\draw [thick] (0, -5)--(0,0);
\draw [thick] (5, 0)--(0,-5);
\draw [thick] (2.9, 0.1)--(3.4, -0.4);
\draw[fill] (0,0) circle [radius=0.08];
\end{tikzpicture}
\hspace{0.07\textwidth}
\end{center}

\begin{proof}
In the trapped region from (\ref{m u}), we have $\partial_u m{\color{black}\geq}0$. For each $v$, we assume the 2-sphere $b_1=(u_A(v),v)$ lays on $\mathcal{A}$ with radius $r_A(v)$. Hence for any $(u,v)\in \mathcal{T}$, we have
$$2m(u,v)\geq 2m(u_A(v),v)=r_A(v).$$
Thus in $\mathcal{T}$, we have
$$\mu(u,v)\geq \f{r_A(v)}{r(u,v)}.$$
From (\ref{r v}), we obtain
$$\f{1}{|\partial_v r|}\cdot\f{\partial |\partial_v r|}{\partial u}=-\f{\mu}{\mu-1}\cdot \f{\partial \log r}{\partial u},$$
and hence
$$\f{-\partial \log |\partial_v r|/\partial u}{\partial \log r/\partial u}=\f{\mu}{\mu-1}.$$
We conclude that in $\mathcal{T}$
$$1<\f{-\partial \log |\partial_v r|/\partial u}{\partial \log r/\partial u}\leq \f{r_A}{r_A-r}.$$
From $$1<\f{-\partial \log |\partial_v r|/\partial u}{\partial \log r/\partial u},$$
we have
$$0<\f{\partial \log r|\partial_v r|}{\partial u}.$$
Taking $v$ fixed and integrating with respect to $u$ we obtain
$$r|\partial_v r|(u,v)>r|\partial_v r|(u_1,v),$$
{\color{black}where} we require $u_A(v)<u_1\leq u \leq u^*(v)$, and $(u^*(v), v)$ is according to the singular boundary along constant $v$.

Using
$$\f{-\partial \log |\partial_v r|/\partial u}{\partial \log r/\partial u}\leq \f{r_A}{r_A-r},$$
we have
$$\f{\partial \log r|\partial_v r|}{\partial u}\leq -\f{\partial \log r}{\partial u}\cdot \f{r}{r_A-r}=-\f{\partial r}{\partial u}\cdot \f{1}{r_A-r}=\f{\partial \log (r_A-r)}{\partial u}.$$
This implies
$$\log \f{r|\partial_v r|(u,v)}{r|\partial_v r|(u_1, v)}\leq \log \f{r_A(v)-r(u,v)}{r_A(v)-r(u_1,v)},$$
and hence
$$\f{r|\partial_v r|(u,v)}{r|\partial_v r|(u_1, v)}\leq \f{r_A(v)-r(u,v)}{r_A(v)-r(u_1,v)}\leq \f{r_A(v)}{r_A(v)-r(u_1,v)}.$$
Therefore, we conclude
$$\f{r_A(v)}{r_A(v)-r(u_1,v)} (r\partial_v r)(u_1, v)\leq (r\partial_v r)(u,v)< (r\partial_v r)(u_1, v).$$
A direct checking gives
$$\limsup_{u\rightarrow u^*(v)-}\big(-(r\partial_v r)(u,v)\big)-\liminf_{u\rightarrow u^*(v)-}\big(-(r\partial_v r)(u,v)\big)\leq \f{r(u_1,v)}{r_A-r(u_1,v)}\big(-(r\partial_v r)(u_1,v)\big).$$
Taking $u_1\rightarrow u^*(v)-$, we have that $r(u_1, v)\rightarrow 0$. Therefore, we {\color{black}conclude}
$$-(r\partial_v r)(u,v) \, \, \mbox{tends to a positive limit} \, \, E(v) \, \, \mbox{as} \, \, u\rightarrow u^*(v)-.$$
Similar arguments work for $-(r\partial_ u{\color{black}r})(u,v)$. And we have
$$-(r\partial_u r)(u,v) \, \, \mbox{tends to a positive limit} \, \, E^*(u) \, \, \mbox{as} \, \, v\rightarrow v^*(u)-.$$
Last, since
$$\f{\partial r^2}{\partial v}=2r \partial_v r, \quad  \quad \quad \f{\partial r^2}{\partial u}=2r \partial_u r,$$
the above yields that function $r^2(u,v)$ extends to a $C^1$ function to $\mathcal{Q}\cup(\mathcal{B}\backslash \mathcal{B}_0)$. And thus $\mathcal{B}\backslash\mathcal{B}_0$ is a $C^1$ curve.

\end{proof}

\subsection{Upper Bounds for $\Omega^2$}\label{upper bounds for Omega}
From (\ref{eqn u}) we have
$$\partial_u (\Omega^{-2}\cdot -\partial_u r)=r\O^{-2}(\partial_u \phi)^2\geq 0.$$
Thus, for $(u,v)\in \mathcal{T}$ and $(u_0(v),v)\in \mathcal{A}$ we have
$$\f{-\partial_u r (u,v)}{\Omega^2(u,v)}\geq \f{-\partial_u r (u_0(v),v)}{\Omega^2(u_0(v),v)}=c_0, \,\, \mbox{for some positive constant} \,\, c_0.$$
Hence
\begin{equation}\label{Omega upper}
\O^2(u,v)\leq c_0^{{\color{black}-1}}\cdot [-\partial_u r (u,v)]\leq \f{D}{r(u,v)},
\end{equation}
where $D$ is a uniform number depending on initial data.

\section{Preliminary Bounds for $\partial_u \phi$ and $\partial_v \phi$}

The aim of this and the next two sections is to prove
\begin{equation}\label{goal}
|r^2\partial_u \phi|\leq D_1, \mbox{ and } |r^2\partial_v \phi|\leq D_2,
\end{equation}
with $D_1, D_2$ being uniform numbers depending on initial data. To achieve this goal, we first derive some preliminary estimates.

\begin{proposition}\label{preliminary}
For $0<\alpha\leq 1$, in the region of interest, we have
\begin{equation}
|\partial_u \phi(u,v)|\leq \f{I_0}{r^{3+\alpha}(u,v)}, \mbox{ and } |\partial_v \phi(u,v)|\leq \f{I_0}{r^{3+\alpha}(u,v)},
\end{equation}
where $I_0$ is a uniform number depending on initial data.
\end{proposition}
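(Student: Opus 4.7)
My strategy is a bootstrap argument based on the wave equation \eqref{eqn phi}, which I rewrite in the two transport forms
\begin{equation*}
\partial_u(r\,\partial_v\phi) = -\partial_v r \cdot \partial_u\phi, \qquad \partial_v(r\,\partial_u\phi) = -\partial_u r \cdot \partial_v\phi.
\end{equation*}
The geometric input is the two-sided pinch $c \leq |r\,\partial_u r|,\,|r\,\partial_v r| \leq K$ valid in a suitable neighborhood of $\mathcal{B}$: the positive lower bound and the finite upper bound both come from Proposition \ref{Prop 4.1} together with continuity of $E$ and $E^{*}$, and by shrinking this neighborhood toward $\mathcal{B}$ the ratio $K/c$ can be brought arbitrarily close to $1$.

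I then set up the bootstrap hypothesis
\begin{equation*}
|\partial_u\phi|(u,v),\ |\partial_v\phi|(u,v)\ \leq\ \frac{I_0}{r(u,v)^{3+\alpha}}
\end{equation*}
throughout the region of interest, with $I_0$ to be fixed later. On the initial curves $u=u_0$ and $v=v_1$ the inequality holds by choosing $I_0$ large in terms of the initial data. Integrating the first transport equation from $u_0$ to $u$ at fixed $v$ gives
\begin{equation*}
|r\,\partial_v\phi|(u,v) \leq |r\,\partial_v\phi|(u_0,v) + \int_{u_0}^{u} |\partial_v r|\,|\partial_u\phi|\,du'.
\end{equation*}
Plugging the bootstrap estimate into the integrand, using $|\partial_v r| \leq K/r$, and changing variable from $u'$ to $r$ via $|\partial_u r| \geq c/r$ (so $du' \leq (r/c)\,dr$), one computes
\begin{equation*}
\int_{u_0}^u |\partial_v r|\,|\partial_u\phi|\,du'\ \leq\ \frac{K\,I_0}{c\,(2+\alpha)}\cdot\frac{1}{r(u,v)^{2+\alpha}}.
\end{equation*}
Dividing by $r$ gives $|\partial_v\phi| \leq C_1/r + \tfrac{KI_0}{c(2+\alpha)}\cdot r^{-(3+\alpha)}$, and a symmetric argument for $\partial_v(r\,\partial_u\phi) = -\partial_u r\cdot \partial_v\phi$ yields the same bound for $|\partial_u\phi|$.

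To close the bootstrap I need $\tfrac{K}{c(2+\alpha)} < 1$, so that $\bigl(1 - \tfrac{K}{c(2+\alpha)}\bigr)I_0$ can absorb the $C_1$ contribution after noting $C_1/r \leq C_1/r^{3+\alpha}$ for $r \leq 1$. Since $K/c$ can be made arbitrarily close to $1$ by taking the region of interest close enough to $\mathcal{B}$, the inequality $\tfrac{K}{c(2+\alpha)} < 1$ holds for every $0 < \alpha \leq 1$, and $I_0$ is then fixed in terms of initial data (and $\alpha$).

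The main obstacle is extracting the two-sided bound on $r\,\partial_u r$ and $r\,\partial_v r$ with $K/c$ close to $1$; this rests on the pointwise limits of Proposition \ref{Prop 4.1} combined with the continuity of $E$ and $E^{*}$ on the compact portion of $\mathcal{B}$ under consideration. The loss of the exponent from the sharp $2$ to $3+\alpha$ is precisely what produces the factor $\tfrac{1}{2+\alpha}$ in the integral bound; this margin is what lets the bootstrap close using only the crude Proposition \ref{Prop 4.1} estimates, rather than the quantitatively refined bounds of Proposition \ref{improved ru rv} that will be needed for the sharp $1/r^2$ rate in Theorem \ref{prop1.2}.
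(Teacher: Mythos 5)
Your overall strategy --- a pointwise bootstrap on the two transport forms $\partial_u(r\partial_v\phi)=-\partial_v r\,\partial_u\phi$ and $\partial_v(r\partial_u\phi)=-\partial_u r\,\partial_v\phi$ --- is genuinely different from the paper's proof, which instead integrates the divergence identity for the weighted density $C_2\,r^{2\alpha}(r^2\partial_u\phi)^2+C_1\,r^{2\alpha}(r^2\partial_v\phi)^2$ over $D_0$ to obtain $L^2$ flux bounds and then applies Cauchy--Schwarz along a single null line. Your route is more elementary, and your closing remark is the right diagnosis: the slack between the exponent $3+\alpha$ and the sharp exponent $2$ is exactly what produces the contraction factor $\tfrac{1}{2+\alpha}$ that lets the crude input of Proposition \ref{Prop 4.1} suffice.

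There is, however, a genuine gap in the step that closes the bootstrap. You claim the two-sided pinch $c\le |r\partial_u r|,\,|r\partial_v r|\le K$ can be arranged with $K/c$ arbitrarily close to $1$ by shrinking the neighborhood. That is false in general: Proposition \ref{Prop 4.1} gives $-r\partial_u r\to C_1=E^{*}(u)$ and $-r\partial_v r\to C_2=E(v)$, and these limits are independent of one another (each is separately rescaled by the residual gauge freedom $u\mapsto f(u)$, $v\mapsto g(v)$), so the best common pinch only achieves $K/c\approx\max(C_1,C_2)/\min(C_1,C_2)$, which may well exceed $2+\alpha$; your condition $\tfrac{K}{c(2+\alpha)}<1$ then fails. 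The paper is careful about precisely this issue: the asymmetric weights $C_2$ and $C_1$ in its energy density are chosen so that the cross terms cancel without assuming $C_1=C_2$. The gap is repairable inside your framework. Setting $A=\sup r^{3+\alpha}|\partial_u\phi|$ and $B=\sup r^{3+\alpha}|\partial_v\phi|$ over the region, your first transport estimate actually yields $B\le C+\tfrac{C_2}{C_1(2+\alpha)}A$ and the second yields $A\le C+\tfrac{C_1}{C_2(2+\alpha)}B$; composing them, the relevant coefficient is the product $\tfrac{C_2}{C_1(2+\alpha)}\cdot\tfrac{C_1}{C_2(2+\alpha)}=(2+\alpha)^{-2}<1$, so the coupled system closes for any ratio $C_2/C_1$. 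Equivalently, run the bootstrap on the weighted quantities $C_2\,r^{3+\alpha}|\partial_u\phi|$ and $C_1\,r^{3+\alpha}|\partial_v\phi|$, in the spirit of the quantity $\Psi$ used in the paper's proof of Theorem \ref{prop1.2}. With that correction your argument goes through.
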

\begin{proof}
Assume the whole diamond region below is in trapped region $\mathcal{T}$.

\begin{minipage}[!t]{0.4\textwidth}
\begin{tikzpicture}[scale=0.55]
\draw [white](-3, -0.2)-- node[midway, sloped, below,black]{$P$}(3, -0.2);
\draw [white](-3, -2)-- node[midway, sloped, below,black]{${\color{black}D_0}$}(3, -2);
\draw [white](-3, 0)-- node[midway, sloped, above,black]{$r(u,v)=0$}(3, 0);
\draw [white](-4, -1)-- node[midway, sloped, above,black]{$u=U$}(-3, 0);
\draw [white](3, 0)-- node[midway, sloped, above,black]{$v=V$}(4, -1);
\draw [white](0, -5)-- node[midway, sloped, below,black]{$u=U'$}(4, -1);
\draw [white](0, -5)-- node[midway, sloped, below,black]{$v=V'$}(-4, -1);
\draw [white](0, 0)-- node[midway, sloped, above, black]{$u=U_0$}(-2.8,-2.8);
\draw [white](0, 0)-- node[midway, sloped, above, black]{$v=V_0$}(2.8,-2.8);
\draw[thick] (0,0)--(-2.5,-2.5);
\draw[thick] (0,0)--(2.5, -2.5);
\draw [thick] (-3,0) to [out=10,in=-170] (0,0);
\draw [thick] (0,0) to [out=10,in=-170] (3,0);
\draw [thick] (-4,-1)--(-3,0);
\draw [thick] (3,0)--(4,-1);
\draw [thick] (-4,-1)--(0,-5);
\draw [thick] (4,-1)--(0,-5);
\draw[fill] (0,0) circle [radius=0.15];
\end{tikzpicture}
\end{minipage}
\begin{minipage}[!t]{0.5\textwidth}
We denote the rectangular region on the left to be $D_0$. Choose $D_0$ to be small enough. And we will focus on this region.
\end{minipage}
\hspace{0.05\textwidth}

\noindent From (\ref{eqn phi}), we have
$$\partial_v (r^2\partial_u \phi)^2=2r^3 \partial_v r (\partial_u \phi)^2-2r^3 \partial_u r\partial_u \phi \partial_v \phi,$$
$$\partial_u (r^2\partial_v \phi)^2=2r^3 \partial_u r (\partial_v \phi)^2-2r^3 \partial_v r\partial_u \phi \partial_v \phi.$$
For $0<\alpha\leq 1$, we have
$$\partial_v [r^{2\alpha}(r^2\partial_u \phi)^2]=(2\alpha+2)\partial_v r\cdot r^{2\alpha}\cdot r^3 (\partial_u \phi)^2-2\partial_u r\cdot r^{2\alpha}\cdot r^3 \partial_u \phi \partial_v \phi,$$
$$\partial_u [r^{2\alpha}(r^2\partial_v \phi)^2]=(2\alpha+2)\partial_u r\cdot r^{2\alpha}\cdot r^3 (\partial_v \phi)^2-2\partial_v r\cdot r^{2\alpha}\cdot r^3 \partial_u \phi \partial_v \phi.$$
By Proposition \ref{Prop 4.1}, inside a sufficiently small $D_0$ we have:
$$\partial_v r+\f{C_2}{r}=\f{o(1)}{r}, \quad \quad \partial_u r+\f{C_1}{r}=\f{o(1)}{r}.$$
We obtain
$$\partial_v [r^{2\alpha}(r^2\partial_u \phi)^2]=(2\alpha+2)\cdot[-C_2+o(1)]\cdot r^{2\alpha+2}\cdot (\partial_u \phi)^2-2\cdot[-C_1+o(1)]\cdot r^{2\alpha+2}\cdot \partial_u \phi \partial_v \phi,$$
$$\partial_u [r^{2\alpha}(r^2\partial_v \phi)^2]=(2\alpha+2)\cdot[-C_1+o(1)]\cdot r^{2\alpha+2}\cdot (\partial_v \phi)^2-2\cdot[-C_2+o(1)]\cdot r^{2\alpha+2}\cdot \partial_u \phi \partial_v \phi.$$
Adding these two expression together, we have
\begin{equation*}
\begin{split}
&\partial_v [C_2\cdot r^{2\alpha}(r^2\partial_u \phi)^2]+\partial_u [C_1\cdot r^{2\alpha}(r^2\partial_v \phi)^2]\\
=&2\alpha\cdot C_2\cdot[-C_2+o(1)]r^{2\alpha+2}(\partial_u \phi)^2+2\alpha\cdot C_1\cdot[-C_1+o(1)]r^{2\alpha+2}(\partial_v \phi)^2\\
&+2\cdot C_2\cdot[-C_2+o(1)]r^{2\alpha+2}(\partial_u \phi)^2+2\cdot C_1\cdot[-C_1+o(1)]r^{2\alpha+2}(\partial_v \phi)^2\\
&-2\cdot C_2\cdot[-C_1+o(1)]\cdot r^{2\alpha+2}\cdot \partial_u \phi \partial_v \phi-2\cdot C_1\cdot[-C_2+o(1)]\cdot r^{2\alpha+2}\cdot \partial_u \phi \partial_v \phi\\
=&2\alpha\cdot C_2\cdot[-C_2+o(1)]r^{2\alpha+2}(\partial_u \phi)^2+2\alpha\cdot C_1\cdot[-C_1+o(1)]r^{2\alpha+2}(\partial_v \phi)^2\\
&-2C_2^2\cdot r^{2\alpha+2}(\partial_u \phi)^2-2C_1^2\cdot r^{2\alpha+2}(\partial_v \phi)^2\\
&+2\cdot o(1)\cdot C_2 \cdot r^{2\alpha+2}(\partial_u \phi)^2+2\cdot o(1)\cdot C_1\cdot r^{2\alpha+2}(\partial_v \phi)^2\\
&+2C_1C_2\cdot r^{2\alpha+2}\partial_u \phi \partial_v \phi+2C_1C_2\cdot r^{2\alpha+2}\partial_u \phi \partial_v \phi\\
&-2\cdot o(1)\cdot C_1\cdot r^{2\alpha+2}\partial_u \phi \partial_v \phi-2\cdot o(1) \cdot C_2\cdot r^{2\alpha+2}\partial_u \phi \partial_v \phi\\
\leq&2\alpha\cdot C_2\cdot[-C_2+o(1)]r^{2\alpha+2}(\partial_u \phi)^2+2\alpha\cdot C_1\cdot[-C_1+o(1)]r^{2\alpha+2}(\partial_v \phi)^2\\
&+2\cdot o(1)\cdot C_2 \cdot r^{2\alpha+2}(\partial_u \phi)^2+2\cdot o(1) \cdot C_1\cdot r^{2\alpha+2}(\partial_v \phi)^2\\
&+2\cdot o(1)\cdot C_1\cdot r^{2\alpha+2}\partial_u \phi \partial_v \phi+2\cdot o(1)\cdot C_2 \cdot r^{2\alpha+2}\partial_u \phi \partial_v \phi\\
\leq& 0, \text{ for sufficiently small } o(1) \text{ and any fixed } \alpha>0.
\end{split}
\end{equation*}

\begin{center}
\begin{tikzpicture}[scale=0.55]
\draw [white](-3, 0.2)-- node[midway, sloped, above,black]{$P$}(3, 0.2);
\draw [white](-4, -1)-- node[midway, sloped, above,black]{$u=U$}(-3, 0);
\draw [white](3, 0)-- node[midway, sloped, above,black]{$v=V$}(4, -1);
\draw [white](0, -5)-- node[midway, sloped, below,black]{$u=U'$}(4, -1);
\draw [white](0, -5)-- node[midway, sloped, below,black]{$v=V'$}(-4, -1);
\draw [white](0, 0)-- node[midway, sloped, above, black]{$u=U_0$}(-2.8,-2.8);
\draw [white](0, 0)-- node[midway, sloped, above, black]{$v=V_0$}(2.8,-2.8);
\draw[thick] (0,0)--(-2.5,-2.5);
\draw[thick] (0,0)--(2.5, -2.5);
\draw [thick] (-3,0) to [out=10,in=-170] (0,0);
\draw [thick] (0,0) to [out=10,in=-170] (3,0);
\draw [thick] (-4,-1)--(-3,0);
\draw [thick] (3,0)--(4,-1);
\draw [thick] (-4,-1)--(0,-5);
\draw [thick] (4,-1)--(0,-5);
\draw[fill] (0,0) circle [radius=0.15];
\end{tikzpicture}
\end{center}
Rewrite $$\iint_{D_0} \l \partial_v [C_2\cdot r^{2\alpha}(r^2\partial_u \phi)^2]+\partial_u [C_1\cdot r^{2\alpha}(r^2\partial_v \phi)^2] \r du dv\leq 0,$$ and we have

\begin{equation*}
\begin{split}
&\int_{v=V'}^{v=V_0}C_1\cdot(r^{2+\alpha}\partial_v  \phi)^2 (U_0, v)dv+\int_{u=U'}^{u=U_0}C_2\cdot(r^{2+\alpha}\partial_u  \phi)^2 (u, V_0)du\\
\leq& \int_{v=V'}^{v=V_0}C_1\cdot(r^{2+\alpha}\partial_v \phi)^2 (U', v)dv+\int_{u=U'}^{u=U_0}C_2\cdot(r^{2+\alpha}\partial_u \phi)^2 (u,V')du\\
=& \mbox{initial data}=I_0^2.
\end{split}
\end{equation*}
Note that equation (\ref{eqn phi}) is equivalent to
\begin{equation}
\partial_u(r\partial_v \phi)=-\partial_v r \partial_u \phi,
\end{equation}
and
\begin{equation}
\partial_v(r\partial_u \phi)=-\partial_u r \partial_v \phi.
\end{equation}
At $P=(U_0, V_0)$, we have
\begin{equation}
\begin{split}
r\partial_v \phi (U_0, V_0)=r\partial_v \phi (U', V_0)-\int_{U'}^{U_0}\partial_v r \partial_u \phi(u, V_0)du.
\end{split}
\end{equation}
This gives
\begin{equation}
\begin{split}
&|r\partial_v \phi (U_0, V_0)|\\
\leq& |r\partial_v \phi (U', V_0)|+|\int_{U'}^{U_0}\partial_v r \partial_u \phi(u, V_0)du|\\
\leq&|r\partial_v \phi (U', V_0)|+\bigg(\int_{U'}^{U_0}(r^{2+\alpha}\partial_u \phi)^2(u, V_0)du\bigg)^{\frac12}\bigg(\int_{U'}^{U_0}(\frac{\partial_v r}{r^{2+\alpha}})^2  (u,V_0)du \bigg)^{\frac12}\\
\leq& \text{ const}+\f{I_0}{\sqrt{C_2}}\cdot \bigg(\int_{U'}^{U_0}(\frac{\partial_v r}{r^{2+\alpha}})^2  (u,V_0)du\bigg)^{\frac12}\\
\lesssim& \text{ const}+\frac{I_0}{r^{2+\alpha}(U_0,V_0)}.
\end{split}
\end{equation}
For the last step, we use
\begin{equation}
\begin{split}
&\f{I_0}{\sqrt{C_2}}\cdot\bigg(\int_{U'}^{U_0}(\frac{\partial_v r}{r^{2+\alpha}})^2  (u,V_0)du\bigg)^{\frac12}\\
\leq&\f{I_0}{\sqrt{C_2}}\cdot\bigg(\int_{U'}^{U_0}(\frac{(\partial_v r)^2}{r^{4+2\alpha} r_u})r_u  (u,V_0)du\bigg)^{\frac12}\\
=&\f{I_0}{\sqrt{C_2}}\cdot\bigg(\int_{r(U',V_0)}^{r(U_0,V_0)}(\frac{(\partial_v r)^2}{r^{4+2\alpha} r_u})dr\bigg)^{\frac12}\approx \f{I_0}{\sqrt{C_2}}\cdot\bigg(\int_{r(U',V_0)}^{r(U_0,V_0)}(\frac{-C_2^2}{C_1 r^{5+2\alpha}})dr\bigg)^{\frac12}\\
\approx& \f{I_0}{\sqrt{C_2}}\cdot\big(\frac{C_2^2}{C_1 r^{4+2\alpha}}(U_0,V_0)\big)^{\f12}\approx \frac{I_0}{r^{2+\alpha}(U_0,V_0)}.
\end{split}
\end{equation}
Hence, we have for $0<\alpha<1$
$$|\partial_v \phi|\leq \f{I_0}{r^{3+\alpha}}.$$
Similarly,
$$|\partial_u \phi|\leq \f{I_0}{r^{3+\alpha}}.$$
\end{proof}

\section{Refined Estimates of $r\partial_u r$ and $r\partial_v r$}

\begin{minipage}[!t]{0.4\textwidth}
\begin{tikzpicture}[scale=0.75]
\draw [white](-1, -2.5)-- node[midway, sloped, above,black]{$\Gamma$}(0, -2.5);
\draw [white](0, 0)-- node[midway, sloped, above,black]{$\mathcal{B}$}(4, 0);
\draw [white](0, -0.75)-- node[midway, sloped, above,black]{$\mathcal{T}$}(4.5, -0.75);
\draw [white](-1, 0)-- node[midway, sloped, above,black]{$\mathcal{B}_0$}(1, 0);
\draw (0,0) to [out=-5, in=210] (3.8, 0.5);
\draw (0,0) to [out=-40, in=215] (4.7, 0.3);
\draw [white](0, -0.65)-- node[midway, sloped, below,black]{$\mathcal{A}$}(2.8, -0.65);

\draw [thick] (0, -5)--(0,0);
\draw [thick] (5, 0)--(0,-5);
\draw[fill] (0,0) circle [radius=0.08];
\end{tikzpicture}
\end{minipage}
\begin{minipage}[!t]{0.6\textwidth}
In \cite{Chr.1}, for general gravitational collapse, we have the Penrose diagram on the left. The curve marked with $\mathcal{A}$ is the apparent horizon.\\

To study the singular boundary $\mathcal{B}$ (where $r=0$), we consider the diamond region (in $\mathcal{T}$) below. The rectangular region is called $D_0$.
\end{minipage}
\hspace{0.05\textwidth}

\begin{minipage}[!t]{0.4\textwidth}
\begin{tikzpicture}[scale=0.55]
\draw [white](-3, -0.2)-- node[midway, sloped, below,black]{$P$}(3, -0.2);
\draw [white](-3, 0)-- node[midway, sloped, above,black]{$r(u,v)=0$}(3, 0);
\draw [white](-4, -1)-- node[midway, sloped, above,black]{$u=U$}(-3, 0);
\draw [white](3, 0)-- node[midway, sloped, above,black]{$v=V$}(4, -1);
\draw [white](0, -5)-- node[midway, sloped, below,black]{$u=U'$}(4, -1);
\draw [white](0, -5)-- node[midway, sloped, below,black]{$v=V'$}(-4, -1);
\draw [white](0, 0)-- node[midway, sloped, above, black]{$u=U_0$}(-2.8,-2.8);
\draw [white](0, 0)-- node[midway, sloped, above, black]{$v=V_0$}(2.8,-2.8);
\draw[thick] (0,0)--(-2.5,-2.5);
\draw[thick] (0,0)--(2.5, -2.5);
\draw [thick] (-3,0) to [out=10,in=-170] (0,0);
\draw [thick] (0,0) to [out=10,in=-170] (3,0);
\draw [thick] (-4,-1)--(-3,0);
\draw [thick] (3,0)--(4,-1);
\draw [thick] (-4,-1)--(0,-5);
\draw [thick] (4,-1)--(0,-5);
\draw[fill] (0,0) circle [radius=0.1];
\end{tikzpicture}
\end{minipage}
\hspace{0.05\textwidth}
\begin{minipage}[!t]{0.55\textwidth}
Take $D_0$ to be sufficiently small. By Proposition \ref{Prop 4.1} and continuity, in $D_0$ we have
$$r\partial_u r+C_1=o(1), \quad r\partial_v r+C_2=o(1).\\$$

To derive sharp blow-up rates for $\partial_u \phi$ and $\partial_v \phi$, we will need improved estimates for $r\partial_u r$ and $r\partial_v r$.
\end{minipage}

We then zoom in $D_0$ and assume $U_0=0, V_0=0$. 

\begin{minipage}[!t]{0.4\textwidth}
\begin{tikzpicture}[scale=0.4]
\draw [white](-3, -0.2)-- node[midway, sloped, below,black]{$P$}(3, -0.2);
\draw [white](-3, -5.2)-- node[midway, sloped, below,black]{$Q$}(3, -5.2);
\draw [white](-13.5, -10.2)-- node[midway, sloped, below,black]{$A$}(-7, -10.2);
\draw [white](-6, -12.7)-- node[midway, sloped, below,black]{$B$}(-9.5, -12.7);
\draw [white](13.5, -10.2)-- node[midway, sloped, below,black]{$A'$}(7, -10.2);
\draw [white](6, -12.7)-- node[midway, sloped, below,black]{$B'$}(9.5, -12.7);
\draw [white](-3, 0)-- node[midway, sloped, above,black]{$r(u,v)=0$}(3, 0);
\draw [white](-4, -1)-- node[midway, sloped, above,black]{$u=U$}(-3, 0);
\draw [white](3, 0)-- node[midway, sloped, above,black]{$v=V$}(4, -1);
\draw [white](0, -5)-- node[midway, sloped, below,black]{$u=\tilde{U}$}(-7.5, -12.5);
\draw [white](0, -5)-- node[midway, sloped, below,black]{$v=\tilde{V}$}(7.5, -12.5);
\draw [white](0, -20)-- node[midway, sloped, below,black]{$u=U'$}(10, -10);
\draw [white](0, -20)-- node[midway, sloped, below,black]{$v=V'$}(-10, -10);
\draw [white](0, 0)-- node[midway, sloped, above, black]{$u=U_0=0$}(-10,-10);
\draw [white](0, 0)-- node[midway, sloped, above, black]{$v=V_0=0$}(10,-10);
\draw[thick] (0,0)--(-10,-10);
\draw[thick] (0,0)--(10, -10);
\draw[thick] (0,-5)--(-7.5,-12.5);
\draw[thick] (0,-5)--(7.5,-12.5);
\draw[thick] (-10, -10)--(-7.5, -12.5);
\draw[thick] (10, -10)--(7.5, -12.5);
\draw[thick] (-7.5, -12.5)--(0, -20);
\draw[thick] (7.5, -12.5)--(0, -20);
\draw [thick] (-3,0) to [out=10,in=-170] (0,0);
\draw [thick] (0,0) to [out=10,in=-170] (3,0);
\draw [thick] (-4,-1)--(-3,0);
\draw [thick] (3,0)--(4,-1);
\draw [thick] (-4,-1)--(0,-5);
\draw [thick] (4,-1)--(0,-5);
\draw[fill] (0,0) circle [radius=0.15];
\draw[fill] (0,-5) circle [radius=0.15];
\draw[fill] (-10,  -10) circle [radius=0.15];
\draw[fill] (-7.5,-12.5) circle [radius=0.15];
\draw[fill] (10,  -10) circle [radius=0.15];
\draw[fill] (7.5,-12.5) circle [radius=0.15];
\end{tikzpicture}
\end{minipage}
\hspace{0.05\textwidth}
\begin{minipage}[!t]{0.55\textwidth}
\end{minipage}

\noindent In the image above, we have $U, V, \tilde{U}, \tilde{V}, U', V' < 0$. We are now ready to state and prove
\begin{proposition}\label{improved ru rv}
For $Q\in D_0$ sufficiently close to $P$, we have improved estimates
\begin{equation}
|r\partial_u r+C_1|(Q)\leq 2r(Q)^{\f{1}{100}}, \quad |r\partial_v r+C_2|(Q)\leq 2r(Q)^{\f{1}{100}}.
\end{equation}
\end{proposition}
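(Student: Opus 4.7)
The plan is to extract from~(\ref{eqn r}) the identity $\partial_u(r\partial_v r)=-\Omega^2/4$ (and, symmetrically, $\partial_v(r\partial_u r)=-\Omega^2/4$), integrate it along $\{v=v_Q\}$ out to the singular boundary $\mathcal{B}$, and control the resulting $\Omega^2$-integral using the bound $\Omega^2\leq D/r$ from Section~\ref{upper bounds for Omega}. Both identities follow by expanding $\partial_u(r\partial_v r)=\partial_u r\,\partial_v r+r\,\partial_u\partial_v r$ and substituting~(\ref{eqn r}); in particular $r\partial_v r$ is monotone decreasing in $u$ along each line $\{v=\mathrm{const}\}$.

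For $Q=(u_Q,v_Q)\in D_0$, integrating from $u_Q$ up to the boundary value $u^*(v_Q)$ and using the limit $r\partial_v r\to -E(v_Q)$ from Proposition~\ref{Prop 4.1} yields
\begin{equation*}
(r\partial_v r)(Q)+E(v_Q)=\frac{1}{4}\int_{u_Q}^{u^*(v_Q)}\Omega^2(u',v_Q)\,du'.
\end{equation*}
Substituting $\Omega^2\leq D/r$ and changing variables to $r'=r(u',v_Q)$ via $du'=dr'/\partial_u r$, together with $\partial_u r=(-C_1+o(1))/r$ throughout $D_0$, converts the right-hand side into $\int_0^{r(Q)}D/(C_1-o(1))\,dr'\lesssim r(Q)$. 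Thus $|(r\partial_v r)(Q)+E(v_Q)|\lesssim r(Q)$, and the same argument applied to $\partial_v(r\partial_u r)=-\Omega^2/4$ integrated in $v$ gives $|(r\partial_u r)(Q)+E^*(u_Q)|\lesssim r(Q)$.

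What remains is to replace $E(v_Q)$ and $E^*(u_Q)$ by the \emph{fixed} constants $C_2$ and $C_1$ appearing in the statement. Since Proposition~\ref{Prop 4.1} only asserts continuity of $E,E^*$, the a priori estimates $|E(v_Q)-C_2|$ and $|E^*(u_Q)-C_1|$ are merely $o(1)$ without a quantitative rate, which is insufficient for the target $r^{1/100}$. To close this gap I would use the scalar-field bounds $|\partial_u\phi|,|\partial_v\phi|\leq I_0/r^{3+\alpha}$ from Proposition~\ref{preliminary} together with the mass identities~(\ref{m u}),~(\ref{m v}), which control the transverse variation of $r\partial_u r$ and $r\partial_v r$ along $\mathcal{B}$ and thereby promote the bare continuity of $E,E^*$ to a (possibly very small) power-type modulus. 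After shrinking $D_0$ so that both $r(Q)$ and this modulus of continuity are dominated by $\tfrac{1}{2}r(Q)^{1/100}$, one obtains the two stated inequalities.

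The main obstacle is precisely this last quantitative upgrade: Proposition~\ref{Prop 4.1} alone delivers only the qualitative $o(1)$ bound recorded just before the statement, so some form of the ``novel geometric argument'' advertised in the introduction is required to turn the preliminary $\phi$-derivative control of Proposition~\ref{preliminary} into a concrete H\"older-type regularity for the limiting profiles $E,E^*$ in a neighbourhood of $\mathcal{B}$. Once this is in hand, the first two paragraphs together close the loop and yield the factor $2r(Q)^{1/100}$ in the proposition.
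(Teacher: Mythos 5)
Your first two paragraphs are a correct computation, and the identity $\partial_v(r\partial_u r)=-\tfrac14\Omega^2$ together with the bound (\ref{Omega upper}) is indeed the engine of the paper's proof as well. But the proposal does not prove the proposition: as you yourself note, everything hinges on replacing $E(v_Q)$ and $E^*(u_Q)$ by the fixed constants $C_2=E(0)$ and $C_1=E^*(0)$ with an error $\lesssim r(Q)^{1/100}$, and this step is left open. Worse, a quantitative (H\"older-type) modulus of continuity for the boundary profiles $E,E^*$ near $P$ is essentially \emph{equivalent} to the statement being proved -- the proposition is precisely such a modulus for $r\partial_u r$ and $r\partial_v r$ -- so the reduction in your third paragraph is circular rather than a proof strategy. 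The suggested route through the mass identities (\ref{m u})--(\ref{m v}) is also not obviously viable: with only $|\partial\phi|\lesssim r^{-3-\alpha}$ the right-hand sides of those identities are far from integrable up to $r=0$, so they do not directly yield regularity of the limits along $\mathcal{B}$.

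The paper closes the gap by never touching the continuity of $E,E^*$ at all. Instead of integrating from $Q$ out to the boundary point $(u^*(v_Q),v_Q)$ and then sliding along $\mathcal{B}$, it compares $(r\partial_u r)(Q)$ with $(r\partial_u r)(P)=-C_1$ at the \emph{fixed} corner $P=(0,0)$ along a three-segment path $P\to A\to B\to Q$ that stays in the interior: $A=(0,V')$ is chosen on the axis $u=0$ with $r(A)=r_0^{1/100}$ (where $r_0=r(Q)$), and $B=(\tilde U,V')$. On the two $v$-segments $PA$ and $BQ$ one uses $\partial_v(r\partial_u r)=-\tfrac14\Omega^2\ $ and $\Omega^2\leq D/r$, so each contributes $\lesssim\int(-\partial_v r)\,dv'=O(r_0^{1/100})$, exactly as in your first paragraph. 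The decisive point is the horizontal segment $AB$: by (\ref{asympupperbdofrsq}) the coordinate width satisfies $|\tilde U|\lesssim r_0^2$, while along $AB$ one has $r\geq r_0^{1/100}$, so even though Proposition \ref{preliminary} only gives $|\partial_u(r\partial_u r)|\lesssim r^{-4-2\alpha}$ there, the total variation is $\lesssim r_0^{-(4+2\alpha)/100}\cdot r_0^{2}\leq r_0$. Summing the three contributions gives $|(r\partial_u r)(Q)+C_1|\leq 2r_0^{1/100}$ directly. This quadratic smallness of the coordinate distance from $Q$ to the axes, traded against a large but controlled radius on the connecting segment, is the ``novel geometric argument'' your proposal is missing; without it, or some genuine substitute for the H\"older regularity of $E,E^*$, the argument does not close.
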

\begin{proof}
Denote $r(Q)=r_0$. Along $u=0$, we first find $A$ in the past of $P$ and satisfying
$$r(A)=r(Q)^{\f{1}{100}}=r_0^{\f{1}{100}}.$$
Assume $B$ is the intersection of $v=V'$ and $u=\tilde{U}$. When $Q$ is sufficiently close to $P$. $A$ and $B$ are still in the region $D_0$. And in $D_0$, we have
$$r\partial_u r+C_1=o(1), \quad r\partial_v r+C_2=o(1).$$
This implies
\begin{equation*}
\begin{split}
r^2(u,0)=&r^2(0,0)+\int_0^u \partial_u (r^2(u',0))du'=\int_0^u (2r\partial_u r) (u',0)du'\\
=&\int_0^u[-2C_1+o(1)]du'=[-2C_1+o(1)]u.
\end{split}
\end{equation*}
Similarly, we further have
\begin{equation}\label{asympupperbdofrsq}
\begin{split}
r^2(u,v)=&r^2(u,0)+\int_0^v \partial_v (r^2(u, v'))dv'=r^2(u,0)+\int_0^v (2r\partial_v r) (u,v')dv'\\
=&[-2C_1+o(1)]u+\int_0^v [-2C_2+o(1)]dv'\\
=&[-2C_1+o(1)]u+[-2C_2+o(1)]v.
\end{split}
\end{equation}
In particular, at $Q$, where $u=\tilde{U}, v=\tilde{V}$, it holds
$$[-2C_1+o(1)]\tilde{U}+[-2C_2+o(1)]\tilde{V}=r^2_0.$$
This implies
$$|\tilde{U}|\leq \f{r^2_0}{C_1}, \quad \quad \quad \quad |\tilde{V}|\leq \f{r^2_0}{C_2}.$$
Along $AP$, we use equation
$$\partial_v (r\partial_u r)=-\f14\O^2$$
and taking (\ref{Omega upper}) into account, we obtain
\begin{equation}\label{PA}
\begin{split}
|(r\partial_u r)(P)-(r\partial_u r)(A)|=&\int_{V'}^0\f14\O^2(0,v')dv'\lesssim \int_{V'}^0\f{D}{r(0,v')}dv'\\
\approx& \int_{V'}^0 -\partial_v r(0,v')dv' = r(A)=r_0^{\f{1}{100}}.
\end{split}
\end{equation}
Along $BA$, from (\ref{eqn u}) we have
$$\partial_u(\partial_u r)=-r(\partial_u \phi)^2+2\partial_u \log \O\cdot \partial_u r.$$
Hence
\begin{equation}\label{ruu}
\begin{split}
\partial_u (r\partial_u r)=&r\partial_u (\partial_u r)+\partial_u r\cdot \partial_u r\\
=&-r^2(\partial_u \phi)^2+2\partial_u \log\O\cdot r\cdot \partial_u r+\partial_u r \partial_u r
\end{split}
\end{equation}
Using Proposition \ref{preliminary}, we have $r^2(\partial_u \phi)^2+|r^2\partial_u\phi\partial_v \phi|\leq {I^2_0}/{r^{4+2\a}}.$
Proposition \ref{Prop 4.1} gives $|\partial_u r|, |\partial_v r|\lesssim 1/r$. Hence via (\ref{eqn Omega}):
$$r^2\partial_u \partial_v \log \O=\partial_u r \partial_v r+\f14\O^2-r^2 \partial_u \phi \partial_v \phi$$
and (\ref{Omega upper}), taking (\ref{asympupperbdofrsq}) into account and integrating (\ref{eqn Omega}) we obtain $|\partial_u\log\O|\leq I^2_0/r^{4+2\a}$. With these estimates, we bound the RHS of (\ref{ruu}) and obtain
\begin{equation}\label{AB}
\begin{split}
|(r\partial_u r)(A)-(r\partial_u r)(B)|\leq& \int_{\tilde{U}}^0 \f{I^2_0}{r^{{\color{black} 4}+2\a}}(u',V')du'\lesssim \f{1}{r^{4+2\a}(A)}\cdot |\tilde{U}|\\
\leq&\f{1}{r_0^{\f{4+2\a}{100}}}\cdot \f{r_0^2}{C_1}\leq r_0.
\end{split}
\end{equation}
Combining (\ref{PA}) and (\ref{AB}), we get
\begin{equation}\label{PB}
|(r\partial_u r)(P)-(r\partial_u r)(B)|\leq r_0^{\f{1}{100}}.
\end{equation}
Along $AB$, using $r\partial_u r \sim \text{const}$ we have
$$|\partial_u r|\lesssim \f{1}{r(A)}=\f{1}{r_0^{ \f{1}{100}}}.$$
Thus, it follows
$$|r(B)-r(A)|\leq \int_{\tilde{U}}^0 |\partial_u r(u', V')|du'\leq \f{1}{r_0^{\f{1}{100}}}\cdot \f{r_0^2}{C_1}\leq r_0.$$
Hence, for $r(B)$ we have
$$r_0^{\f{1}{100}}\leq r(A) \leq r(B) \leq r(A)+r_0\leq 2 r_0^{\f{1}{100}}.$$
Lastly, along $BQ$, we use equation $\partial_v (r\partial_u r)=-\f{1}{4}\O^2$
and obtain
\begin{equation}\label{BQ}
\begin{split}
|(r\partial_u r)(B)-(r\partial_u r)(Q)|=&\int_{V'}^{\tilde{V}}\f14\O^2(\tilde{U},v')dv'\lesssim \int_{V'}^{\tilde{V}}\f{D}{r(\tilde{U},v')}dv'\\
\approx& \int_{V'}^{\tilde{V}} -\partial_v r(\tilde{U},v')dv' = r(B)-r(Q)\leq 2r_0^{\f{1}{100}}.
\end{split}
\end{equation}
Combining (\ref{PB}) and (\ref{BQ}), we then obtain
\begin{equation}\label{PQ}
|(r\partial_u r)(P)-(r\partial_u r)(Q)|\leq 2r_0^{\f{1}{100}}.
\end{equation}
This gives
$$|r\partial_u r+C_1|(Q)\leq 2r(Q)^{\f{1}{100}}.$$
Similarly, by using $A'$ and $B'$, we have
$$|r\partial_v r+C_2|(Q)\leq 2r(Q)^{\f{1}{100}}.$$
Note that this conclusion holds for all the points $Q$ sufficiently close to $P$.
\end{proof}

\section{Sharp Estimates for $\partial_u \phi$ and $\partial_v \phi$}\label{section sharp phi}

We are now ready to prove:

\noindent Theorem \ref{prop1.2}.
Under the same assumptions as in Theorem \ref{thm1.1}, at any point {\color{black}$(u,v)\in\mathcal{T}$ near $\mathcal{B}$}, there exists positive number $D_1$ and $D_2$ (depending on the initial data), such that
$$|\partial_u \phi(u,v)|\leq \f{D_1}{r(u,v)^2}, \quad \quad |\partial_v \phi(u,v)|\leq \f{D_2}{r(u,v)^2}.$$

\begin{proof}
We consider the spacetime region (in $D_0$) below. Fix $l \gg 1$ so that the entire figure below is in our region of interest. Let $n \gg l$ be arbitrary.

\begin{minipage}[!t]{0.4\textwidth}
\begin{tikzpicture}[scale=0.7]
\draw [white](4, 1)-- node[midway, sloped, above,black]{$r(u,v)=0$}(3, 1);
\draw [white](-2, 1)-- node[midway, sloped, above,black]{$P$}(2, 1);
\draw [white](-2, 0.1)-- node[midway, sloped, above,black]{$P_n$}(2, 0.1);
\draw [white](-2.51, -3)-- node[midway, sloped, below,black]{$O_{l+1}$}(-2.5, -3);
\draw [white](-6, -5.2)-- node[midway, sloped, below,black]{$O_l$}(-4, -5.2);
\draw [white](-11, -10.5)-- node[midway, sloped, below,black]{$O_{l-1}$}(-9, -10.5);
\draw [white](-1.26, -1.63)-- node[midway, sloped, below,black]{$O_{l+2}$}(-1.25, -1.63);
\draw [white](2.51, -3)-- node[midway, sloped, below,black]{$Q_{l+1}$}(2.5, -3);
\draw [white](6, -5.2)-- node[midway, sloped, below,black]{$Q_l$}(4, -5.2);
\draw [white](11, -10.5)-- node[midway, sloped, below,black]{$Q_{l-1}$}(9, -10.5);
\draw [white](1.28, -1.57)-- node[midway, sloped, below,black]{$Q_{l+2}$}(1.25, -1.57);
\draw [white](4, 0)-- node[midway, sloped, above,black]{$r(u,v)=1/2^n$}(3, 0);

\draw [white](-5, -5)-- node[midway, sloped, above, black]{$u=U_0=0$}(-10,-10);
\draw [white](5, -5)-- node[midway, sloped, above, black]{$v=V_0=0$}(10,-10);
\draw[thick] (0,0)--(-10,-10);
\draw[thick] (0,0)--(10, -10);
\draw [thick] (-3,0) to [out=-10,in=170] (0,0);
\draw [thick] (0,0) to [out=-10,in=170] (3,0);
\draw [thick] (-3,1) to [out=-10,in=170] (0,1);
\draw [thick] (0,1) to [out=-10,in=170] (3,1);
\draw [white](-10, -10)-- node[midway, sloped, below,black]{$r=\f{1}{2^{l-1}}$}(10, -10);
\draw [white](-5, -5)-- node[midway, sloped, below,black]{$r=\f{1}{2^{l}}$}(5, -5);
\draw [white](-2.5, -2.5)-- node[midway, sloped, below,black]{$r=\f{1}{2^{l+1}}$}(2.5, -2.5);
\draw[thick] (-10, -10) to [out=-10, in=170] (10, -10);
\draw[thick] (-5, -5) to [out=-10, in=170] (5, -5);
\draw[thick](-2.5, -2.5) to [out=-10, in=170] (2.5, -2.5);
\draw[fill] (0,0) circle [radius=0.15];
\draw[fill] (0,1) circle [radius=0.15];
\draw[fill] (0,0) circle [radius=0.15];
\draw[fill] (-10,-10) circle [radius=0.15];
\draw[fill] (-5,-5) circle [radius=0.15];
\draw[fill] (-2.5,-2.5) circle [radius=0.15];
\draw[fill] (-1.25,-1.25) circle [radius=0.15];
\draw[fill] (-0.7,-0.7) circle [radius=0.15];
\draw[fill] (-0.3,-0.3) circle [radius=0.15];
\draw[fill] (10,-10) circle [radius=0.15];
\draw[fill] (5,-5) circle [radius=0.15];
\draw[fill] (2.5,-2.5) circle [radius=0.15];
\draw[fill] (1.25,-1.25) circle [radius=0.15];
\draw[fill] (0.7,-0.7) circle [radius=0.15];
\draw[fill] (0.3,-0.3) circle [radius=0.15];
\end{tikzpicture}
\end{minipage}
\hspace{0.05\textwidth}
\begin{minipage}[!t]{0.6\textwidth}
\end{minipage}

\noindent In $D_0$, we consider different constant $r${\color{green}-}level sets $\{L_r\}$. Let
$$\Psi(r)=\max\{\sup_{P\in L_r} |C_2\cdot r\partial_u \phi|(P), \sup_{Q\in L_r} |C_1\cdot r\partial_v \phi|(Q)\}.$$
At $P$, we have
$$-r\partial_u r(P)=C_1>0, \quad -r\partial_v r(P)=C_2>0.$$
Then from (\ref{eqn phi}), i.e.
$$\partial_{u}(r\partial_v \phi)=-r_v \partial_u\phi$$
we have
\begin{equation}
\begin{split}
|C_1\cdot r\partial_v \phi|(P_n)\leq& I.D.+ \int_{u(Q_{l-1})}^{u(P_n)} -r_v |C_1\cdot\partial_u\phi| \, du\\
\leq& I.D.+ \int_{u(Q_{l-1})}^{u(P_n)} -r_u\cdot \f{r \partial_v r}{r \partial_u r} |C_1\cdot\partial_u\phi| \, du\\
=& I.D. +\int_{r(Q_{l-1})}^{r(P_n)} -\f{r \partial_v r}{r\partial_u r}\cdot\f{C_1}{C_2}\cdot\f{1}{r}\cdot |C_2\cdot r\partial_u\phi| \, dr\\
=& I.D. +\int_{r(Q_{l-1})}^{r(P_n)} -\f{1+O(r^{\f{1}{100}})}{r}\cdot |C_2\cdot r\partial_u\phi| \, dr \quad \quad (\mbox{use Proposition }\ref{improved ru rv})
\end{split}
\end{equation}
Similarly, we have
$$|C_2\cdot r\partial_u \phi|(P_n)\leq I.D. +\int_{r(O_{l-1})}^{r(P_n)} -\f{1+O(r^{\f{1}{100}})}{r}\cdot |C_1\cdot r\partial_v\phi| \, dr.$$
Combining these two inequality together, we have
$$\Psi(2^{-n})\leq I.D.+\int_{r=2^{-l+1}}^{r=2^{-n}} -\f{1+O(r^{\f{1}{100}})}{r}\cdot \Psi(r) \, dr.$$
Here $2^{-n}$ could be replaced by any small positive number. Hence it is true that for any small enough $\tilde{r}>0$
$$\Psi(\tilde{r})\leq I.D.+\int_{2^{-l+1}}^{\tilde{r}} -\f{1+O(r^{\f{1}{100}})}{r}\cdot \Psi(r) \, dr=I.D.+\int^{2^{-l+1}}_{\tilde{r}} \f{1+O(r^{\f{1}{100}})}{r}\cdot \Psi(r) \, dr$$
{\color{black}By} Gr\"onwall's inequality, we have
$$\Psi(\tilde{r})\leq I.D.\times e^{\int_{\tilde{r}}^{2^{-l+1}} \f{1+O(r^{\f{1}{100}})}{r}\cdot} dr=I.D.\times e^{-\ln \tilde{r}+O(1)}\leq\f{C}{\tilde{r}},$$
where $C$ is a uniform number depending on initial data. This gives
$$\tilde{r}\Psi(\tilde{r})\leq C \mbox{ for any } \tilde{r}>0,$$
which further implies
$$r^2|\partial_u \phi|\leq D_1, \quad r^2|\partial_v \phi|\leq D_2$$
for any $r\geq 0$, where $D_1, D_2$ are uniform numbers depending only on initial data.
\end{proof}

\section{Higher Order Estimates}
For the purpose of future use, we first state several useful estimates.
\begin{proposition}\label{1stderivofOmega}
For $\O^2(u,v)$, we have
$$|\O^2(u,v)|\lesssim \f{1}{r(u,v)}, \quad |\partial_u\log\O|(u,v)\lesssim\f{1}{r^2(u,v)}, \quad  |\partial_v\log\O|(u,v)\lesssim\f{1}{r^2(u,v)}.$$
$$|\partial_u (\O^2(u,v))|\lesssim \f{1}{r^3(u,v)}, \quad |\partial_v(\O^2(u,v))(u,v)|\lesssim \f{1}{r^3(u,v)}.$$

\end{proposition}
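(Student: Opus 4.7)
The first bound $|\Omega^2(u,v)|\lesssim 1/r(u,v)$ is exactly (\ref{Omega upper}), so nothing new is needed for it. For the remaining bounds the plan is to use the wave-type equation (\ref{eqn Omega}) to first control the mixed second derivative $\partial_u\partial_v\log\Omega$ pointwise, then integrate once in a null direction to obtain the single-derivative bounds on $\partial_u\log\Omega$ and $\partial_v\log\Omega$, and finally use the chain rule identity $\partial_u(\Omega^2)=2\Omega^2\,\partial_u\log\Omega$ to conclude.

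For the mixed-derivative bound, I would rewrite (\ref{eqn Omega}) as
$$\partial_u\partial_v\log\Omega=\frac{\partial_u r\,\partial_v r}{r^2}+\frac{\Omega^2}{4r^2}-\partial_u\phi\,\partial_v\phi.$$
By Proposition \ref{Prop 4.1} and continuity on $D_0$, $|r\partial_u r|,|r\partial_v r|\lesssim 1$, so the first term on the right is $O(1/r^4)$; the bound $\Omega^2\lesssim 1/r$ makes the second term $O(1/r^3)$, which is strictly subdominant near $\mathcal{B}$; and Theorem \ref{prop1.2} gives $|\partial_u\phi\,\partial_v\phi|\lesssim 1/r^4$. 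Taking the maximum I obtain $|\partial_u\partial_v\log\Omega|(u,v)\lesssim 1/r(u,v)^4$.

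For the single-derivative bounds, fix $v$ and pick a reference slice $u=u_1$ on the past boundary of $D_0$, where $r$ is bounded below so that $|\partial_v\log\Omega(u_1,v)|$ is uniformly controlled by the initial data. Writing
$$\partial_v\log\Omega(u,v)=\partial_v\log\Omega(u_1,v)+\int_{u_1}^{u}\partial_u\partial_v\log\Omega(u',v)\,du'$$
and changing variables from $u'$ to $r$ using $\partial_u r\sim -C_1/r$ from Proposition \ref{Prop 4.1}, the integral is controlled by
$$\int_{r(u,v)}^{r(u_1,v)}\frac{dr}{C_1\,r^3}\lesssim\frac{1}{r(u,v)^2}.$$
Hence $|\partial_v\log\Omega(u,v)|\lesssim 1/r(u,v)^2$, and the symmetric integration in $v$ using $\partial_v r\sim -C_2/r$ yields $|\partial_u\log\Omega(u,v)|\lesssim 1/r(u,v)^2$. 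The chain rule then gives $|\partial_u(\Omega^2)|\lesssim\Omega^2/r^2\lesssim 1/r^3$, and similarly for $\partial_v(\Omega^2)$.

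The main difficulty is in fact already packaged into the hypothesis: without the sharp polynomial rate $|\partial_u\phi|,|\partial_v\phi|\lesssim 1/r^2$ from Theorem \ref{prop1.2}, the cubic term $\partial_u\phi\,\partial_v\phi$ in the wave equation would dominate and push $|\partial_u\partial_v\log\Omega|$ above $1/r^4$, and the subsequent null-integration would fail to produce a polynomial bound (exactly as illustrated in the ``New Ingredients'' heuristic with the exponent $2+\epsilon$). Given Theorem \ref{prop1.2} and Proposition \ref{Prop 4.1}, the present proposition reduces to elementary integration along the two null directions.
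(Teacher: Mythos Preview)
Your proposal is correct and follows essentially the same approach as the paper: both bound $\partial_u\partial_v\log\Omega$ pointwise via (\ref{eqn Omega}) using Theorem \ref{prop1.2}, Proposition \ref{Prop 4.1}, and (\ref{Omega upper}), then integrate once in a null direction (you integrate in $u$ with the change of variables $du'\to dr$ made explicit, the paper integrates in $v$), and finally use $\partial(\Omega^2)=\Omega^2\,\partial\log(\Omega^2)$ together with $\Omega^2\lesssim 1/r$ to obtain the $1/r^3$ bounds.
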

\begin{proof}
For $\O^2(u,v)$, we already proved $|\O^2(u,v)|\ls {1}/{r(u,v)}$ in (\ref{Omega upper}). From (\ref{eqn Omega}) we have
$$\partial_v(\f{1}{\O^2}\partial_u(\O^2))=\partial_v \partial_u \log (\O^2)=\f{1}{2r^2}(\partial_u r \partial_v r+\f14\O^2-r^2\partial_u \phi \partial_v \phi).$$
This implies
\begin{equation*}
\begin{split}
&\f{1}{\O^2}\partial_u(\O^2)(u,v)=\partial_u\log (\O^2)(u,v)\\
=&\f{1}{\O^2}\partial_u(\O^2)(u,v_0)+\int_{v_0}^v \f{1}{2r^2}\big(\partial_u r \partial_v r+\f14\O^2-r^2\partial_u \phi \partial_v \phi\big)(u,v')dv'\\
\lesssim&I.D.+\f{1}{r^2(u,v)}.
\end{split}
\end{equation*}
In the above inequality chain we used Theorem \ref{prop1.2} and (\ref{Omega upper}). Note that the above $I.D.$ is uniformly bounded.
Together with $\O^2(u,v)\lesssim\f{1}{r(u,v)}$ by (\ref{Omega upper}), we conclude
$$|\partial_u \O^2(u,v)|{\color{black}\lesssim} \O^2(u,v)[I.D.+\f{1}{r^2(u,v)}]{\color{black}\lesssim} \f{1}{r^3(u,v)}.$$
Similarly, we have
$$|\partial_v \O^2(u,v)|{\color{black}\lesssim} \f{1}{r^3(u,v)}.$$
\end{proof}

\begin{proposition}\label{2ndorderr}
For $r(u,v)$, we have
$$|\partial_u \partial_v {\color{black}(r^2)}|{\color{black}\lesssim \f{1}{r}\lesssim} \f{1}{r^2}, \quad|\partial_u \partial_u {\color{black}(r^2)}|{\color{black}\lesssim} \f{1}{r^2}, \quad |\partial_v \partial_v {\color{black}(r^2)}|{\color{black}\lesssim} \f{1}{r^2},$$
$$|\partial_u \partial_v r|{\color{black}\lesssim} \f{1}{r^3}, \quad|\partial_u \partial_u r|{\color{black}\lesssim} \f{1}{r^3}, \quad |\partial_v \partial_v r|{\color{black}\lesssim} \f{1}{r^3}.$$
\end{proposition}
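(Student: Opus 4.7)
The plan is to obtain all six bounds by differentiating the structural equations (\ref{eqn r}), (\ref{eqn u}), (\ref{eqn v}) and plugging in the sharp first-order estimates already established: $|\partial_u r|, |\partial_v r|\lesssim 1/r$ from Proposition~\ref{Prop 4.1}, $\Omega^{2}\lesssim 1/r$ from (\ref{Omega upper}), $|\partial_u\log\Omega|,|\partial_v\log\Omega|\lesssim 1/r^{2}$ from Proposition~\ref{1stderivofOmega}, and $|\partial_u\phi|,|\partial_v\phi|\lesssim 1/r^{2}$ from Theorem~\ref{prop1.2}.

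First I would treat $\partial_u\partial_v r$ by solving (\ref{eqn r}) algebraically:
$$\partial_u\partial_v r=-\frac{\partial_u r\,\partial_v r+\tfrac{1}{4}\Omega^{2}}{r}.$$
Since $|\partial_u r\,\partial_v r|\lesssim 1/r^{2}$ and $|\Omega^{2}|\lesssim 1/r$, the numerator is $\lesssim 1/r^{2}$, yielding $|\partial_u\partial_v r|\lesssim 1/r^{3}$. Next, for the pure second derivatives, I would expand (\ref{eqn u}) by the product rule,
$$\partial_u\partial_u r=2\,\partial_u\log\Omega\cdot\partial_u r-r(\partial_u\phi)^{2},$$
and estimate each term: $|\partial_u\log\Omega\cdot\partial_u r|\lesssim 1/r^{2}\cdot 1/r=1/r^{3}$ and $r(\partial_u\phi)^{2}\lesssim r\cdot 1/r^{4}=1/r^{3}$. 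The identical argument applied to (\ref{eqn v}) controls $\partial_v\partial_v r$.

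For the bounds on $r^{2}$, I would compute starting from $\partial_u(r^{2})=2r\,\partial_u r$ and $\partial_v(r^{2})=2r\,\partial_v r$. The crucial observation is a cancellation in the mixed derivative: using (\ref{eqn r}),
$$\partial_u\partial_v(r^{2})=2\,\partial_u r\,\partial_v r+2r\,\partial_u\partial_v r=2\,\partial_u r\,\partial_v r+2\bigl(-\partial_u r\,\partial_v r-\tfrac{1}{4}\Omega^{2}\bigr)=-\tfrac{1}{2}\Omega^{2},$$
which gives the sharp $|\partial_u\partial_v(r^{2})|\lesssim 1/r$ (and hence $\lesssim 1/r^{2}$ trivially as $r$ is small). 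For the pure derivatives, $\partial_u\partial_u(r^{2})=2(\partial_u r)^{2}+2r\,\partial_u\partial_u r$, where $(\partial_u r)^{2}\lesssim 1/r^{2}$ and $r\,\partial_u\partial_u r\lesssim r\cdot 1/r^{3}=1/r^{2}$; the bound $|\partial_v\partial_v(r^{2})|\lesssim 1/r^{2}$ follows by symmetry.

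Since every second derivative of $r$ can be rewritten algebraically via the structural equations in terms of already-bounded quantities, there is no genuine obstacle in the proof. The one point that requires a little attention is the cancellation in the mixed derivative $\partial_u\partial_v(r^{2})=-\tfrac{1}{2}\Omega^{2}$, which produces the sharper $1/r$ rate stated in the proposition and which will be convenient later when one wants a refined control for that particular quantity.
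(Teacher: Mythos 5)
Your proposal is correct and follows essentially the same route as the paper: both rewrite the second derivatives via the structural equations (\ref{eqn r}), (\ref{eqn u}), (\ref{eqn v}) and insert the already-established bounds $|\partial_u r|,|\partial_v r|\lesssim 1/r$, $\Omega^2\lesssim 1/r$, $|\partial_u\log\Omega|,|\partial_v\log\Omega|\lesssim 1/r^2$ and $|\partial_u\phi|,|\partial_v\phi|\lesssim 1/r^2$, including the same cancellation $\partial_u\partial_v(r^2)=-\tfrac12\Omega^2$. The only cosmetic difference is the order of deduction (the paper passes through the $r^2$ identities first and then reads off the bounds on $\partial^2 r$, while you do the reverse), which changes nothing of substance.
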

\begin{proof}
For $\partial_u(\partial_v {\color{black}(r^2)})$, we have {\color{black}from (\ref{eqn r}):}
$$\partial_u(\partial_v {\color{black}(r^2)})=-\f12\O^2,$$
the desired estimate follows from the derived estimate $\O^2\ls1/r\ls 1/r^2$.

For $\partial_u(\partial_u {\color{black}(r^2)})$, we first recall {\color{black}from (\ref{eqn u}):}
$$-2\partial_u \log\O\cdot \partial_u r+\partial_u(\partial_u r)=-r(\partial_u\phi)^2.$$
This implies
$$-4r\partial_u \log\O\cdot \partial_u r+\partial_u(2r\partial_u r)-2\partial_u r\partial_u r=-2r^2(\partial_u\phi)^2,$$
that is
$$\partial_u(\partial_u {\color{black}(r^2)})=4r\partial_u \log\O\cdot \partial_u r+2\partial_u r\partial_u r-2r^2(\partial_u\phi)^2.$$
By the estimates {\color{black}in Proposition \ref{1stderivofOmega}, (\ref{Omega upper}) and Proposition \ref{prop1.2}}, we have
$$|\partial_u(\partial_u r^2)|{\color{black}\lesssim} \f{1}{r^2}.$$
Similarly, we also have
$$|\partial_v(\partial_v r^2)|{\color{black}\lesssim} \f{1}{r^2}.$$
{\color{black}Hence we have every inequality in the second line of the statement.}
\end{proof}

\begin{proposition}\label{2ndderivofphi}
For $\phi(u,v)$, we have
$$|\partial_u \partial_v \phi|{\color{black}\lesssim} \f{1}{r^4}, \quad |\partial_u \partial_u \phi|{\color{black}\lesssim} \f{1}{r^4}, \quad |\partial_v \partial_v \phi|{\color{black}\lesssim} \f{1}{r^4}.$$
\end{proposition}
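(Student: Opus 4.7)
The plan is to derive each of the three second-order bounds from the wave equation (\ref{eqn phi}), combined with the first-order estimates already in hand (Proposition \ref{Prop 4.1}, Theorem \ref{prop1.2}, Propositions \ref{1stderivofOmega} and \ref{2ndorderr}). The idea is that the mixed derivative is controlled algebraically, while the two pure second derivatives require rewriting a differentiated form of (\ref{eqn phi}) as a null transport equation and then integrating.

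The bound on $\partial_u\partial_v\phi$ is immediate: solving (\ref{eqn phi}) for the mixed derivative gives $\partial_u\partial_v\phi = -(\partial_u r\,\partial_v\phi+\partial_v r\,\partial_u\phi)/r$, and substituting $|\partial_u r|,|\partial_v r|\lesssim 1/r$ together with $|\partial_u\phi|,|\partial_v\phi|\lesssim 1/r^2$ yields $|\partial_u\partial_v\phi|\lesssim 1/r^4$.

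For $\partial_u^2\phi$, I will differentiate the wave equation once more in $u$ and recast the result as a null transport equation for $r\,\partial_u^2\phi$ in the $v$-direction. Applying $\partial_u$ to (\ref{eqn phi}) and combining the $\partial_v r\,\partial_u^2\phi$ term with $r\,\partial_u^2\partial_v\phi$ gives
\[
\partial_v(r\,\partial_u^2\phi) = -2\,\partial_u r\,\partial_u\partial_v\phi - \partial_u^2 r\,\partial_v\phi - \partial_u\partial_v r\,\partial_u\phi.
\]
Each term on the right is $\lesssim 1/r^5$ by Proposition \ref{2ndorderr}, Theorem \ref{prop1.2}, and the mixed-derivative bound just established. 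Integrating in $v$ from the initial incoming cone (where $r$ is bounded below, so $r\,\partial_u^2\phi$ is directly controlled by initial data) and using the change of variables $dv\approx -r\,dr/C_2$ supplied by Proposition \ref{Prop 4.1} converts $\int 1/r^5\,dv$ into $\int 1/r^4\,dr\lesssim 1/r^3$. This yields $|r\,\partial_u^2\phi|\lesssim 1/r^3$, hence $|\partial_u^2\phi|\lesssim 1/r^4$. The bound on $\partial_v^2\phi$ follows by the symmetric argument: differentiate (\ref{eqn phi}) in $v$ and integrate in $u$, with the analogous substitution $du\approx -r\,dr/C_1$.

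The main obstacle is the bookkeeping in the commutator step: upon differentiating (\ref{eqn phi}) in $u$, one must check that combining the $\partial_u r\,\partial_u\partial_v\phi$ and $\partial_v r\,\partial_u^2\phi$ terms indeed produces the clean divergence form $\partial_v(r\,\partial_u^2\phi)$ and leaves no $\partial_u^2\partial_v\phi$ or other uncontrolled quantity on the right. Once the null transport equation is in place, the only remaining subtlety is that thanks to $\partial_v r\sim -C_2/r$ from Proposition \ref{Prop 4.1}, the $v$-integral of $1/r^5$ becomes an $r$-integral of $1/r^4$, which loses exactly one power of $r$ — precisely the loss consistent with the claimed bound $|\partial_u^2\phi|\lesssim 1/r^4$.
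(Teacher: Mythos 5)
Your proof is correct and follows essentially the same route as the paper: the mixed derivative is read off algebraically from (\ref{eqn phi}), and the pure second derivatives are obtained by differentiating (\ref{eqn phi}), rewriting the result as the transport equation $\partial_v(r\,\partial_u^2\phi)=-\partial_u^2 r\,\partial_v\phi-\partial_u\partial_v r\,\partial_u\phi-2\partial_u r\,\partial_u\partial_v\phi$, and integrating in $v$. Your power counting (right-hand side $\lesssim 1/r^5$, with the $v$-integral converting to $\int r^{-4}\,dr\lesssim r^{-3}$ via $\partial_v r\sim -C_2/r$) is in fact slightly more careful than the paper's, which states the right-hand side is $\lesssim 1/r^4$ but arrives at the same conclusion $|r\,\partial_u^2\phi|\lesssim 1/r^3$.
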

\begin{proof}
The first estimate follows from {\color{black}(\ref{eqn phi}):}
$$r\partial_u\partial_v \phi=-\partial_u r\partial_v \phi-\partial_v r\partial_u\phi$$
{\color{black}and Proposition \ref{prop1.2}.}
We then differentiate the above equation with respect to $u$. Rewrite it. We then get
$$\partial_v (r\partial^2_{uu}\phi)=-\partial^2_{uu}r\cdot\partial_v\phi-\partial^2_{uv}r\cdot\partial_u\phi-{\color{black}2\partial_u r \partial^2_{uv}\phi}.$$
{\color{black}By previous bounds, the right hand side of the above inequality is $\lesssim \f{1}{r^4}$.}
Integrate both sides with respect to $v$, we derive
$$|r\partial^2_{uu}\phi|\leq \f{1}{r^3},$$
which implies
$$|\partial^2_{uu}\phi|\leq \f{1}{r^4}.$$
{\color{black}We similarly have the other desired estimate on $\partial^2_{vv}\phi$.}
\end{proof}

\begin{proposition}\label{2ndderivofOmega}
For $\log{\color{black}(\O^2)}(u,v)$ we have
$$|\partial_u\partial_v \log{\color{black}(\O^2)}|{\color{black}\lesssim} \f{1}{r^4}, \quad |\partial_u\partial_u \log{\color{black}(\O^2)}|{\color{black}\lesssim} \f{1}{r^4}, \quad |\partial_v\partial_v \log{\color{black}(\O^2)}|{\color{black}\lesssim} \f{1}{r^4},$$
$$|\partial_v\partial_u {\color{black}(\O^2)}|{\color{black}\lesssim}\f{1}{r^5}, \quad |\partial_u \partial_u {\color{black}(\O^2)}|{\color{black}\lesssim} \f{1}{r^5}, \quad |\partial_v \partial_v {\color{black}(\O^2)}|{\color{black}\lesssim} \f{1}{r^5}.$$
\end{proposition}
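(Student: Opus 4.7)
The plan is to derive every bound in the statement from equation (\ref{eqn Omega}), together with the estimates already established in Propositions \ref{1stderivofOmega}, \ref{2ndorderr}, \ref{2ndderivofphi} and Theorem \ref{prop1.2}. The mixed logarithmic bound is immediate: rewriting (\ref{eqn Omega}) as
$$\partial_u\partial_v\log(\O^2)=\f{2}{r^2}\l\partial_u r\,\partial_v r+\f14\O^2-r^2\partial_u\phi\,\partial_v\phi\r,$$
and inserting $|\partial_u r|,|\partial_v r|\lesssim 1/r$, $\O^2\lesssim 1/r$ and $|\partial_u\phi|,|\partial_v\phi|\lesssim 1/r^2$, the bracket is of size $1/r^2$ (the dominant piece being $\partial_u r\,\partial_v r$), so $|\partial_u\partial_v\log(\O^2)|\lesssim 1/r^4$.

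For the pure second derivative $\partial_u^2\log(\O^2)$, I differentiate (\ref{eqn Omega}) in $u$ and rearrange to get
$$\partial_u^2\partial_v\log\O = -\f{2\partial_u r}{r}\cdot\partial_u\partial_v\log\O+\f{1}{r^2}\partial_u\l\partial_u r\,\partial_v r+\f14\O^2-r^2\partial_u\phi\,\partial_v\phi\r.$$
Expanding the derivative and using $|\partial_u^2 r|,|\partial_u\partial_v r|\lesssim 1/r^3$ from Proposition \ref{2ndorderr}, $|\partial_u(\O^2)|\lesssim 1/r^3$ from Proposition \ref{1stderivofOmega}, and $|\partial_u^2\phi|,|\partial_u\partial_v\phi|\lesssim 1/r^4$ from Proposition \ref{2ndderivofphi}, every term on the right is of size at most $1/r^6$, hence $|\partial_u^2\partial_v\log\O|\lesssim 1/r^6$. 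I then integrate in $v$ from a reference surface $v_0$ on which $r(u,v_0)$ is bounded below; using the null-foliation relation $|dv|\sim r\,|dr|$ coming from $r\partial_v r\approx -C_2$, this gives
$$|\partial_u^2\log(\O^2)(u,v)|\lesssim \mbox{(initial data)}+\int_{v_0}^{v}\f{dv'}{r(u,v')^6}\lesssim \int_{r(u,v)}^{r(u,v_0)}\f{dr}{r^5}\lesssim \f{1}{r(u,v)^4}.$$
The bound $|\partial_v^2\log(\O^2)|\lesssim 1/r^4$ is obtained symmetrically by differentiating in $v$ and integrating in $u$, using $r\partial_u r\approx -C_1$ in place of $r\partial_v r\approx -C_2$.

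For the three $\O^2$-estimates, I apply the Leibniz identity
$$\partial_i\partial_j(\O^2)=2\partial_i(\O^2)\,\partial_j\log\O+2\O^2\,\partial_i\partial_j\log\O,\qquad i,j\in\{u,v\}.$$
With $|\O^2|\lesssim 1/r$, $|\partial_i(\O^2)|\lesssim 1/r^3$, $|\partial_j\log\O|\lesssim 1/r^2$ and the just-established $|\partial_i\partial_j\log(\O^2)|\lesssim 1/r^4$, each summand is of size $1/r^5$, giving the claimed rate. The main obstacle in the whole argument is the pure-second-derivative log estimate: the integrand $|\partial_u^2\partial_v\log\O|$ is only of size $1/r^6$, which naively suggests losing a power under integration. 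The crucial geometric input that restores the sharp $1/r^4$ rate is the null-foliation volume element $|dv|\sim r\,|dr|$, which converts $\int r^{-6}\,dv$ into $\int r^{-5}\,dr\sim r^{-4}$, exactly matching the claimed bound.
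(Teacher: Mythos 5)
Your proposal is correct and follows essentially the same route as the paper: bound $\partial_u\partial_v\log(\O^2)$ directly from (\ref{eqn Omega}), differentiate that equation in $u$ (resp.\ $v$) and integrate in $v$ (resp.\ $u$) using $dv\sim r\,dr$ to get the pure second derivatives of $\log(\O^2)$, then convert to $\partial_i\partial_j(\O^2)$ via the Leibniz identity $\partial_i\partial_j(\O^2)=\partial_i(\O^2)\partial_j\log(\O^2)+\O^2\partial_i\partial_j\log(\O^2)$. You have merely spelled out the $1/r^6$ integrand count and the change of variables that the paper leaves implicit.
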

\begin{proof}
This first estimate is easily obtained from
$$\partial_v \partial_u \log {\color{black}(\O^2)}=\f{1}{2r^2}(\partial_u r \partial_v r+\f14\O^2-r^2\partial_u \phi \partial_v \phi).$$
Differentiate this equation with respect to $u$ and integrate {\color{black}the result} with respect to $v$, with the help of derived estimates, we arrive at
$$|\partial_u\partial_u \log{\color{black}(\O^2)}|{\color{black}\lesssim} \f{1}{r^4}.$$
Since
$$\f{\partial_u\partial_u {\color{black}(\O^2)}}{\O^2}-\f{\partial_u {\color{black}(\O^2)}\cdot \partial_u \log{\color{black}(\O^2)}}{\O^2}  =\partial_u \f{\partial_u {\color{black}(\O^2)}}{\O^2}=\partial_u\partial_u\log{\color{black}(\O^2)}.$$
That is
$${\partial_u\partial_u {\color{black}(\O^2)}}=\partial_u {\color{black}(\O^2)}\cdot \partial_u \log{\color{black}(\O^2)}+\O^2\cdot\partial_u\partial_u\log{\color{black}(\O^2)}.$$
By the estimates derived above, we have
$$|\partial_u \partial_u {\color{black}(\O^2)}|{\color{black}\lesssim} \f{1}{r^5}.$$
Similarly, we also have
$$|\partial_v\partial_v \log{\color{black}(\O^2)}|{\color{black}\lesssim} \f{1}{r^4}, \mbox{ and } |\partial_v \partial_v {\color{black}(\O^2)}|{\color{black}\lesssim} \f{1}{r^5}.$$
From
$$\f{\partial_v\partial_u{\color{black}(\O^2)}}{\O^2}-\f{\partial_u {\color{black}(\O^2)}\cdot \partial_v \log{\color{black}(\O^2)}}{\O^2}=\partial_v \f{\partial_u \O^2}{\O^2}=\partial_v\partial_u \log{\color{black}(\O^2)},$$
we have
$${\partial_v\partial_u\O^2}={\partial_u \O^2\cdot \partial_v \log\O^2}+\O^2\cdot\partial_v\partial_u \log\O^2.$$
With derived estimates, we have
$$|\partial_v\partial_u {\color{black}(\O^2)}|{\color{black}\lesssim} \f{1}{r^5}.$$

\end{proof}

With these estimates, in the same fashion, with equations
$$\partial_u(\partial_u {\color{black}(r^2)})=4r\partial_u \log\O\cdot \partial_u r+2\partial_u r\partial_u r-2r^2(\partial_u\phi)^2.$$
we then get
$$|\partial_u \partial_u \partial_u {\color{black}(r^2)}|{\color{black}\lesssim} \f{1}{r^4}, \quad |\partial_v \partial_u \partial_u {\color{black}(r^2)}|{\color{black}\lesssim} \f{1}{r^4}.$$
Similarly, we also have
$$|\partial_v \partial_v \partial_v {\color{black}(r^2)}|{\color{black}\lesssim} \f{1}{r^4}, \quad |\partial_u \partial_v \partial_v {\color{black}(r^2)}|{\color{black}\lesssim} \f{1}{r^4}.$$
Repeatedly, we can derive all desired estimates through the following order
\begin{equation}
\begin{split}
&\partial_u\log{\color{black}(\O^2)}, \partial_v{\color{black}(\O^2)}, \partial_u {\color{black}(\O^2)}, \partial_v {\color{black}(\O^2)}\\
\rightarrow& r_{uu}, r_{vv}, r_{uv}, (r^2)_{uu}, (r^2)_{vv}, (r^2)_{uv}\\
\rightarrow& \partial^2_{uu}\phi, \partial^2_{vv}\phi, \partial^2_{uv}\phi\\
\rightarrow& \partial_{uu}\log{\color{black}(\O^2)}, \partial_{vv}\log{\color{black}(\O^2)}, \partial_{uv}\log{\color{black}(\O^2)}, \partial_{uu} {\color{black}(\O^2)}, \partial_{vv} {\color{black}(\O^2)}, \partial_{uv} {\color{black}(\O^2)}\\
\rightarrow& r_{uuu}, r_{uuv}, r_{uvv}, r_{vvv}, (r^2)_{uuu}, (r^2)_{uuv}, (r^2)_{uvv}, (r^2)_{vvv}\\
\rightarrow& \partial^3_{uuu}\phi, \partial^3_{uuv}\phi, \partial^3_{uvv}\phi, \partial^3_{vvv}\phi\\
\rightarrow&......
\end{split}
\end{equation}
In particular, {\color{black}in $D_0$} we get
\begin{proposition}
For any $m,n\in \mathbb{N}$, we have
{\color{black}$$|(\partial_u)^m(\partial_v)^n (r^2)(u,v)|\leq \f{1}{r^{2m+2n{\color{black}-2}}(u,v)}$$
and
$$|(\partial_u)^m(\partial_v)^n \log (\Omega^2)(u,v)|\leq \f{1}{r^{2m+2n}(u,v)}$$}
\end{proposition}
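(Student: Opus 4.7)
The plan is to prove both bounds simultaneously by strong induction on the total order $k=m+n$, carrying along the auxiliary bounds
$$|(\partial_u)^m(\partial_v)^n \phi(u,v)| \lesssim \frac{1}{r(u,v)^{2(m+n)}} \quad (m+n\geq 1), \qquad |(\partial_u)^m(\partial_v)^n \Omega^2(u,v)| \lesssim \frac{1}{r(u,v)^{2(m+n)+1}}.$$
The base cases $k\leq 2$ for all four quantities are exactly the content of Theorem \ref{prop1.2} together with Propositions \ref{1stderivofOmega}, \ref{2ndorderr}, \ref{2ndderivofphi}, \ref{2ndderivofOmega}, and the excerpt already illustrates the pattern by sketching the $k=3$ step for $r^2$.

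For the inductive step, assuming the bounds at all orders $\leq K\geq 2$, I would produce the order $K+1$ bounds by cycling through $\log(\Omega^2)$ and $\Omega^2$, then $r^2$, then $\phi$, exactly in the order displayed at the end of the excerpt. For $(\partial_u)^m(\partial_v)^n(r^2)$ with $m,n\geq 1$, differentiate the identity $\partial_u\partial_v(r^2)=-\tfrac12\Omega^2$ directly; for pure derivatives, differentiate
$$\partial_u^2(r^2) = 4r\,\partial_u\log\Omega\cdot\partial_u r+2(\partial_u r)^2-2r^2(\partial_u\phi)^2$$
(and its $v$-analogue) further and apply Leibniz. For $\phi$, differentiate $r\partial_u\partial_v\phi=-\partial_u r\,\partial_v\phi-\partial_v r\,\partial_u\phi$; mixed derivatives are algebraic, while pure $u$-derivatives come from integrating in $v$, using $-r\partial_v r=C_2+O(r^{1/100})$ from Propositions \ref{Prop 4.1} and \ref{improved ru rv} to convert $dv'\approx -r/C_2\,dr$, as in the proof of Theorem \ref{prop1.2}. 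For $\log(\Omega^2)$, differentiate
$$\partial_u\partial_v\log(\Omega^2)=\frac{1}{2r^2}\Bigl(\partial_u r\partial_v r+\tfrac14\Omega^2-r^2\partial_u\phi\partial_v\phi\Bigr)$$
and integrate in the remaining variable as needed. Finally, $(\partial_u)^m(\partial_v)^n\Omega^2$ is recovered by Fa\`a di Bruno applied to $\Omega^2=e^{\log(\Omega^2)}$; the prefactor $\Omega^2\lesssim 1/r$ accounts for the extra $r^{-1}$ in the auxiliary bound.

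The technical engine is the integration step: whenever $\partial_v F=G$ with $|G(u,v')|\lesssim r(u,v')^{-p}$ and $p\geq 3$, the change of variables $dv'\approx -r/C_2\,dr$ yields
$$|F(u,v)|\lesssim \mbox{const}+\int_{r(u,v_0)}^{r(u,v)}\frac{dr}{r^{p-1}}\lesssim \mbox{const}+\frac{1}{r(u,v)^{p-2}}.$$
Since every structural equation is at most quadratic in the unknowns and each differentiation raises $m+n$ by exactly one, the exponents balance cleanly at every stage; this is precisely why the iteration closes at polynomial rates rather than degrading.

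The main obstacle is purely combinatorial: at order $K+1$, applying Leibniz to the quadratic nonlinearities produces many sub-products of derivatives of $r$, $\log\Omega$, and $\phi$, and one must verify that the worst such sub-product still matches the declared exponent. The consistency check is that in any Leibniz term the total order sums to $K+1$ and the exponents from the inductive hypothesis add correctly along factors of a product. Once this bookkeeping is carried out, the induction closes and both stated bounds follow for all $m,n\in\mathbb{N}$.
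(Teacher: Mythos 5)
Your proposal is correct and follows essentially the same route as the paper: the paper's own argument is precisely the iterative scheme you describe, cycling through $\log(\Omega^2)$, $\Omega^2$, $r$, $r^2$, $\phi$ in the displayed order, differentiating the structural equations, and integrating transversally using $-r\partial_v r=C_2+O(r^{1/100})$ to convert $dv'$ into $-r\,dr/C_2$. Your formalization as a strong induction on $m+n$ with explicit auxiliary bounds for $\phi$ and $\Omega^2$ (and Fa\`a di Bruno in place of the paper's ad hoc product-rule identities for $\partial^2\Omega^2$) is just a cleaner write-up of the same bookkeeping.
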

\noindent Recall in $D_0$, we have
$$r^2(u,v)=[-2C_1+o(1)]u+[-2C_2+o(1)]v.$$
This further implies, {\color{black} in $D_0$ when it is close to a singular boundary point with coordinate $(u,v)=(0,0)$}
\begin{proposition}
For any $m,n\in \mathbb{N}$, we have
$$|(u\partial_u)^m(v\partial_v)^n r^2(u,v)|\lesssim {\color{black}r^2}.$$
\end{proposition}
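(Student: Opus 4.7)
The plan is to reduce the claim to the unweighted estimate in the previous proposition via a standard algebraic identity combined with the linear asymptotic for $r^2$. First I would expand the weighted differential operator using the Stirling transform
\begin{equation*}
(u\partial_u)^m (v\partial_v)^n = \sum_{i=0}^m \sum_{j=0}^n S(m,i)\, S(n,j)\, u^i v^j\, \partial_u^i \partial_v^j,
\end{equation*}
where the $S(\cdot,\cdot)$ are absolute combinatorial constants (Stirling numbers of the second kind). This identity is readily proved by induction on $m,n$ from the elementary computation $(u\partial_u)(u^k \partial_u^k) = k u^k \partial_u^k + u^{k+1}\partial_u^{k+1}$. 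Applied to $r^2$, it expresses $(u\partial_u)^m (v\partial_v)^n r^2$ as a finite sum of terms of the form $u^i v^j \partial_u^i \partial_v^j r^2$ with $0 \leq i \leq m$ and $0 \leq j \leq n$.

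Next I would bound each such term by $r^2$. The previous proposition supplies the derivative estimate $|\partial_u^i \partial_v^j r^2| \lesssim r^{2-2i-2j}$, so it remains to show $|u^i v^j| \lesssim r^{2i+2j}$. This is where I would invoke the asymptotic expansion
\begin{equation*}
r^2(u,v) = [-2C_1 + o(1)]u + [-2C_2 + o(1)]v
\end{equation*}
recorded in \eqref{asympupperbdofrsq} within the proof of Proposition \ref{improved ru rv}. Because $u, v \leq 0$ and $C_1, C_2 > 0$ in $D_0$, both summands on the right-hand side are non-negative, and each is therefore individually bounded above by $r^2$. This gives the two coordinate bounds $|u| \lesssim r^2$ and $|v| \lesssim r^2$, whence $|u^i v^j| \lesssim r^{2i+2j}$. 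Combined with the unweighted derivative estimate, every term in the Stirling expansion is controlled by $r^{2i+2j} \cdot r^{2-2i-2j} = r^2$, and summing the finitely many contributions yields $|(u\partial_u)^m (v\partial_v)^n r^2| \lesssim r^2$ as claimed.

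I do not anticipate a genuine obstacle: the proposition is essentially an algebraic corollary, once the near-linearity of $r^2$ in $(u,v)$ and the derivative bounds from the previous proposition are in hand. The case $i = j = 0$ is the harmless base case $|r^2| \lesssim r^2$; for all other index pairs the accounting of powers is exact, since each weighted factor $u\partial_u$ (resp.\ $v\partial_v$) spends precisely the $r^2$ of size of $|u|$ (resp.\ $|v|$) to compensate the two powers of $1/r$ produced by an unweighted derivative on $r^2$. The only conceptual point worth stressing in the write-up is this exact matching, which is what makes the weighted derivatives bounded uniformly in $(m,n)$ by a single power $r^2$ rather than a quantity depending on the order.
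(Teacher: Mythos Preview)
Your proposal is correct and follows precisely the route the paper has in mind: the paper merely writes ``Recall in $D_0$, we have $r^2(u,v)=[-2C_1+o(1)]u+[-2C_2+o(1)]v$. This further implies\ldots'' and states the proposition without elaboration, so your Stirling-number expansion together with the bounds $|u|,|v|\lesssim r^2$ and the preceding derivative estimate $|\partial_u^i\partial_v^j(r^2)|\lesssim r^{2-2i-2j}$ simply supplies the details the authors left implicit.
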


{\color{black}

\section{Estimates of Kretschmann scalar}\label{Kretschmann}
By direct calculation, for Christoffel {\color{black}symbols} of metric (\ref{metric}) we have
$$\Gamma^u_{uu}=\f{2\partial_u \O}{\O}, \quad \Gamma^u_{\theta \theta}=\f{2r\partial_v r}{\O^2}, \quad\Gamma^u_{\phi \phi}=\f{2r \sin^2\theta\cdot \partial_v r}{\O^2},$$
$$\Gamma^v_{vv}=\f{2\partial_v \O}{\O},\quad \Gamma^v_{\theta \theta}=\f{2r\partial_u r}{\O^2}, \quad \Gamma^v_{\phi \phi}=\f{2r \sin^2\theta\cdot \partial_u r}{\O^2},$$
$$\Gamma^{\theta}_{u\theta}=\f{\partial_u r}{r},\quad \Gamma^{\theta}_{v\theta}=\f{\partial_v r}{r}, \quad \Gamma^{\theta}_{\phi \phi}=-\sin \theta\cdot \cos\theta,$$
$$\Gamma^{\phi}_{u\phi}=\f{\partial_u r}{r}, \quad \Gamma^{\phi}_{v\phi}=\f{\partial_v r}{r}, \quad \Gamma^{\phi}_{\theta \phi}=\f{\cos \theta}{\sin \theta.}$$
We then write down the expression for {\color{black}the} Kretschmann scalar:
\begin{equation}\label{Kretschmann2}
\begin{split}
&R^{\alpha\beta\rho\sigma}R_{\alpha\beta\rho\sigma}\\
=&\f{4}{r^4 \O^8}\bigg(16\cdot(\f{\partial^2 r}{\partial u \partial v})^2\cdot r^2\cdot\O^4+16\cdot \f{\partial^2 r}{\partial u^2}\cdot\f{\partial^2 r}{\partial v^2} \cdot r^2\cdot\O^4 \bigg)\\
&+\f{4}{r^4 \O^8}\bigg(-32\cdot \f{\partial^2 r}{\partial u^2}\cdot\partial_v r \cdot r^2 \cdot \O^3\cdot \partial_v \O -32\cdot\f{\partial^2 r}{\partial v^2}\cdot r^2\cdot\partial_u r \cdot \O^3\cdot \partial_u \O \bigg)\\
&+\f{4}{r^4 \O^8}\bigg( 16\cdot (\partial_v r)^2\cdot (\partial_u r)^2\cdot \O^4+64\cdot \partial_v r\cdot r^2\cdot \partial_u r\cdot \O^2\cdot \partial_u \O\cdot \partial_v\O+8\cdot\partial_v r\cdot \partial_u r\cdot \O^6\bigg)\\
&+\f{4}{r^4 \O^8}\bigg(  16\cdot r^4\cdot (\f{\partial^2 \O}{\partial v \partial u})^2\cdot \O^2-32\cdot r^4\cdot \f{\partial^2 \O}{\partial v \partial u}\cdot \O \cdot \partial_v \O \cdot \partial_u \O \bigg)\\
&+\f{4}{r^4 \O^8}\bigg( 16\cdot r^4\cdot (\partial_v \O)^2\cdot (\partial_u \O)^2+\O^8 \bigg)\\
=&\f{4}{r^4 \O^8}\bigg(16\cdot(\f{\partial^2 r}{\partial u \partial v})^2\cdot r^2\cdot\O^4+16\cdot \f{\partial^2 r}{\partial u^2}\cdot\f{\partial^2 r}{\partial v^2} \cdot r^2\cdot\O^4 \bigg)\\
&+\f{4}{r^4 \O^8}\bigg(-32\cdot \f{\partial^2 r}{\partial u^2}\cdot\partial_v r \cdot r^2 \cdot \O^4\cdot \partial_v \log\O -32\cdot\f{\partial^2 r}{\partial v^2}\cdot r^2\cdot\partial_u r \cdot \O^4\cdot \partial_u \log\O \bigg)\\
&+\f{4}{r^4 \O^8}\bigg( 16\cdot (\partial_v r)^2\cdot (\partial_u r)^2\cdot \O^4+64\cdot \partial_v r\cdot r^2\cdot \partial_u r\cdot \O^4\cdot \partial_u \log\O\cdot \partial_v\log\O+8\cdot\partial_v r\cdot \partial_u r\cdot \O^6\bigg)\\
&+\f{4}{r^4 \O^8}\bigg(16\cdot r^4\cdot (\f{\partial^2 \O}{\partial v \partial u})^2\cdot \O^2-32\cdot r^4\cdot \f{\partial^2 \O}{\partial v \partial u}\cdot \O^3 \cdot \partial_v \log\O \cdot \partial_u \log\O \bigg)\\
&+\f{4}{r^4 \O^8}\bigg( 16\cdot r^4\cdot \O^4\cdot (\partial_v \log\O)^2\cdot (\partial_u \log\O)^2+\O^8 \bigg).\\
\end{split}
\end{equation}
By Proposition \ref{Prop 4.1} and Proposition \ref{2ndorderr}, we obtain polynomial upper bounds for $|\partial_u r|, |\partial_v r|, |\partial_u \partial_v r|, |\partial_u \partial_u r|, |\partial_v \partial_v r|$. Through {\color{black}(\ref{Omega upper})} and Proposition \ref{1stderivofOmega}, we bound $|\O|, |\partial_u \log\O|, |\partial_v \log\O|$.

\noindent To control $\partial_v \partial_u \O$, we use
$$\partial_v\partial_u (\O^2)=2\partial_v (\O\cdot\partial_u \O)=2\O\cdot\partial_v\partial_u\O+2\partial_v\O\cdot\partial_u\O,$$
which implies
\begin{equation}\label{Omega 1}
\O\cdot\partial_v \partial_u \O=\f12\cdot\partial_v\partial_u (\O^2)-\partial_v \O\cdot\partial_u \O=\f12\cdot\partial_v\partial_u (\O^2)-\O^2\cdot\partial_v \log \O\cdot\partial_u \log \O.
\end{equation}
For $\partial_v \partial_u (\O^2)$, we have
$$\f{\partial_v\partial_u{\color{black}(\O^2)}}{\O^2}-\f{\partial_u {\color{black}(\O^2)}\cdot \partial_v \log{\color{black}(\O^2)}}{\O^2}=\partial_v \f{\partial_u \O^2}{\O^2}=\partial_v\partial_u \log{\color{black}(\O^2)},$$
which gives
\begin{equation}\label{Omega 2}
\f12\cdot{\partial_v\partial_u{\color{black}(\O^2)}}-\f12\cdot{\partial_u {\color{black}(\O^2)}\cdot \partial_v \log{\color{black}(\O^2)}}=\f12\cdot\O^2\cdot\partial_v\partial_u \log{\color{black}(\O^2)}.
\end{equation}
{\color{black}Combining (\ref{Omega 1}) and (\ref{Omega 2})}, we have
\begin{equation}\label{Omega 3}
\O\cdot\partial_v \partial_u \O=\f12\cdot \partial_u (\O^2)\cdot \partial_v \log(\O^2)-\O^2\cdot\partial_v \log\O\cdot \partial_u \log\O+\f12\cdot \O^2\cdot\partial_v\partial_u \log{\color{black}(\O^2)}.
\end{equation}
With estimates derived in (\ref{Omega upper}), {\color{black}Propositions \ref{1stderivofOmega} and \ref{2ndderivofOmega}},
we hence obtain polynomial upper bound for $|\O\cdot\partial_v\partial_u\O|$. \\

\noindent The last step is to derive upper bound for $1/\O^2$. This is equivalent to deriving lower bound for $\O^{2}$.  Here we appeal to Theorem \ref{prop1.2}. From (\ref{eqn Omega}){\color{black}:}
\begin{equation*}
r^2\partial_u \partial_v \log \O=\partial_u r \partial_v r+\f14\O^2-r^2 \partial_u \phi \partial_v \phi,
\end{equation*}
we have
\begin{equation*}
\begin{split}
\log \Omega(U_0, V_0){\color{black} =}&{\color{black}\log\Omega(U_0,V')+\log\Omega(U',V_0)-\log\Omega(U',V')}\\
+&\int_{U'}^{U_0}\int_{V'}^{V_0}\bigg(\f{\partial_u r \partial_v r}{r^2}+\f{\Omega^2}{4r^2}-\partial_u \phi \partial_v \phi \bigg)(u,v)dudv.
\end{split}
\end{equation*}
{\color{black} With the bounds in Proposition \ref{Prop 4.1} for $\partial_u r, \partial_v r$, and the estimate in (\ref{Omega upper}) for $\O^2$, we have}
\begin{equation*}
\begin{split}
&|\log \Omega(U_0, V_0)|\\
\leq&{\color{black}|\log\Omega(U_0,V')|+|\log\Omega(U',V_0)|+|\log\Omega(U',V')|}+\int_{U'}^{U_0}\int_{V'}^{V_0}\f{\tilde{D}^2}{r^4}(u,v)dudv\\
\leq&\tilde{C}\log\f{1}{r}(U_0, V_0)\leq\log\f{1}{r^{\tilde{C}}}(U_0, V_0),
\end{split}
\end{equation*}
where $\tilde{D}$ and $\tilde{C}$ are some uniform positive constants depending on initial data. This implies
$$\log\Omega(U_0,V_0)\geq \log r^{\tilde{C}}(U_0, V_0),$$
and together with (\ref{Omega upper}) we have
$$ r^{2\tilde{C}}(U_0, V_0)   \leq \Omega^2(U_0, V_0)\leq  \f{D}{r(U_0, V_0)}, \quad \mbox{and}$$
$$\f{r(U_0, V_0)}{D}\leq \Omega^{-2}(U_0, V_0)\leq r^{-2\tilde{C}}(U_0, V_0).$$
Putting all these estimates together, we hence conclude that: \\

\noindent Theorem \ref{thm1.1}. For spacetime solutions to (\ref{ES}) under spherical symmetry, at any point {\color{black}$(u,v)\in\mathcal{T}$ and $(u,v)$ close to $\mathcal{B}$}, there exists a positive number $N$ (depending on {\color{black} the} initial data at an earlier time), such that
$$\f{1}{r(u,v)^6} \lesssim R^{\alpha\beta\gamma\delta}R_{\alpha\beta\gamma\delta}\lesssim \f{1}{r(u,v)^N}.$$

\begin{remark}
Integrating $\partial_u \phi$ respect to $u$, using Proposition \ref{Prop 4.1}, we also conclude $|\phi|\ls |\log r|$ {\color{black}uniformly in the region $\{u \leq U_0, v \leq V_0\}$ close enough to $P$}.

\end{remark}

}

\section{A Case Study: {\color{black}Perturbations} of {\color{black}the} Schwarzschild Solution}\label{A Case Study}

\subsection{Initial Data}

Denote $o_0(1)$ to be a small {\color{black}number depending} only on initial data.

\begin{minipage}[!t]{0.55\textwidth}
\begin{tikzpicture}[scale=0.55]
\draw [white](-3, 0)-- node[midway, sloped, above,black]{$r(u,v)=0$}(3, 0);
\draw [white](-4, -1)-- node[midway, sloped, above,black]{$u=U$}(-3, 0);
\draw [white](3, 0)-- node[midway, sloped, above,black]{$v=V$}(4, -1);
\draw [thick] (-3,0) to [out=10,in=-170] (0,0);
\draw [thick] (0,0) to [out=10,in=-170] (3,0);
\draw [thick] (-6,-3)--(-3,0);
\draw [thick] (3,0)--(6,-3);
\draw [white](-5, -2.8)-- node[midway, sloped, above,black]{$r=\f{1}{2^{l_0}}$}(5, -2.8);
\draw [thick] (-6, -3) to [out=10,in=-170] (6,-3);
\end{tikzpicture}
\end{minipage}
\begin{minipage}[!t]{0.4\textwidth}
We consider the {\color{black}open}trapezoid region $T_0$ below.
For initial data along $r=1/2^{l_0}$, we prescribe
\end{minipage}
\hspace{0.05\textwidth}

$$\partial_v r+\f{M}{r}=o_0(1)\cdot \f{M}{r}, \quad \quad \partial_u r+\f{M}{r}=o_0(1)\cdot \f{M}{r},$$
$$\O^2=\f{2M}{r}+o_0(1)\cdot \f{M}{r},$$
$$|\partial_u \phi|\leq o_0(1)\cdot \f{1}{r^2}, \quad \quad |\partial_v \phi|\leq o_0(1)\cdot \f{1}{r^2}.$$

\subsection{Bootstrap Assumptions}
Fix {\color{black}a} positive parameter $M$. We choose the following bootstrap assumptions
\begin{equation}\label{bootstrap rv}
\partial_v r+\f{M}{r}=o(1)\cdot \f{M}{r},
\end{equation}

\begin{equation}\label{bootstrap ru}
\partial_u r+\f{M}{r}=o(1)\cdot \f{M}{r},
\end{equation}

\begin{equation}\label{bootstrap Omega}
\O^2\leq \f{3M}{r},
\end{equation}

\subsection{{\color{black}Improving} the Estimates}
\subsubsection{Estimates for $\partial_u r$ and $\partial_v r$}

With (\ref{eqn r}), we have
$$\partial_u (r\partial_v r)=-\f14\O^2.$$
This gives
$$(r\partial_v r)(U,v)-(r\partial_v r)(U',v)=\int^U_{U'}-\f14\O^2(u,v) du,$$
and
\begin{equation*}
\begin{split}
|(r\partial_v r)(U,v)-(r\partial_v r)(U',v)|\leq& \int^U_{U'}\f14\O^2(u,v) du\leq \int^{r(U,v)}_{r(U',v)}\f14\f{\O^2}{\partial_u r}(u,v)dr\\
\lesssim&r(U', v)-r(U,v).
\end{split}
\end{equation*}
Together with $(r\partial_v r)(U',v)=-M+o_0(1)$, we have
$$|(r\partial_v r)(U,v)+M|\leq r(U', v)-r(U,v)+o_0(1).$$
Pick $r(U',v)$ and $o_0(1)$ sufficient small, we have
$$|(r\partial_v r)(U,v)+M|\leq r(U', v)-r(U,v)+o_0(1){\color{black}\leq 2^{-l_0}+o_0(1)} <\f12 o(1).$$
This improves (\ref{bootstrap rv}). Similarly, we also could improve (\ref{bootstrap ru}).

\subsubsection{Estimates for $\Omega^2$}\label{Omega^2 1st}
With (\ref{eqn u}) we have
$$\partial_u[\f{\O^2}{-4\partial_u r}]=[\f{\O^2}{-4\partial_u r}]\cdot \f{r(\partial_u \phi)^2}{\partial_u r}.$$
This implies
$$\f{\O^2}{-4\partial_u r}(U,v)=\f{\O^2}{-4\partial_u r}(U',v)\cdot \exp[\int^U_{U'}\f{r(\partial_u \phi)^2}{\partial_u r}(u,v)du].$$
Hence,
$$\f{\O^2}{-4\partial_u r}(U,v)\leq \f{\O^2}{-4\partial_u r}(U',v).$$
With {\color{black}the information of the} initial data and estimates for $\partial_u r$, we have derived
$$\O^2 (U,v)\leq \f{2.5M}{r}.$$
This improves (\ref{bootstrap Omega}).

\subsubsection{Estimates for $\partial_u\phi$ and $\partial_v \phi$}.
We consider the spacetime region (in $T_0$) below. Let $1\ll l\ll n$.

\begin{minipage}[!t]{0.4\textwidth}
\begin{tikzpicture}[scale=0.7]
\draw [white](4, 1)-- node[midway, sloped, above,black]{$r(u,v)=0$}(3, 1);
\draw [white](-2, 1)-- node[midway, sloped, above,black]{$P$}(2, 1);
\draw [white](-2, 0.1)-- node[midway, sloped, above,black]{$P_n$}(2, 0.1);
\draw [white](-2.51, -3)-- node[midway, sloped, below,black]{$O_{l+1}$}(-2.5, -3);
\draw [white](-6, -5.2)-- node[midway, sloped, below,black]{$O_l$}(-4, -5.2);
\draw [white](-11, -10.5)-- node[midway, sloped, below,black]{$O_{l-1}$}(-9, -10.5);
\draw [white](-1.26, -1.63)-- node[midway, sloped, below,black]{$O_{l+2}$}(-1.25, -1.63);
\draw [white](2.51, -3)-- node[midway, sloped, below,black]{$Q_{l+1}$}(2.5, -3);
\draw [white](6, -5.2)-- node[midway, sloped, below,black]{$Q_l$}(4, -5.2);
\draw [white](11, -10.5)-- node[midway, sloped, below,black]{$Q_{l-1}$}(9, -10.5);
\draw [white](1.28, -1.57)-- node[midway, sloped, below,black]{$Q_{l+2}$}(1.25, -1.57);
\draw [white](4, 0)-- node[midway, sloped, above,black]{$r(u,v)=1/2^n$}(3, 0);

\draw [white](-5, -5)-- node[midway, sloped, above, black]{$u=U_0=0$}(-10,-10);
\draw [white](5, -5)-- node[midway, sloped, above, black]{$v=V_0=0$}(10,-10);
\draw[thick] (0,0)--(-10,-10);
\draw[thick] (0,0)--(10, -10);
\draw [thick] (-3,0) to [out=-10,in=170] (0,0);
\draw [thick] (0,0) to [out=-10,in=170] (3,0);
\draw [thick] (-3,1) to [out=-10,in=170] (0,1);
\draw [thick] (0,1) to [out=-10,in=170] (3,1);
\draw [white](-10, -10)-- node[midway, sloped, below,black]{$r=\f{1}{2^{l_0}}$}(10, -10);
\draw [white](-5, -5)-- node[midway, sloped, below,black]{$r=\f{1}{2^{l}}$}(5, -5);
\draw [white](-2.5, -2.5)-- node[midway, sloped, below,black]{$r=\f{1}{2^{{\color{black}l+1}}}$}(2.5, -2.5);
\draw[thick] (-10, -10) to [out=-10, in=170] (10, -10);
\draw[thick] (-5, -5) to [out=-10, in=170] (5, -5);
\draw[thick](-2.5, -2.5) to [out=-10, in=170] (2.5, -2.5);
\draw[fill] (0,0) circle [radius=0.15];
\draw[fill] (0,1) circle [radius=0.15];
\draw[fill] (0,0) circle [radius=0.15];
\draw[fill] (-10,-10) circle [radius=0.15];
\draw[fill] (-5,-5) circle [radius=0.15];
\draw[fill] (-2.5,-2.5) circle [radius=0.15];
\draw[fill] (-1.25,-1.25) circle [radius=0.15];
\draw[fill] (-0.7,-0.7) circle [radius=0.15];
\draw[fill] (-0.3,-0.3) circle [radius=0.15];
\draw[fill] (10,-10) circle [radius=0.15];
\draw[fill] (5,-5) circle [radius=0.15];
\draw[fill] (2.5,-2.5) circle [radius=0.15];
\draw[fill] (1.25,-1.25) circle [radius=0.15];
\draw[fill] (0.7,-0.7) circle [radius=0.15];
\draw[fill] (0.3,-0.3) circle [radius=0.15];
\end{tikzpicture}
\end{minipage}
\hspace{0.05\textwidth}
\begin{minipage}[!t]{0.6\textwidth}
\end{minipage}

\noindent For different constant {\color{black}$r$-level} sets $\{L_r\}$. Let
$$\Psi(r)=\max\{\sup_{P\in L_r} |C_2\cdot r\partial_u \phi|(P), \sup_{Q\in L_r} |C_1\cdot r\partial_v \phi|(Q)\}.$$
At $P$, we have
$$-r\partial_u r(P)=C_1>0, \quad -r\partial_v r(P)=C_2>0.$$
By (\ref{bootstrap ru}) and (\ref{bootstrap rv}), we have
$$|C_1-M|\leq o(1), \quad \quad |C_2-M|\leq o(1).$$
Then from
$${\color{black} \partial_{u}(r\partial_v \phi)=-r_v \partial_u\phi}$$
we have
\begin{equation}
\begin{split}
|C_1\cdot r\partial_v \phi|(P_n)\leq& o_0(1)+ \int_{u(Q_{l-1})}^{u(P_n)} -r_v |C_1\cdot\partial_u\phi| \, du\\
\leq& o_0(1)+ \int_{u(Q_{l-1})}^{u(P_n)} -r_u\cdot \f{r \partial_v r}{r \partial_u r} |C_1\cdot\partial_u\phi| \, du\\
=& o_0(1)+\int_{r(Q_{l-1})}^{r(P_n)} -\f{r \partial_v r}{r\partial_u r}\cdot\f{C_1}{C_2}\cdot\f{1}{r}\cdot |C_2\cdot r\partial_u\phi| \, dr\\
=& o_0(1)+\int_{r(Q_{l-1})}^{r(P_n)} -\f{1+O(r^{\f{1}{100}})}{r}\cdot |C_2\cdot r\partial_u\phi| \, dr \quad \quad (\mbox{use Proposition }\ref{improved ru rv})
\end{split}
\end{equation}
Similarly, we have
$$|C_2\cdot r\partial_u \phi|(P_n)\leq o_0(1)+\int_{r(O_{l-1})}^{r(P_n)} -\f{1+O(r^{\f{1}{100}})}{r}\cdot |C_1\cdot r\partial_v\phi| \, dr.$$
Combining these two inequality together, we have
$$\Psi(2^{-n})\leq o_0(1)+\int_{r=2^{-l+1}}^{r=2^{-n}} -\f{1+O(r^{\f{1}{100}})}{r}\cdot \Psi(r) \, dr.$$
Here $2^{-n}$ could be replaced by any small positive number. Hence it is true that for any small enough $\tilde{r}>0$
$$\Psi(\tilde{r})\leq o_0(1)+\int_{2^{-l+1}}^{\tilde{r}} -\f{1+O(r^{\f{1}{100}})}{r}\cdot \Psi(r) \, dr=o_0(1)+\int^{2^{-l+1}}_{\tilde{r}} \f{1+O(r^{\f{1}{100}})}{r}\cdot \Psi(r) \, dr$$
{\color{black}By} Gr\"onwall's inequality, we have
$$\Psi(\tilde{r})\leq o_0(1)\times e^{\int_{\tilde{r}}^{2^{-l+1}} \f{1+O(r^{\f{1}{100}})}{r}\cdot} dr=o_0(1)\times e^{-\ln \tilde{r}+O(1)}\leq\f{o_0(1)}{\tilde{r}}.$$
This gives
$$\tilde{r}\Psi(\tilde{r})\leq o_{{\color{black}0}}(1) \mbox{ for any } \tilde{r}>0,$$
which further implies
$$r^2|\partial_u \phi|\leq o_{{\color{black}0}}(1), \quad r^2|\partial_v \phi|\leq o_{{\color{black}0}}(1).$$

\subsection{Upper Bounds for Kretschmann Scalar}

We now start to derive an upper bound for Kretschmann scalar. Recall from (\ref{Kretschmann2}) in {\color{black}Section} \ref{Kretschmann}, we have

\begin{equation}\label{Kretschmanneq}
\begin{split}
&R^{\alpha\beta\rho\sigma}R_{\alpha\beta\rho\sigma}\\
=&\f{4}{r^4 \O^8}\bigg(16\cdot(\f{\partial^2 r}{\partial u \partial v})^2\cdot r^2\cdot\O^4+16\cdot \f{\partial^2 r}{\partial u^2}\cdot\f{\partial^2 r}{\partial v^2} \cdot r^2\cdot\O^4 \bigg)\\
&+\f{4}{r^4 \O^8}\bigg(-32\cdot \f{\partial^2 r}{\partial u^2}\cdot\partial_v r \cdot r^2 \cdot \O^3\cdot \partial_v \O -32\cdot\f{\partial^2 r}{\partial v^2}\cdot r^2\cdot\partial_u r \cdot \O^3\cdot \partial_u \O \bigg)\\
&+\f{4}{r^4 \O^8}\bigg( 16\cdot (\partial_v r)^2\cdot (\partial_u r)^2\cdot \O^4+64\cdot \partial_v r\cdot r^2\cdot \partial_u r\cdot \O^2\cdot \partial_u \O\cdot \partial_v\O+8\cdot\partial_v r\cdot \partial_u r\cdot \O^6\bigg)\\
&+\f{4}{r^4 \O^8}\bigg(  16\cdot r^4\cdot (\f{\partial^2 \O}{\partial v \partial u})^2\cdot \O^2-32\cdot r^4\cdot \f{\partial^2 \O}{\partial v \partial u}\cdot \O \cdot \partial_v \O \cdot \partial_u \O \bigg)\\
&+\f{4}{r^4 \O^8}\bigg( 16\cdot r^4\cdot (\partial_v \O)^2\cdot (\partial_u \O)^2+\O^8 \bigg).
\end{split}
\end{equation}

Thus, to obtain an upper bound for $R^{\alpha\beta\rho\sigma}R_{\alpha\beta\rho\sigma}$, it is crucial to derive an upper bound for $\Omega^{-2}$.

\begin{proposition}
Give the prescribed initial data, in the diamond region, we have
$$\Omega^{-1}(U,v)\leq {C}\cdot{r(U,v)^{\f12-o(1)^2}},$$
for some positive constant $C$.
\end{proposition}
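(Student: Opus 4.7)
The plan is to bypass any direct wave-equation analysis of $\log\Omega$ --- which, as outlined in the New Ingredients discussion, recovers only an $O(o(1))$ correction in the exponent --- and instead to argue through the Hawking mass identity combined with the monotonicity of $m$. The whole argument hinges on the \emph{sharp} scalar-field bound $r^2|\partial_u\phi|\leq o_0(1)$ obtained in the preceding subsection: squaring it yields the decisive quadratic factor $o_0(1)^2$ in the $m$-growth equation, which is precisely what shows up as $o(1)^2$ in the final exponent of $r$.

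First I would rewrite $\Omega^{-2}$ via the Hawking mass. In the trapped region, $1-2m/r = -4\Omega^{-2}\partial_u r\partial_v r$ rearranges to
\[\Omega^{-2}(U,v)\;=\;\frac{(2m-r)\,r}{4\,|\partial_u r|\,|\partial_v r|}(U,v).\]
The improved estimates for $\partial_u r$ and $\partial_v r$ derived above give $|\partial_u r|\,|\partial_v r|=M^2(1+o(1))/r^2$, and $2m-r\leq 2m$, hence
\[\Omega^{-2}(U,v)\;\lesssim\;\frac{m(U,v)\,r(U,v)}{M^2}.\]
It is therefore enough to prove $m(U,v)\lesssim r(U,v)^{-\eta}$ for some very small $\eta=O(o_0(1)^2)$.

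Next I would bound the growth of $m$ using (\ref{m u}), $2\partial_u r\,\partial_u m=(1-\mu)r^2(\partial_u\phi)^2$. Since $\partial_u r<0$ and $1-\mu<0$ in $\mathcal{T}$, this already yields $\partial_u m\geq 0$. Taking absolute values and using $2m/r-1\leq 2m/r$, $|\partial_u r|^{-1}\leq r(1+o(1))/M$, and $|\partial_u\phi|^2\leq o_0(1)^2/r^4$,
\[\partial_u\log m\;\leq\;\frac{2\,o_0(1)^2}{M\,r^2}\,(1+o(1)).\]
Converting the $u$-integral along constant $v$ into an $r$-integral through $du=-r\,dr/(M(1+o(1)))$ and integrating from the initial hypersurface $\{r=2^{-l_0}\}$ down to $r(U,v)$,
\[\log m(U,v)\;\leq\;\log m_0\;+\;\frac{3\,o_0(1)^2}{M^2}\log\!\bigl(r_0/r(U,v)\bigr),\]
so that $m(U,v)\leq C_m\,r(U,v)^{-3o_0(1)^2/M^2}$, with $C_m$ depending only on initial data.

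Inserting this bound on $m$ into the first step yields $\Omega^{-2}(U,v)\leq C\,r(U,v)^{1-3o_0(1)^2/M^2}$, and extracting square roots gives the claimed estimate $\Omega^{-1}(U,v)\leq C'\,r(U,v)^{1/2-o(1)^2}$, with the small positive parameter $o(1)^2 = 3o_0(1)^2/(2M^2)$ determined by the initial data. The main obstacle is precisely that the exponent deficit must be \emph{quadratic} in the initial perturbation: if one used only the weaker a priori bound $|\partial_u\phi|\lesssim 1/r^{2+\alpha}$ from the general theorem, the exponent of $r$ would improve by only $O(o(1))$ rather than $O(o(1)^2)$. The sharp $r^{-2}$ blow-up rate for $\partial_u\phi$, once squared inside the monotonicity identity for $m$, is exactly what furnishes the $o_0(1)^2$ in the final exponent and thereby confirms the quadratic-in-$o_0(1)$ convergence rate to the exact Schwarzschild value $1/2$ as the perturbation shrinks.
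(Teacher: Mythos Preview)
Your argument is correct, and it reaches the same conclusion as the paper, but the organisation is different. The paper works directly with the Raychaudhuri constraint \eqref{eqn u}, rewriting it as the exact ODE
\[
\partial_u\!\left[\frac{\Omega^2}{-4\partial_u r}\right]=\left[\frac{\Omega^2}{-4\partial_u r}\right]\cdot\frac{r(\partial_u\phi)^2}{\partial_u r},
\]
and then integrating to obtain a lower bound on $\Omega^2/(-4\partial_u r)$; the sharp bound $|\partial_u\phi|\leq o_0(1)/r^2$ feeds straight into the exponent. You instead pass through the Hawking mass: the identity $1-2m/r=-4\Omega^{-2}\partial_u r\,\partial_v r$ reduces the problem to bounding $m$, and then \eqref{m u} --- which is the Hawking-mass reformulation of \eqref{eqn u} --- gives $\partial_u\log m\leq r(\partial_u\phi)^2/|\partial_u r|$. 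The integrand you estimate is literally the same as the paper's, so the two proofs are close cousins; yours has the pleasant byproduct of an explicit polynomial bound on $m$, while the paper's is marginally more direct since it never names $m$.

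One algebraic slip: your displayed rearrangement $\Omega^{-2}=\dfrac{(2m-r)\,r}{4|\partial_u r||\partial_v r|}$ has the factor of $r$ on the wrong side --- the correct identity is $\Omega^{-2}=\dfrac{2m-r}{4r|\partial_u r||\partial_v r|}$. Your next line, $\Omega^{-2}\lesssim m\,r/M^2$, is nevertheless right (it follows from the correct formula after substituting $|\partial_u r||\partial_v r|\approx M^2/r^2$), so the slip is cosmetic and the rest of the argument goes through unchanged.
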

\begin{proof}
We revisit subsection \ref{Omega^2 1st}. From (\ref{eqn u}) we have
$$\partial_u[\f{\O^2}{-4\partial_u r}]=[\f{\O^2}{-4\partial_u r}]\cdot \f{r(\partial_u \phi)^2}{\partial_u r}.$$
This implies
\begin{equation}\label{Omega lower bound}
\f{\O^2}{-4\partial_u r}(U,v)=\f{\O^2}{-4\partial_u r}(U',v)\cdot \exp[\int^U_{U'}\f{r(\partial_u \phi)^2}{\partial_u r}(u,v)du].
\end{equation}
Since
$$|\partial_u \phi|\leq o_{{\color{black}0}}(1)/r^2, \quad \mbox{and} \quad \partial_u r+\f{M}{r}=o(1)\cdot \f{M}{r},$$
we have
\begin{equation*}
\begin{split}
\int^U_{U'}\f{r(\partial_u \phi)^2}{\partial_u r}(u,v)du \geq& -o_{{\color{black}0}}(1)^2\cdot\int_{U'}^U \f{1}{Mr^2}(u,v)du\geq -o_{{\color{black}0}}(1)^2\cdot \int_{r(U,v)}^{r(U',v)}\f{1}{M^2 r}(u,v)d r(u,v)\\
\geq& o_{{\color{black}0}}(1)^2\cdot \log r(U,v)\geq \log r(U,v)^{o_{{\color{black}0}}(1)^2}.
\end{split}
\end{equation*}
Using (\ref{Omega lower bound}), we have
\begin{equation*}
\begin{split}
\f{\O^2}{-4\partial_u r}(U,v)=&\f{\O^2}{-4\partial_u r}(U',v)\cdot \exp[\int^U_{U'}\f{r(\partial_u \phi)^2}{\partial_u r}(u,v)du]\\
\geq& \f{\O^2}{-4\partial_u r}(U',v)\cdot r(U,v)^{o_{{\color{black}0}}(1)^2}\\
\geq& \f12\cdot [1+o(1)]\cdot r(U,v)^{o_{{\color{black}0}}(1)^2}.
\end{split}
\end{equation*}
This gives
$$\Omega^2(U,v)\geq -4\partial_u r (U,v)\cdot  \f12\cdot [1+o(1)]\cdot r(U,v)^{o(1)^2} \leq 2M \cdot [1+o(1)]\cdot r(U,v)^{o_{{\color{black}0}}(1)^2-1}.$$
Therefore, we obtain
$$\Omega^{-2}(U,v)\leq \f{1+o(1)}{2M}\cdot {r(U,v)^{1-o_{{\color{black}0}}(1)^2}},$$
and
$$\Omega^{-1}(U,v)\leq {C}\cdot{r(U,v)^{\f12-o_{{\color{black}0}}(1)^2}},$$
for some positive constant $C$.
\end{proof}

We now move on to prove the main theorem of this section.
\begin{theorem}\label{close to Schwarzschild} We consider the {\color{black}open} trapezoid region $T_0$ lying in $\mathcal{T}$ below. 

\begin{minipage}[!t]{0.55\textwidth}
\begin{tikzpicture}[scale=0.55]
\draw [white](-3, 0)-- node[midway, sloped, above,black]{$r(u,v)=0$}(3, 0);
\draw [white](-4, -1)-- node[midway, sloped, above,black]{$u=U$}(-3, 0);
\draw [white](3, 0)-- node[midway, sloped, above,black]{$v=V$}(4, -1);
\draw [thick] (-3,0) to [out=10,in=-170] (0,0);
\draw [thick] (0,0) to [out=10,in=-170] (3,0);
\draw [thick] (-6,-3)--(-3,0);
\draw [thick] (3,0)--(6,-3);
\draw [white](-5, -2.8)-- node[midway, sloped, above,black]{$r=\f{1}{2^{l_0}}$}(5, -2.8);
\draw [thick] (-6, -3) to [out=10,in=-170] (6,-3);
\end{tikzpicture}
\end{minipage}
\begin{minipage}[!t]{0.4\textwidth}
For $l_0$ being a large positive constant, we prescribe initial data along $r=1/2^{l_0}$: requiring
\end{minipage}
\hspace{0.05\textwidth}

$$|\partial_v r+\f{M}{r}|\leq o_0(1)\cdot \f{M}{r}, \quad \quad |\partial_u r+\f{M}{r}|\leq o_0(1)\cdot \f{M}{r},$$
$$|\O^2-\f{2M}{r}|\leq o_0(1)\cdot \f{M}{r},$$
$$|\partial_u \phi|\leq o_0(1)\cdot \f{1}{r^2}, \quad \quad |\partial_v \phi|\leq o_0(1)\cdot \f{1}{r^2},$$
where $o_0(1)$ is a small positive number depending on initial data. Then for the dynamical spacetime solutions of (\ref{ES}) under spherical symmetry, under the prescribed initial data, in the region above, we have
\begin{equation}
|R^{\alpha\beta\rho\sigma}R_{\alpha\beta\rho\sigma}|\lesssim \f{1}{r^{6+o_{{\color{black}0}}(1)^2}}.
\end{equation}
\end{theorem}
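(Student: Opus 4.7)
All the dynamical information needed has, in fact, already been assembled in the preceding three subsections. From the bootstrap argument and the ``Improving the Estimates'' subsections we have, throughout $T_0$, the bounds
\begin{equation*}
\partial_u r + \tfrac{M}{r} = o(1)\cdot\tfrac{M}{r}, \quad \partial_v r + \tfrac{M}{r} = o(1)\cdot\tfrac{M}{r}, \quad \Omega^2 \leq \tfrac{3M}{r}, \quad |\partial_u\phi|,|\partial_v\phi| \leq \tfrac{o_0(1)}{r^2},
\end{equation*}
and the crucial lower bound for $\Omega^2$ just established, namely $\Omega^{-2}(U,v) \leq C \cdot r(U,v)^{1-o_0(1)^2}$. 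So the plan is simply to plug these into the formula \eqref{Kretschmanneq} term by term and check that every contribution is $\lesssim r^{-(6+o_0(1)^2)}$, after redefining the small parameter if necessary.

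To do so I first need pointwise polynomial bounds on the second-order derivatives appearing in \eqref{Kretschmanneq}. Following the same hierarchy used in Propositions \ref{1stderivofOmega}, \ref{2ndorderr}, \ref{2ndderivofOmega}, one integrates \eqref{eqn Omega} along $v$ against the fixed initial hypersurface to get $|\partial_u\log\Omega|,|\partial_v\log\Omega|\lesssim 1/r^2$; then \eqref{eqn r} gives $|\partial^2_{uv} r|\lesssim 1/r^3$, \eqref{eqn u} and \eqref{eqn v} (after expanding $\partial_u \Omega = \Omega\,\partial_u\log\Omega$) give $|\partial^2_{uu}r|,|\partial^2_{vv}r|\lesssim 1/r^3$, and \eqref{eqn Omega} directly gives $|\partial^2_{uv}\log\Omega|\lesssim 1/r^4$. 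Combining these with the identity $\partial^2_{uv}\Omega=\Omega(\partial_u\log\Omega\,\partial_v\log\Omega+\partial^2_{uv}\log\Omega)$ (and analogous ones) yields $|\Omega\,\partial^2_{uv}\Omega|\lesssim \Omega^2/r^4$ and $|\partial_u\Omega\,\partial_v\Omega|\lesssim \Omega^2/r^4$. These upper bounds are purely polynomial in $1/r$, independent of the $o_0(1)$-type constants.

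Next I examine \eqref{Kretschmanneq} term by term. The prefactor $4/(r^4\Omega^8)$ is, by the lower bound on $\Omega^2$, controlled by $r^{-4o_0(1)^2}$. For each of the bracketed terms carrying a factor $\Omega^4$ (namely the $r^2(\partial^2_{uv}r)^2\Omega^4$, $r^2\partial^2_{uu}r\,\partial^2_{vv}r\,\Omega^4$, $(\partial_v r)^2(\partial_u r)^2\Omega^4$, $\Omega^3\partial^2_{uu}r\,\partial_v r\,\partial_v\Omega\cdot r^2$, $\Omega^2 r^2\partial_u r\,\partial_v r\,\partial_u\Omega\,\partial_v\Omega$, $r^4\Omega^2(\partial^2_{uv}\Omega)^2$, $r^4\Omega\,\partial^2_{uv}\Omega\,\partial_u\Omega\,\partial_v\Omega$, $r^4(\partial_v\Omega)^2(\partial_u\Omega)^2$ terms), I expect the net contribution after multiplying by the prefactor to be $\lesssim \Omega^{-4}/r^4\cdot 1/r^2\lesssim r^{2-2o_0(1)^2}/r^6=r^{-(4+2o_0(1)^2)}/r^2 = r^{-(6+2o_0(1)^2)}$; the remaining terms (those with $\Omega^6$ and the lone $\Omega^8$ term) only contribute $\lesssim 1/r^{5+o_0(1)^2}$ and $1/r^4$ respectively, which are strictly smaller. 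Absorbing the factor $2$ in the exponent by redefining $o_0(1)$ then yields the claim $|R^{\alpha\beta\rho\sigma}R_{\alpha\beta\rho\sigma}|\lesssim r^{-(6+o_0(1)^2)}$.

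The main obstacle is the bookkeeping: one must verify that \emph{every} one of the ten or so terms in \eqref{Kretschmanneq} produces an exponent no worse than $6+O(o_0(1)^2)$, and, more subtly, that the only place where the non-trivial $o_0(1)^2$ correction can enter is through the bound $\Omega^{-2}\lesssim r^{1-o_0(1)^2}$. All other estimates are clean polynomial bounds in $1/r$, so this is indeed the sole source of the extra exponent, which confirms the sharp statement $6+o_0(1)^2\to 6$ as the initial perturbation tends to zero.
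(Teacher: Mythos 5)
Your proposal is correct and follows essentially the same route as the paper: the same derivative bounds $|\partial_u\log\Omega|,|\partial_v\log\Omega|\lesssim r^{-2}$, $|\partial^2 r|\lesssim r^{-3}$, $|\partial^2_{uv}\log\Omega|\lesssim r^{-4}$ extracted from the field equations, the same key input $\Omega^{-2}\lesssim r^{1-o_0(1)^2}$ as the sole source of the extra exponent, and the same term-by-term insertion into \eqref{Kretschmanneq}. The only cosmetic difference is that the paper first combines the fourth and fifth lines into the perfect square $16r^4\bigl(\Omega\,\partial^2_{uv}\Omega-\partial_u\Omega\,\partial_v\Omega\bigr)^2=16r^4\Omega^4(\partial_u\partial_v\log\Omega)^2$ before estimating, whereas you bound those terms individually via $|\Omega\,\partial^2_{uv}\Omega|,|\partial_u\Omega\,\partial_v\Omega|\lesssim \Omega^2/r^4$; both give the same $r^{-(6+O(o_0(1)^2))}$ after renaming the small constant.
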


\begin{proof}
Use (\ref{eqn Omega}), we have
$$|\f{\partial_v \O}{\O}|=|\partial_v \log \O|\leq \f{C}{r^2}.$$
This implies
$$|\partial_v \O|\leq\f{C}{r^2}\cdot |\O|\leq \f{C}{r^3}.$$
Similarly, we have
$$|\partial_u \O|\leq \f{C}{r^3}.$$
Since
$$\partial_u \partial_v \log \O=\partial_u (\f{\partial_v \O}{\O})=\f{\partial_u \partial_v \O}{\O}-\f{\partial_u\O\cdot \partial_v \O}{\O^2},$$
we have
$$\f{\partial^2 \O}{\partial v \partial u}\cdot \O-\partial_v \O\cdot\partial_u \O=\O^2 \partial_u \partial_v \log\O.  $$

We then bound the last two lines in (\ref{Kretschmanneq})
\begin{equation*}
\begin{split}
&|\f{4}{r^4 \O^8}\bigg(  16\cdot r^4\cdot (\f{\partial^2 \O}{\partial v \partial u})^2\cdot \O^2-32\cdot r^4\cdot \f{\partial^2 \O}{\partial v \partial u}\cdot \O \cdot \partial_v \O \cdot \partial_u \O \bigg)\\
&+\f{4}{r^4 \O^8}\bigg( 16\cdot r^4\cdot (\partial_v \O)^2\cdot (\partial_u \O)^2+\O^8 \bigg)|\\
=&\f{4}{r^4 \O^8}\cdot 16\cdot r^4\cdot \bigg(\f{\partial^2 \O}{\partial v \partial u}\cdot \O-\partial_v \O\cdot\partial_u \O\bigg)^2+\f{4}{r^4}\\
=&\f{64}{\O^4}\cdot(\partial_u \partial_v \log \O)^2+\f{4}{r^4}\\
\leq& \f{C}{r^8}\cdot r^{2-o_{{\color{black}0}}(1)^2}+\f{4}{r^4}\\
\leq& \f{C}{r^{6+o_{{\color{black}0}}(1)^2}}.
\end{split}
\end{equation*}

We control the third line in (\ref{Kretschmanneq})
\begin{equation*}
\begin{split}
&|\f{4}{r^4 \O^8}\bigg( 16\cdot (\partial_v r)^2\cdot (\partial_u r)^2\cdot \O^4+64\cdot \partial_v r\cdot r^2\cdot \partial_u r\cdot \O^2\cdot \partial_u \O\cdot \partial_v\O+8\cdot\partial_v r\cdot \partial_u r\cdot \O^6\bigg)|\\
\leq& |\f{64}{r^4 \O^4}\cdot (\partial_v r)^2\cdot (\partial_u r)^2+\f{256}{r^2\O^4}\cdot\partial_v r\cdot \partial_u r\cdot \partial_u \log\O\cdot \partial_v \log\O+\f{32}{r^4 \O^2}\cdot\partial_v r\cdot\partial_u r| \\
\leq&\f{C}{r^{6+o_{{\color{black}0}}(1)^2}}.
\end{split}
\end{equation*}

Finally, we bound the first two lines in (\ref{Kretschmanneq}).

Note that (\ref{eqn u}) is equivalent to
$$-\f{2}{\O^3}\cdot\partial_u \log \O\cdot \partial_u r+\f{\partial^2 r}{\partial u^2}=-r(\partial_u \phi)^2.$$
Hence,
$$\f{\partial^2 r}{\partial u^2}=-r(\partial_u \phi)^2+\f{2}{\O^3}\cdot\partial_u \log \O\cdot \partial_u r .$$
Similarly, we have
$$\f{\partial^2 r}{\partial v^2}=-r(\partial_v \phi)^2+\f{2}{\O^3}\cdot\partial_v \log \O\cdot \partial_v r .$$
From (\ref{eqn r}), we have
$$\partial_{u}\partial_v r=-\f{\partial_u r \partial_v r}{r}-\f{1}{4r}\O^2.$$
Using all the estimates derived above, we thus conclude
$$|\f{\partial^2 r}{\partial u^2}|\leq \f{C}{r^3}, \quad |\f{\partial^2 r}{\partial v^2}|\leq \f{C}{r^3}, \quad |\partial_u \partial_v r|\leq \f{C}{r^3}.$$

We have for the first line in (\ref{Kretschmanneq}):
\begin{equation*}
\begin{split}
&|\f{4}{r^4 \O^8}\bigg(16\cdot(\f{\partial^2 r}{\partial u \partial v})^2\cdot r^2\cdot\O^4+16\cdot \f{\partial^2 r}{\partial u^2}\cdot\f{\partial^2 r}{\partial v^2} \cdot r^2\cdot\O^4 \bigg)|\leq \f{C}{r^{6+o_{{\color{black}0}}(1)^2}}.
\end{split}
\end{equation*}

The second line in (\ref{Kretschmanneq}) obeys
\begin{equation*}
\begin{split}
&|\f{4}{r^4 \O^8}\bigg(-32\cdot \f{\partial^2 r}{\partial u^2}\cdot\partial_v r \cdot r^2 \cdot \O^3\cdot \partial_v \O -32\cdot\f{\partial^2 r}{\partial v^2}\cdot r^2\cdot\partial_u r \cdot \O^3\cdot \partial_u \O \bigg)|\\
=&|\f{4}{r^4 \O^8}\bigg(-32\cdot \f{\partial^2 r}{\partial u^2}\cdot\partial_v r \cdot r^2 \cdot \O^4\cdot \partial_v \log\O -32\cdot\f{\partial^2 r}{\partial v^2}\cdot r^2\cdot\partial_u r \cdot \O^4\cdot \partial_u \log \O \bigg)|\\
\leq& \f{C}{r^{6+o_{{\color{black}0}}(1)^2}}.
\end{split}
\end{equation*}

Therefore, we have proved the main theorem of this section.
\end{proof}

\end{document}